\documentclass[a4paper, reqno, 11pt]{amsart}
\makeatletter

\@addtoreset{equation}{section}
\makeatother
\usepackage{setspace}
\usepackage{xcolor}
\usepackage{braket}
\singlespacing
\usepackage{amsmath}
\usepackage{amssymb}
\usepackage{latexsym}
\usepackage{amsthm}
\usepackage{textcomp}
\usepackage{hyperref}
\usepackage{here}
\newtheorem{Thm}{Theorem}[section]

\newtheorem{Lem}[Thm]{Lemma}
\newtheorem{Prop}[Thm]{Proposition}

\newtheorem{Cor}[Thm]{Corollary}

\theoremstyle{definition}

\newtheorem{Ass}[Thm]{Assumption}
\newtheorem{Def}[Thm]{Definition}
\newtheorem{Rem}[Thm]{Remark}

\begin{document}

\title[Properties of power mean]{Properties of complex-valued power means of random variables and their applications}
\author[Y.Akaoka]{Yuichi Akaoka}
\thanks{A part of this paper consists of Y.~A.'s master's thesis \cite{Akaoka2020a}.}
\address{Department of Mathematics, Faculty of Science, Shinshu University}
\curraddr{Gunma bank}
\email{18ss101b@gmail.com}

\author[K. Okamura]{Kazuki Okamura}
\address{Department of Mathematics, Faculty of Science, Shizuoka University}
\email{okamura.kazuki@shizuoka.ac.jp}

\author[Y. Otobe]{Yoshiki Otobe}
\address{Department of Mathematics, Faculty of Science, Shinshu University}
\email{otobe@math.shinshu-u.ac.jp}

\subjclass[2000]{60F25, 60F15, 60F05, 26E60, 62F10,  62F12, 62E20}
\keywords{quasi-arithmetic mean; power mean; integrability; limit theorem; point estimation; Cauchy distribution}
\date{\today}
\dedicatory{}

\begin{abstract}
We consider power means of independent and identically distributed (i.i.d.) non-integrable random variables.   
The power mean is an example of a homogeneous quasi-arithmetic mean. 
Under certain conditions, several limit theorems hold for the power mean, similar to the case of the arithmetic mean of i.i.d.\ integrable random variables. 
Our feature is that the generators of the power means are allowed to be complex-valued, which enables us to consider the power mean of random variables supported on the whole set of real numbers. 
We establish integrabilities of the power mean of i.i.d.\ non-integrable random variables and a limit theorem for the variances of the power mean. 
We also consider the behavior of the power mean as the parameter of the power varies. 
The complex-valued power means are unbiased, strongly-consistent, robust estimators for the joint of the location and scale parameters of the Cauchy distribution.  
\end{abstract}

\maketitle

\section{Introduction}

It is important to consider heavy-tailed distributions, which appear in both theory and practice. 
However, they are not easy to handle since the law of large numbers fails for i.i.d.\ random variables with a non-integrable distribution such as the Cauchy distribution. 
For non-negative random variables, 
we can expect the law of large numbers for geometric and harmonic means of i.i.d.\ random variables by the arithmetic mean-geometric mean inequality and the geometric mean-harmonic mean inequality. 
Pakes \cite{Pakes1999} considered the the asymptotic behavior of the variances of the geometric and harmonic means of i.i.d.\ non-negative random variables. 

The geometric and harmonic means belong to a class of {\it quasi-arithmetic means} considered by Kolmogorov \cite{Kolmogorov1930} in his research on axioms of generalized means. 
A quasi-arithmetic mean has the form of $M^f_n (x_1, \dots, x_n) = f^{-1}(\frac{1}{n} \sum_{i=1}^{n} f(x_i))$ where $f$ is a function  called a generator. 
The arithmetic, geometric, and harmonic means are obtained by considering the cases that $f(x) = x$, $f(x) = \log x$, and $f(x) = 1/x$ respectively. 
Using the delta method in mathematical statistics, 
de Carvalho \cite{Carvalho2016} showed the central limit theorem for quasi-arithmetic means. 
Barczy and Burai \cite{Barczy2022} extended it to a more general framework called Bajraktarevi\'c means. 
The quasi-arithmetic mean is related to Fr\'echet means and fractional moments. 
However, there are few references which deal directly with quasi-arithmetic means of random variables. 

It is natural to consider heavy-tailed distributions {\it supported on $\mathbb{R}$} such as the Cauchy distribution.  
In some cases, especially when the sample contains outliers, modeling in a heavy-tailed distribution is more appropriate than modeling in the normal distribution. 
The results in \cite{Pakes1999, Carvalho2016, Barczy2022} are limited to non-negative random variables and are not applicable to this case. 
To solve this problem, 
the authors in \cite{Akaoka2021-1} introduced the notion of {\it complex-valued} quasi-arithmetic means of real numbers by allowing $f$ to take complex values. 
Then the authors established the asymptotic behavior of the variances of the geometric and harmonic means of random variables supported on $\mathbb{R}$, 
which are extensions of the results of \cite{Pakes1999}.  
Since we cannot adapt the arithmetic mean-geometric mean inequality or the geometric mean-harmonic mean inequality due to the extension of the domain to $\mathbb{R}$, 
it is difficult to investigate the integrability of $M^f_n$, which depends on the integrability of the random variable, the size of the sample $n$, and the generator $f$. 

This paper substantially develops the research in  \cite{Akaoka2021-1}.  
Our first contribution is to establish a more general result for the asymptotic behavior of the variances of $M^f_n$ of random variables. 
It is applicable to a large class of quasi-arithmetic means and contains the two main results of \cite[Theorems 2.1 and 3.1]{Akaoka2021-1} as corollaries. 
For the proof, we give a unified approach which differs from the ad hoc approaches taken in \cite{Akaoka2021-1}. 
Our results cannot be derived from the delta method and we need more delicate arguments than the derivation of the central limit theorem of $M^f_n$. 

Our second contribution is to study in detail the properties of $M^f_n$ of random variables in the case of {\it power means}, that is, $f(x) = x^p$ for $0 < |p| < 1$.   
In particular, we consider (i) the asymptotic behavior of the variances of $M^f_n$, (ii) the integrability of $M^f_n$, and, (iii) the behavior of $M^f_n$ as the power $p$ varies. 
We deal with both positive and negative powers. 
Negative power means interpolate between the harmonic and geometric means, and, positive power means interpolate between the geometric and arithmetic means. 
Our results are largely different depending on the sign of the power $p$. 
Perhaps contrary to our initial intuition, the case of negative powers is easier to deal with than that of positive powers. 
In the case of positive powers, $M^f_n$ is not integrable for some heavy-tailed distributions including the Cauchy distribution, and we consider some truncations of non-integrable terms.

All of our main results for quasi-arithmetic means are applicable to the Cauchy distribution, which is a canonical example of the heavy-tailed distribution. 
By our results, the negative power mean, as well as the geometric and harmonic means, work well as estimators of the joint of the location and scale parameters of the Cauchy distribution.  
The quasi-arithmetic mean $M^f_n$ has good integrability even when the sample size $n$ is small. 
The quasi-arithmetic means are $\sqrt{n}$-consistent, unbiased, strongly-consistent, robust estimators of the joint of the parameters, under the {\it complex parameterization} of Letac \cite{Letac1978} and  McCullagh \cite{McCullagh1996}. 

Our approach to the power mean leads to a novel estimator of the parameters of the {\it mixture} of the two Cauchy distributions with five unknown parameters. 
We give a strongly consistent and $\sqrt{n}$-consistent estimator in a closed form. 
Recently, Kalantan and Einbeck \cite{Kalantan2019} have considered this problem by using the EM algorithm, however, they focused on simulations and there are no mathematical guarantees. 

The rest of this paper is organized as follows. 
In Section \ref{sec:bkgd}, we give a more detailed background to this work.   
We discuss relationships with Fr\'echet means and fractional moments. 
In Section \ref{sec:asym-var}, we first give the notation used in this paper, and then state the first main result, which gives asymptotics for variances of $M^f_n$, in Theorem \ref{thm:gen-var-lim} and its proof. 
The following two sections are devoted to giving the second contribution. 
The cases of negative and positive powers are dealt with in Sections \ref{sec:neg} and \ref{sec:posi}, respectively. 
In Section  \ref{sec:pdsum}, we establish limit theorems for sums of products of random variables, which are not quasi-arithmetic means, but are naturally derived from the considerations of positive power means in Section 5. 
The last two sections are devoted to applications. 
In Section  \ref{sec:Cauchy}, we apply our results from Sections \ref{sec:asym-var} to \ref{sec:pdsum} to the Cauchy distribution. 
We thoroughly compare quasi-arithmetic means with other estimators of the parameters such as estimators depending on the order statistics and the maximum likelihood estimator. 
Finally, in Section \ref{sec:mix}, we deal with the mixture Cauchy model.

\section{Background}\label{sec:bkgd}

In this section, we give more detailed explanations for our motivations and related results. 

\subsection{Motivations}

Kolmogorov \cite{Kolmogorov1930} proposed  axioms of means 
and showed that if the  axioms of means hold for an $n$-ary operation on a set, 
then it has the form of a quasi-arithmetic mean.  
See \cite[Theorem 1.2]{Barczy2022} for the precise statement.   
In the study of quasi-arithmetic means, the generator $f$ is a real-valued function defined over an interval on $\mathbb R$ and it is strictly monotone and continuous on the interval. 
By de Finetti-Jessen-Nagumo's result, 
if the generator $f$ of a quasi-arithmetic mean is continuous, and homogeneous, specifically, 
\begin{equation}\label{eq:scale-equivariant} 
f^{-1} \left( \frac{1}{n} \sum_{j=1}^{n} f(a x_j)  \right) = a f^{-1} \left( \frac{1}{n} \sum_{j=1}^{n} f(x_j) \right), \ \ a, x_1, \cdots, x_n > 0,
\end{equation}
then, $f(x) = x^p, p \ne 0$ or $f(x) = \log x$. 
See Hardy-Littlewood-P\'olya \cite[p68]{Hardy1952}. 
Thus, not only the geometric and harmonic means but also power means are canonical examples of the quasi-arithmetic mean.

In \cite{Akaoka2021-1}, the authors dealt with the case that the generator of quasi-arithmetic means is given by $f(x) = \log(x+\alpha)$ or $f(x) = 1/(x+\alpha)$ for some $\alpha \in \overline{\mathbb H}$, 
where $\mathbb H$ is the upper-half plane and $\overline{\mathbb H}$ is its closure. 
We have changed the definition of the generator slightly by adding the complex number $\alpha$. 
Sections \ref{sec:neg} and \ref{sec:posi} of this paper deal with the case that 
\begin{equation}\label{eq:def-generator}
f(x) = f^{(\alpha)}_p (x) :=  (x+\alpha)^p, \ \ p \in [-1,1] \setminus \{0\}, \ \alpha \in \overline{\mathbb H}, 
\end{equation}
which we call here a {\it (complex-valued) power mean}.  
If $p=1$, which is the case of the arithmetic mean, then, regardless of the choice of $\alpha$, the quasi-arithmetic mean with generator $f$ is the arithmetic mean. 
The case of $p=0$ corresponds to the case of the geometric mean, that is, $f(x) = \log(x+\alpha)$. 
See Section \ref{sec:asym-var} for the definition of the power of complex numbers. 

There are two technical reasons for considering not only the case that $\alpha = 0$ or $\alpha \in \mathbb R$ but also the case that $\alpha \in \mathbb H$. 
One reason is that the case that $\alpha \in \mathbb H$ is easier to handle than the case that $\alpha = 0$ or $\alpha \in \mathbb R$ in terms of integrability. 
Indeed, if $p=-1$ and $\alpha \in \mathbb R$, which includes the case of the harmonic mean, then, $M^f_n$ follows the Cauchy distribution for each $n$ if $X_1$ follows the Cauchy distribution. 
See Remark \ref{rmk:harmonic} below for more details. 
Another reason is that the assumption that $\alpha = 0$ or  $\alpha \in \mathbb R$ is not appropriate for applications of the quasi-arithmetic mean to the estimation of the scale parameter of the Cauchy distribution. 
See  \cite[Corollary 2.6]{Akaoka2021-1} for more details. 
Many of our results depend on whether $\alpha \in \mathbb R$ or not. 
It is also natural to consider the case that $|p| > 1$ or $p \in \mathbb{C}$, but we do not deal with this case here.

We explain our results through the Cauchy distribution, which motivates our work, in an informal way. 
See Section \ref{sec:Cauchy} for more details. 
The Cauchy distribution is often used to formulate statistical models with heavy tails. 
Because of its heavy tails, we cannot define its expectation and variance, 
and it has no moment-generating functions. 
The arithmetic mean of a sample of any size from the Cauchy distribution has the same distribution as a sample of size one, so it is not applicable to estimating the location parameter.   
However, if we replace the arithmetic mean with a more general mean, then, we may be able to obtain the almost surely convergence. 

Now we make this intuition more precise. 
We assume that $X$ follows the Cauchy distribution with location $\mu \in \mathbb{R}$ and scale $\sigma > 0$. 
Let $\gamma := \mu + \sigma i$. 
Using the residue theorem, we can show that 
$E\left[X^p \right] = \gamma^p$ for $0 < |p| < 1$. 
Since $P(X < 0) > 0$, we allow the power $X^p$ to take {\it complex} values. 
In fact, $X^p \notin \mathbb{R}$ if $X < 0$. 
Let $X_1, X_2, \cdots$ be i.i.d.\ random variables following the Cauchy distribution with a complex parameter $\gamma \in \mathbb{H}$. 
Then, $\frac{1}{n} \sum_{j=1}^{n} X_j^p$ is an unbiased strongly-consistent estimator of $\gamma^p$. 
(For simplicity we assume that $\alpha = 0$.)
Since $E\left[|X_1^p|\right] < +\infty$,  
$\lim_{n \to \infty} \frac{1}{n} \sum_{j=1}^{n} X_j^p = \gamma^p$, almost surely.  
This is a version of the method of moments. 
By the strong law of large numbers and taking the power $1/p$, 
we obtain the strong law of large numbers for the power mean $\left(\frac{1}{n} \sum_{j=1}^{n} X_j^p\right)^{1/p}$, that is,  
$\lim_{n \to \infty} \left(\frac{1}{n} \sum_{j=1}^{n} X_j^p\right)^{1/p} = \gamma$, almost surely. 
We also see that the central limit theorem holds by the delta method. 
It is now interesting to consider more detailed properties of the power mean. 
We will show that for every $n \ge 2$, 
$\left(\frac{1}{n} \sum_{j=1}^{n} X_j^p\right)^{1/p}$ is integrable if $p \in (-1,0)$. 
By using a result in Section \ref{sec:neg}, 
we will show that $E\left[\left(\frac{1}{n} \sum_{j=1}^{n} X_j^p\right)^{1/p}\right] = \gamma$ for  $p \in (-1,0]$ and this means that the negative power mean is an unbiased estimator of $\gamma$.  
However, on the other hand, we can also show that for every $n \ge 2$, $\left(\frac{1}{n} \sum_{j=1}^{n} X_j^p\right)^{1/p}$ is not integrable if $p \in \{-1\} \cup (0,1]$. 
Our idea is to recover integrability by truncating a non-integrable term $\sum_{j=1}^{n} X_j$. 
We will show the truncated random variable $\left(\sum_{j=1}^{n} X_j^p\right)^{1/p} - \sum_{j=1}^{n} X_j$ is integrable 
and $E\left[\frac{1}{n^{1/p} -n}  \left(\left(\sum_{j=1}^{n} X_j^p\right)^{1/p} - \sum_{j=1}^{n} X_j\right) \right] = \gamma$  by using a result in Section \ref{sec:posi}.

\subsection{Related works}

In addition to \cite{Kolmogorov1930}, axiomatic treatments and properties of means have been considered for many years (e.g. \cite{Nagumo1930, Finetti1931, Aczel1948}). 
There are various definitions of means (see \cite{Bullen2003}) and accordingly 
we can consider generalized means of i.i.d.\ random variables. 
Generalized means are also related to parametric or nonparametric estimation in mathematical statistics. 
There are several techniques which are useful for generalized means of random variables. 
One technique is the {\it delta method} used in \cite{Carvalho2016, Barczy2022}. 
Berger and Casella \cite{Berger1992} found that the quasi-arithmetic mean can be regarded as a least squares estimate and the geometric and harmonic means appear in maximal likelihood estimates of the lognormal and inverse gamma distributions respectively. 
They also constructed confidence intervals by using the quasi-arithmetic mean as a point estimator.  
In the following subsections, we review Fr\'echet means and fractional moments, which are generalized means originated from statistics, and compare them with our complex-valued quasi-arithmetic means.

\subsubsection{Fr\'echet means}

The framework of Fr\'echet means is more general than the Bajraktarevi\'c mean in \cite{Barczy2022}.   
The Fr\'echet mean or the intrinsic mean of a Borel probability measure $P$ on a metric space $(M, d)$ is defined by a minimizer of the map $p \mapsto \int_M d(p,x)^2 P(dx)$ if it is uniquely determined.  
The Fr\'echet mean of the empirical measure for i.i.d.\ random variables has been considered by several authors. 
The motivations come from directional statistics and statistical shape theory. 
The uniqueness and consistency have been considered by Ziezold \cite{Ziezold1977, Ziezold1989, Ziezold1994}, Karcher \cite{Karcher1977}, Kendall \cite{Kendall1990},  Le \cite{Le1998, Le2001}, Kendall and Le \cite{Kendall2011}, and recently by Sch\"otz \cite{Schotz2021}. 
Bhattacharya and Patrangenaru \cite{Bhattacharya2002, Bhattacharya2003, Bhattacharya2005} established the central limit theorems for the intrinsic and extrinsic means. 
The techniques of differential geometry were used in these references.  
The results are summarized in the books of Bhattacharya and Bhattacharya \cite{Bhattacharya2012} and Kendall, Barden, Carne, and Le \cite{Kendall1999}. 

Recently, Kobayashi and Wynn \cite{Kobayashi2020} discussed the uniqueness of the intrinsic mean on empirical graphs, whose vertices consist of data points, in terms of metric geometry. 
As mentioned in  \cite[Section 1]{Berger1992}, the quasi-arithmetic mean can be regarded as the Fr\'echet mean of the empirical measure $P = \frac{1}{n} \sum_{i=1}^{n} \delta_{x_i}$, and the metric space $(M,d)$ where $M$ is the domain of $f$ and $d(x,y) := |f(x) - f(y)|$. 
As was considered in Itoh and Satoh \cite{Itoh2021}, it is also interesting to consider replacing the squared distance $d(p,x)^2$ appearing in the integral $\int_M d(p,x)^2 P(dx)$ with the Busemann function. 

The quasi-arithmetic mean plays an intermediate role between the arithmetic mean and the Fr\'echet mean.
Our framework, detailed in the following subsection, is contained in the framework of Fr\'echet means.
However, as we will see later, the quasi-arithmetic mean is easier to handle than the Fr\'echet mean, and its properties are quite similar to the arithmetic mean.

\subsubsection{Fractional moments}

When we deal with the method of moments, we often consider $k$-th moments for positive integer $k$.  
However, the framework of fractional moments, as well as our setting, are in a general category of the method of moments in \cite[Section 4.1]{Vaart1998}. 

We review some studies. 
Tallis and Light \cite{Tallis1968} used positive fractional moments to estimate the parameters of the {\it mixture} model of two one-parameter exponential distributions, 
improving the efficiency of moment estimators  compared to the consideration of integer moments by Rider \cite{Rider1961}. 
Later, From and Saxena \cite{From1989} used positive fractional moments for the mixture model of two scale families of positive samples. 
In contrast to \cite{Tallis1968}, \cite{From1989} obtained closed-form estimators.  

Mukherjee and Sasmal \cite{Mukherjee1984}  used positive fractional moments to estimate the shape and scale parameters of the Weibull distribution, 
and considered the optimal choice of two fractional moments such that the ratio of the asymptotic generalized variance of the maximum likelihood estimators to that of the moment estimators is maximized, by using the techniques of \cite{Tallis1968}.  
Mathai \cite{Mathai1991} considered positive and negative fractional moments of quadratic expressions of multidimensional normal random variables. 
Kozubowski \cite{Kozubowski2001} gave a method for estimating the parameters of the Linnik and Mittag-Leffler distributions based on fractional moments. 
These distributions are special cases of geometrically stable distributions and are applicable to modeling financial assets with heavy tails. 
It has recently been shown that the Mittag-Leffler distribution appears as a distributional limit of some quantities of elephant random walks (\cite{Bercu2022}). 

It is well-known that positive integer moments characterize any exponentially integrable distribution. 
Fractional moments are also used to characterize distributions of positive random variables. 
Lin \cite{Lin1992} showed that a distribution on positive reals is determined by every sequence of moments with positive fractional exponents satisfying certain conditions by using complex analysis.  
Some researches in this direction are \cite{Tagliani2003, Novi2003-1, Novi2003-2, Novi2005, Gzyl2006, Taufer2009, Gzyl2010-1, Gzyl2010-2}. 

Matsui and Pawles \cite{Matsui2016} considered fractional {\it absolute} moments with exponents between $1$ and $2$ for random variables with infinite variance and finite mean, by using relationships between the fractional absolute moments and Marchaud fractional derivatives. 
Their results are applicable to infinitely divisible distributions and compound Poisson processes. 
In probability theory, there are several types of research giving relationships between the fractional moments and the corresponding characteristic function or Laplace transform, which are reviewed in \cite[Section 1]{Matsui2016}. 
Recently, Mukhopadhyay et.\ al.\ \cite{Mukhopadhyay2021} derived the central limit theorem for the fractional positive moments of random variables when the sample distribution follows a mixture distribution consisting of dominating and outlying distributions, and exhibited that as a point estimator of the mean of the dominated distribution, the power mean performs better than the maximum likelihood estimator. 
More recently,  Buri\'c, Elezovi\'c and Mihokovi\'c \cite{Buric2023} presented estimation formulas for the expectations of the power means and the asymptotic expansion of the power means which is applicable in the case of sufficiently large data. 
Some of the other studies on fractional moments are \cite{Khalique1983, Boyarinov2003, Khan2014, Zhang2017, Shoaib2018, Xu2019}. 

However, to our knowledge, the fractional moment has been considered for positive random variables. 
A feature of our estimator is to consider the fractional moment for random variables {\it supported on $\mathbb{R}$} by allowing the fractional powers to take complex numbers. 
In the proof of \cite[Lemma 1]{Mukhopadhyay2021}, it is stated that positive fractional moments of negative values can take complex numbers, however, they assume that the negative values of a random variable are negligible. 
The situation is the same in \cite{Buric2023}. 
We are dealing with the case that the negative values of a random variable are {\it not} negligible at all.

\section{Variance asymptotics}\label{sec:asym-var}

We first give series of notation used in this paper. 

We take the principle branch of the logarithm of complex numbers, specifically, we let 
$$\log(z) := \log r + i\theta, \ \ z = r\exp(i\theta), \ \ r > 0, \theta \in (-\pi, \pi],$$ 
and 
$$ z^p := \exp(p \log(z)), \ z \in \mathbb{C}.$$
Let $z^p = 0$ if $0 < p < 1$ and $z = 0$. 
For $A \subset \mathbb C$, we denote its closure by $\overline{A}$. 
For $z \in \mathbb{C}$, $\textup{Re}(z)$ and $\textup{Im}(z)$ are the real and imaginary parts of $z$ respectively, and $|z|$ denotes the absolute value of $z$, that is, $|z| = \sqrt{\textup{Re}(z)^2+\textup{Im}(z)^2}$. 

We recall an assumption for generator $f$ of the quasi-arithmetic mean in \cite{Akaoka2021-1}. 

\begin{Ass}\label{ass-f}
Let $\alpha \in \mathbb C$ with $\textup{Im}(\alpha) \le 0$. 
Let $U = U_{\alpha}$ be a simply connected domain containing $\overline{\mathbb H} \setminus \{\alpha\}$. 
Let $f : U \to \mathbb C$ be an injective holomorphic  function such that $f\left(\overline{\mathbb H}  \setminus \{\alpha\}\right)$ is convex. 
\end{Ass}
We remark that $\overline{\mathbb H} \setminus \{\alpha\} = \overline{\mathbb H}$ if $\alpha \notin \mathbb{R}$. 
We see that $f^{-1} : f(U) \to U$ is also holomorphic. 

We say that a complex-valued random variable $Y$ is in $L^{r}, \ r > 0$ if $E\left[\left|Y^{r} \right|\right] < +\infty$, and that $Y$ is in $L^{r+}$ if $Y$ is in $L^{s}$ for some $s > r$. 
Let the expectation of a  complex-valued random variable $Y$ be 
\[ E[Y] := E\left[\textup{Re}(Y)\right] + i E\left[\textup{Im}(Y)\right].  \]
Let the variance of a  complex-valued random variable $Y$ be 
\[ \textup{Var}(Y) = E\left[ \left|Y - E[Y]\right|^2 \right]. \]
For real-valued random variables, this definition is equal to the usual definition of variances. 
We see that 
\[ \textup{Var}(Y) = \textup{Var}\left(\textup{Re}(Y)\right) + \textup{Var}\left(\textup{Im}(Y)\right). \]

Throughout this paper, we let $X_1, X_2 \cdots, $ be i.i.d.\ real-valued {\it continuous} random variables. 
Let $1 \le r < +\infty$. 
If $X_1 \in L^r$, then, by Minkowski's inequality, 
$\frac{1}{n} \sum_{i=1}^{n} X_i \in L^r$. 
On the other hand, we can show that  if $X_1 \notin L^r$, then, 
$\frac{1}{n} \sum_{i=1}^{n} X_i \notin L^r$.  

We let 
$$M^f_n :=  f^{-1} \left( \frac{1}{n} \sum_{j=1}^{n} f(X_j) \right).$$
We remark that it can happen that 
$M^f_n \in L^r$ even if $X_1 \notin L^r$.  
It is easy to establish the law of large numbers, the central limit theorem, and the large and moderate deviation principles for $\left(M^f_n\right)_n$.   
The following is our first main result. 

\begin{Thm}\label{thm:gen-var-lim}
Assume that  $E\left[\left|f(X_1)\right|^2\right] < +\infty$, $E[f(X_1)] \ne 0$ and 
\begin{equation}\label{eq:ass-integrable} 
\limsup_{n \to \infty} E\left[ \left| M^f_n \right|^{2+\varepsilon_0} \right] < +\infty 
\end{equation}
for some $\varepsilon_0 > 0$. 
Then, 
\begin{equation}\label{eq:limvar-scaled-gen} 
\lim_{n \to \infty} n \textup{Var}\left( M_n^f \right) = \frac{\textup{Var} (f(X_1))}{\left| f^{\prime} \left( f^{-1}(E[f(X_1)]) \right) \right|^2}.  
\end{equation} 
\end{Thm}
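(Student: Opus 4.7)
I would prove Theorem \ref{thm:gen-var-lim} by combining the (complex) delta method for the holomorphic inverse $g := f^{-1}$ with a Taylor decomposition around $\mu := E[f(X_1)]$ and a Scheff\'e-type uniform integrability that upgrades convergence in distribution to convergence of second moments. Write $\sigma^2 := \textup{Var}(f(X_1))$, $T_n := \frac{1}{n}\sum_{j=1}^n f(X_j) - \mu$, $V_n := M_n^f - g(\mu)$, and $c := g'(\mu) = 1/f'(g(\mu))$. Then $n\,\textup{Var}(M_n^f) = n\,\textup{Var}(V_n) = n E[|V_n|^2] - n|E[V_n]|^2$, and the target is $n\,\textup{Var}(V_n) \to |c|^2\sigma^2$. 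By Kolmogorov's SLLN, $T_n \to 0$ a.s.; the bivariate CLT applied to $(\textup{Re}\,f(X_j), \textup{Im}\,f(X_j))$ gives $\sqrt{n}\,T_n \Rightarrow W$ with $W$ a centered complex Gaussian satisfying $E[|W|^2] = \sigma^2$; and the delta method (with holomorphic $g$) yields $\sqrt{n}\,V_n \Rightarrow cW$, so $n|V_n|^2 \Rightarrow |cW|^2$ in distribution with limiting mean $|c|^2\sigma^2$.

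Fix $\delta > 0$ small enough that $\overline{B(\mu,\delta)} \subset f(U)$ and Taylor gives $|g(\mu+z) - g(\mu) - cz| \le C|z|^2$ for $|z| \le \delta$. Set $A_n := \{|T_n| \le \delta\}$, so on $A_n$ we may write $V_n = cT_n + R_n$ with $|R_n| \le C|T_n|^2$. The crucial auxiliary input is that $\{n|T_n|^2\}_n$ is uniformly integrable: it converges in distribution to $|W|^2$ and $E[n|T_n|^2] = \sigma^2 = E[|W|^2]$ for every $n$, so Scheff\'e applied via Skorokhod representation forces UI. This yields both $n E[|T_n|^2\mathbf{1}_{A_n^c}] \to 0$ and $P(A_n^c) = o(1/n)$. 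Expanding $n E[|V_n|^2\mathbf{1}_{A_n}]$ by the Taylor decomposition, the main term $|c|^2 n E[|T_n|^2\mathbf{1}_{A_n}]$ converges to $|c|^2\sigma^2$; the cross term is $O(\delta)$ because $|T_n R_n| \le C\delta|T_n|^2$ on $A_n$; and the squared remainder is $O(\delta^2)$ because $|R_n|^2 \le C^2\delta^2|T_n|^2$ on $A_n$.

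It remains to handle the centering and the tail on $A_n^c$. For the centering, $E[V_n] = E[R_n\mathbf{1}_{A_n}] + E[(V_n - cT_n)\mathbf{1}_{A_n^c}]$ since $E[T_n] = 0$; the first is $O(1/n)$ by the Taylor bound, and the second is $o(n^{-1/2})$ by H\"older against the $L^{2+\varepsilon_0}$-control \eqref{eq:ass-integrable} together with $P(A_n^c) = o(1/n)$, so $n|E[V_n]|^2 \to 0$. The main obstacle, and the step where \eqref{eq:ass-integrable} is genuinely exploited beyond what the delta method provides, is showing $n E[|V_n|^2\mathbf{1}_{A_n^c}] \to 0$: a bare H\"older split using only $\sup_n E[|V_n|^{2+\varepsilon_0}] < \infty$ and $P(A_n^c) = o(1/n)$ yields at best $O(n^{2/(2+\varepsilon_0)})$, which does not vanish, so one must combine \eqref{eq:ass-integrable} with the UI of $\{n|T_n|^2\}_n$ more delicately, for instance by a dyadic shell decomposition of $A_n^c$ according to $|T_n|$ and bounding $|V_n|^2\mathbf{1}_{A_n^c}$ against a carefully chosen product of high moments of $V_n$ and fractional moments of $T_n$ so that both factors are uniformly controlled and the shell sums converge. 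Once this tail is shown to vanish, sending $\delta \to 0$ after $n \to \infty$ absorbs the $O(\delta)$ and $O(\delta^2)$ remainders from the Taylor step and delivers $\lim_n n\,\textup{Var}(M_n^f) = |c|^2\sigma^2 = \textup{Var}(f(X_1))/|f'(f^{-1}(E[f(X_1)]))|^2$.
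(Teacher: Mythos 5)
Your architecture --- Taylor-expand $g := f^{-1}$ at $\mu := E[f(X_1)]$, split the second moment over $A_n := \{|T_n| \le \delta\}$ and its complement, use uniform integrability of $\{n|T_n|^2\}$ for the main term and the centering, then let $\delta \to 0$ --- is the same as the paper's, and your treatment of the centering via the $L^{2+\varepsilon_0}$ bound together with $P(A_n^c) = o(1/n)$ is correct (the paper reaches the same conclusion by first proving $nE[|M_n^f - g(\mu)|^2]$ converges to the target and then showing $\sqrt{n}E[V_n] \to 0$ by a separate truncation argument).

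The genuine gap is the tail term $nE[|V_n|^2\mathbf{1}_{A_n^c}] \to 0$, which you explicitly leave open. You correctly see that a bare H\"older gives only $o(n^{2/(2+\varepsilon_0)})$, but the dyadic-shell remedy you sketch does not close either: on the shell $\{2^{k-1}\delta < |T_n| \le 2^k\delta\}$, Chebyshev and H\"older against $\sup_n E[|V_n|^{2+\varepsilon_0}] < \infty$ yield a contribution $\lesssim n^{2/(2+\varepsilon_0)}\,4^{-k\varepsilon_0/(2+\varepsilon_0)}$, and summing in $k$ still leaves the divergent $n^{2/(2+\varepsilon_0)}$; uniform integrability of $\{|W_n|^2\}$ (where $W_n := \sqrt{n}\,T_n$) improves the shell probabilities by an $o(1)$ factor but supplies no rate, so the sum does not vanish as written. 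The paper's device at this step, which your proposal is missing, is to trade the factor of $n$ for $|W_n|^2$ \emph{before} applying H\"older: on $A_n^c$ one has $n \le |W_n|^2/\delta^2$, hence $nE[|V_n|^2\mathbf{1}_{A_n^c}] \le \delta^{-2}E[|V_n|^2|W_n|^2\mathbf{1}_{A_n^c}]$, and H\"older then reduces the task to $E[|W_n|^p, |W_n| > \sqrt{n}\delta] \to 0$ with $p = 2(2+\varepsilon_0)/\varepsilon_0$. Be aware, though, that even the paper's justification of this last limit is terse --- it is attributed to the CLT for $W_n$, but $f(X_1) \in L^2$ alone does not guarantee $E[|W_n|^p] < \infty$ for $p > 2$ --- so the tail estimate is genuinely the delicate point where the $L^{2+\varepsilon_0}$ hypothesis must be exploited with care, and it is the part on which both your sketch and a literal reading of the paper's proof are thinnest.
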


In the above theorem, the assumptions are imposed for the distribution of $f(X_1)$, not for the distribution of $X_1$ itself. 
We call the value of $\dfrac{\textup{Var} (f(X_1))}{\left| f^{\prime} \left( f^{-1}(E[f(X_1)]) \right) \right|^2}$ the {\it asymptotic variance} of $(M^f_n)_n$. 
We can apply this theorem to not only the power means but also the framework of \cite{Akaoka2021-1}, specifically, the geometric and harmonic means. 
See \cite[Theorems 2.1 and 3.1]{Akaoka2021-1}. 
By the assumption that $X_1$ is continuous and the strong law of large numbers, 
\begin{equation}\label{eq:SLLN-cvx} 
E[f(X_1)] = \lim_{n \to \infty} \frac{1}{n} \sum_{j=1}^{n} f(X_j) \in f\left(\overline{\mathbb H}  \setminus \{\alpha\}\right),  \ P\textup{-a.s.} 
\end{equation} 
By the Riemann mapping theorem, $ f^{\prime} \ne 0$ on $U$. 
Furthermore, since $f$ is injective and $X_1$ is continuous, 
$\textup{Var}(f(X_1)) > 0$. 

We identify $\mathbb{C}$ with $\mathbb{R}^2$. 
Let $J (f^{-1})$ be the Jacobi matrix of $f^{-1}$ at $E[f(X_1)]$, 
that is, 
\[ J (f^{-1}) = \begin{pmatrix} \frac{\partial \textup{Re}(f^{-1})}{\partial x}(E[f(X_1)]) & \frac{\partial \textup{Re}(f^{-1})}{\partial y} (E[f(X_1)])\\ \frac{\partial \textup{Im}(f^{-1})}{\partial x} (E[f(X_1)]) & \frac{\partial \textup{Im}(f^{-1})}{\partial y} (E[f(X_1)]) \end{pmatrix}. \]

Let $\textup{Cov}\left(f(X_1)\right)$ be the covariance matrix of the $\mathbb{R}^2$-valued random variable $f(X_1)$, that is, 
\begin{equation*}
\textup{Cov}\left(f(X_1)\right) =  \begin{pmatrix} \textup{Var}(\textup{Re}(f(X_1)))  & \textup{Cov}\left(\textup{Re}(f(X_1)), \textup{Im}(f(X_1))\right) \\ \textup{Cov}\left(\textup{Re}(f(X_1)), \textup{Im}(f(X_1))\right)  & \textup{Var}(\textup{Im}(f(X_1))) \end{pmatrix}. 
\end{equation*}

By the delta method, we can easily establish the central limit theorem for $M^f_n$, specifically, 
\begin{equation}\label{eq:delta-CLT}
 \sqrt{n} \left(  M_n^f - f^{-1}(E[f(X_1)])\right) \Rightarrow N\left(0, J(f^{-1}) \textup{Cov}(f(X_1)) J(f^{-1})^{\prime} \right), \ n \to \infty,   
\end{equation}
where $\Rightarrow$ means the convergence in distribution and  
$N(\cdot, \cdot)$ denotes the two-dimensional normal distribution. 
See \cite[Theorem 1.5]{Akaoka2021-1}. 
We can apply this to construct asymptotic confidence discs for the Cauchy distribution. 
See Section \ref{sec:Cauchy} for more details.  

If we consider the case that $p=1$, that is, $f(x) = x+\alpha$, then, the asymptotic variance is $\textup{Var}(X_1)$ and it holds that $n \textup{Var}\left( M_n^f \right) = \textup{Var}(X_1)$ for every $n$. 
However, as indicated in \cite[Example 10.1.8]{Cassela1990}, it is delicate to handle the asymptotic variances in general.  
Our proof depends on the fast decay of the tails of the normal distribution. 

\begin{proof} 
We first show that 
\begin{equation}\label{eq:intermediate} 
\lim_{n \to \infty} n E \left[ \left| M_n^f  - f^{-1}(E[f(X_1)]) \right|^2 \right] = \frac{\textup{Var} (f(X_1))}{\left| f^{\prime} \left( f^{-1}(E[f(X_1)]) \right) \right|^2}. 
\end{equation} 

By the Taylor expansion of the holomorphic function $f^{-1}$ at $E[f(X_1)]$, 
it holds that 
for every $\varepsilon > 0$, there exists $\delta \in (0, |E[f(X_1)]|)$ such that 
\[ \left| f^{-1}(z) - f^{-1}(E[f(X_1)]) - (f^{-1})^{\prime}(E[f(X_1)])(z - E[f(X_1)]) \right| \le \varepsilon |z - E[f(X_1)]| \]
if $|z - E[f(X_1)]| \le \delta$.

We now identify $\mathbb C$ with $\mathbb R^2$. 
We denote the standard inner product of $x, y \in \mathbb R^2$ by $\braket{x,y}$ and denote Euclidian norm of $x \in \mathbb{R}^2$ by $|x|$. 
Let $F_1 := \textup{Re}(f^{-1})$ and $D_1 := \nabla F_1 (E[f(X_1)]) \in \mathbb{R}^2$. 
Then, $\left| \braket{D_1, z - E[f(X_1)]} \right| \le |D_1| \left| z - E[f(X_1)] \right|$. 

There exists a positive constant $C$ such that for $z \in \mathbb{R}^2$ with $\left|z - E[f(X_1)]\right| \le \delta$, 
\[ \left|(F_1 (z) - F_1 (E[f(X_1)]))^2 - \braket{D_1, z - E[f(X_1)]}^2 \right| \]
\[\le \varepsilon |z - E[f(X_1)]| \left( \varepsilon |z - E[f(X_1)]| + \left|\braket{D_1, z - E[f(X_1)]}\right| \right)\]
\[ \le  C  \left|z - E[f(X_1)]\right|^2 \varepsilon. \]

For ease of notation, let $\displaystyle Z_n :=  \frac{1}{n} \sum_{j=1}^{n} f(X_j)$. 
Then, 
\[ n \biggl|E\left[ (F_1 (Z_n) - F_1 (E[f(X_1)]))^2, \left|Z_n - E[f(X_1)]\right| \le \delta\right] \]
\[- E\left[ \braket{D_1, Z_n - E[f(X_1)]}^2, \, \left|Z_n - E[f(X_1)]\right| \le \delta\right]  \biggr| \]
\[ \le C \textup{Var} (f(X_1)) \varepsilon. \]

We will show that 
\begin{equation}\label{eq:wts-1} 
\lim_{n \to \infty} n E\left[\braket{D_1, Z_n - E[f(X_1)]}^2, \left|Z_n - E[f(X_1)]\right| > \delta\right] = 0. 
\end{equation}
Let $W_n := \sqrt{n} (Z_n - E[f(X_1)])$. 
Then, in order to show \eqref{eq:wts-1}, 
it suffices to show that 
\begin{equation}\label{eq:wts-1-2} 
\lim_{n \to \infty}  E\left[\left| W_n\right|^2, |W_n| > \sqrt{n} \delta\right] = 0. 
\end{equation}

By the assumption that $E\left[ \left|f(X_1)\right|^2\right] < +\infty$, 
we can apply the multidimensional central limit theorem, and we see that $W_n \Rightarrow N(0, \textup{Cov}\left(f(X_1)\right)), \ n \to +\infty$. 
By this, we can show that 
\[ \lim_{M \to +\infty} \left(\limsup_{n \to \infty}  E\left[\left| W_n\right|^2, |W_n| > M \right] \right) = 0. \] 
\eqref{eq:wts-1-2} follows  from this. 

We will show that 
\begin{equation}\label{eq:wts-2}  
\lim_{n \to \infty} n E\left[ (F_1 (Z_n) - F_1 (E[f(X_1)]))^2, \, |Z_n - E[f(X_1)]| > \delta\right] = 0. 
\end{equation}

If $|Z_n - E[f(X_1)]| > \delta$, then $|W_n|^2 / \delta^2  \ge n$ and hence, 
it suffices to show that 
\[ \lim_{n \to \infty}  E\left[ (F_1 (Z_n) - F_1 (E[f(X_1)]))^2 |W_n|^2, \, |W_n| > \sqrt{n}\delta\right] = 0.\]

By the assumption \eqref{eq:ass-integrable} and the H\"older inequality, 
in order to show \eqref{eq:wts-2}, it suffices to show that 
for every $p > 2$,
\[ \lim_{n \to \infty}  E\left[ |W_n|^p, |W_n| > \sqrt{n}\delta\right] = 0.\] 
This follows from the fact that $W_n \Rightarrow N(0, \textup{Cov}\left(f(X_1)\right)), \ n  \to +\infty$, as in the above. 

By \eqref{eq:wts-1} and \eqref{eq:wts-2}, 
we see that 
\[ \limsup_{n \to \infty} n \biggl|E\left[ (F_1 (Z_n) - F_1 (E[f(X_1)]))^2\right] - E\left[\braket{D_1, Z_n - E[f(X_1)]}^2\right]  \biggr| \]
\[ \le C \textup{Var} (f(X_1)) \varepsilon. \]
By letting $\varepsilon \to +0$, 
\[ \limsup_{n \to \infty} n \biggl|E\left[ (F_1 (Z_n) - F_1 (E[f(X_1)]))^2\right] - E\left[\braket{D_1, Z_n - E[f(X_1)]}^2\right]  \biggr| = 0. \]

Let $F_2 := \textup{Im}(f^{-1})$ and $D_2 := \nabla F_2 (E[f(X_1)]) \in \mathbb{R}^2$. 
Then, in the same manner as in the case of $F_1$, 
we can show that 
\[ \limsup_{n \to \infty} n \biggl|E\left[ (F_2 (Z_n) - F_2 (E[f(X_1)]))^2\right] - E\left[\braket{D_2, Z_n - E[f(X_1)]}^2\right]  \biggr| = 0. \]

We remark that 
\[ E\left[\braket{D_1, Z_n - E[f(X_1)]}^2  + \braket{D_2, Z_n - E[f(X_1)]}^2 \right] = E\left[\left| J(f^{-1}) (Z_n - E[f(X_1)])\right|^2\right]. \]

By noting the fact that $f$ and $f^{-1}$ are both holomorphic,  
it holds that 
\[  \lim_{n \to \infty} n E\left[\left| J(f^{-1}) (Z_n - E[f(X_1)])\right|^2\right] = \frac{\textup{Var} (f(X_1))}{\left| f^{\prime} \left( f^{-1}(E[f(X_1)]) \right) \right|^2}. \]

Thus we see \eqref{eq:intermediate}.

Since the function $z \mapsto E \left[ \left| M_n^f  - z \right|^2 \right]$ attains its minimum on $\mathbb R^2$ at $z = E\left[M_n^f \right]$, 
\begin{equation}\label{eq:sup-var-upper} 
\limsup_{n \to \infty} n \textup{Var}\left( M_n^f \right) \le \frac{\textup{Var} (f(X_1))}{\left| f^{\prime} \left( f^{-1}(E[f(X_1)]) \right) \right|^2}.  
\end{equation} 

For ease of notation, 
we let $\widetilde W_n = \left(\widetilde W^{(1)}_{n}, \widetilde W^{(2)}_{n}\right) := \sqrt{n} \left(M_n^f - f^{-1}(E[f(X_1)])\right)$.  
Then, by \eqref{eq:delta-CLT}, $\widetilde W^{(1)}_n \Rightarrow N(0, c_1), n \to +\infty,$ for some $c_1 \ge 0$.  
By \eqref{eq:intermediate}, 
\begin{equation}\label{eq:L2bdd-W1}
\limsup_{n \to \infty} E\left[ \left|\widetilde W^{(1)}_n\right|^2 \right] < +\infty.
\end{equation}

For $M > 0$, 
let $\psi_{M} (x) := \begin{cases} x \ \ \ \ |x| \le M \\ -M \ \ x \le -M \\ M \ \ \ x \ge M \end{cases}$. 
Then, 
$\displaystyle \lim_{n \to \infty} E\left[ \psi_M \left(\widetilde W^{(1)}_n\right)\right] = 0$. 
By \eqref{eq:L2bdd-W1}, 
\[ E\left[ \left| \widetilde W^{(1)}_n - \psi_M \left(\widetilde W^{(1)}_n\right) \right| \right]  \le E\left[ \left|\widetilde W^{(1)}_n\right|, \left|\widetilde W^{(1)}_n\right| > M \right]  = O(M^{-1}). \]
Hence, 
$\displaystyle \lim_{n \to \infty} E\left[ \widetilde W^{(1)}_n\right] = 0$. 
In the same manner, we see that 
$\displaystyle \lim_{n \to \infty} E\left[ \widetilde W^{(2)}_n\right] = 0$. 
Therefore, by using \eqref{eq:delta-CLT}, 
\[ \liminf_{n \to \infty} n \textup{Var}\left( M_n^f \right) = \liminf_{n \to \infty}  \textup{Var}\left( \widetilde W_n \right) = \liminf_{n \to \infty}E\left[ \left| \widetilde W_n \right|^2 \right] \]
\begin{equation}\label{eq:inf-var-lower}  
\ge \frac{\textup{Var} (f(X_1))}{\left| f^{\prime} \left( f^{-1}(E[f(X_1)]) \right) \right|^2}.  
\end{equation}

By \eqref{eq:sup-var-upper} and \eqref{eq:inf-var-lower}, we have the assertion. 
\end{proof}

\begin{Prop}\label{prop:var-infinite}
If $f(X_1) \in L^1$ and $f(X_1) \notin L^2$, then, we have the following:\\
(i) 
\begin{equation}\label{eq:prevar-infinite} 
\lim_{n \to \infty} n E\left[ \left| M^f_n  -  f^{-1}(E[f(X_1)]) \right|^2 \right] = +\infty. 
\end{equation}
and furthermore, 
$\left( \sqrt{n}\left(M^f_n - f^{-1}(E[f(X_1)])\right) \right)_{n \ge 1}$ does not converge to any distribution on $\mathbb{R}^2$.  \\
(ii) If additionally 
\begin{equation}\label{eq:ass-integrable-L1} 
\limsup_{n \to \infty} E\left[ \left| M^f_n \right|^{1+\varepsilon_0} \right] < +\infty 
\end{equation}
for some $\varepsilon_0 > 0$, 
then, 
\begin{equation}\label{eq:var-infinite} 
\lim_{n \to \infty} n \textup{Var}\left( M_n^f \right) = +\infty. 
\end{equation} 
\end{Prop}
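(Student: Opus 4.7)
The plan is to use the local holomorphic inversion of $f$ near $E[f(X_1)]$ to reduce both assertions to classical statements about the i.i.d.\ partial sum $S_n := \sum_{j=1}^{n}(f(X_j) - E[f(X_1)])$, for which the hypothesis $f(X_1) \in L^1 \setminus L^2$ both drives the truncated second moment to infinity and forbids ordinary $\sqrt{n}$-scale distributional convergence. Since $f^{-1}$ is holomorphic at $E[f(X_1)]$ with $(f^{-1})^{\prime}(E[f(X_1)]) \neq 0$ (by Assumption~\ref{ass-f} and the Riemann mapping theorem), there exist $\delta > 0$ and $C > 0$ with $|f^{-1}(z) - f^{-1}(E[f(X_1)])| \geq C|z - E[f(X_1)]|$ for $|z - E[f(X_1)]| \leq \delta$.

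For the first assertion of (i), I would localize on the event $A_n := \{|Z_n - E[f(X_1)]| \leq \delta\}$ (whose probability tends to $1$ by the strong law), giving
\[
n E\bigl[\bigl|M^f_n - f^{-1}(E[f(X_1)])\bigr|^2\bigr] \geq C^2 \cdot \frac{1}{n} E\bigl[|S_n|^2 \mathbf{1}_{|S_n|\leq n\delta}\bigr].
\]
To show the right-hand side diverges I would truncate each summand: for any $M > 0$, the variance $\sigma_M^2$ of $(f(X_1)-E[f(X_1)])\mathbf{1}_{|f(X_1)-E[f(X_1)]|\leq M}$ tends to $+\infty$ as $M \to \infty$ because $f(X_1) \notin L^2$. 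Applying the classical CLT to the truncated sum and using that with high probability $S_n$ coincides with the truncated sum, one obtains $\liminf_n \frac{1}{n} E[|S_n|^2 \mathbf{1}_{|S_n|\leq n\delta}] \geq \sigma_M^2$ for every $M$, hence the limit is $+\infty$.

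For the non-convergence assertion of (i), I would argue by contradiction. If $\sqrt{n}(M^f_n - f^{-1}(E[f(X_1)])) \Rightarrow W$ in $\mathbb{R}^2$, tightness and the almost-sure convergence $M^f_n \to f^{-1}(E[f(X_1)])$ allow a Taylor expansion of $f$ around $f^{-1}(E[f(X_1)])$ yielding
\[
\sqrt{n}(Z_n - E[f(X_1)]) = f^{\prime}(f^{-1}(E[f(X_1)])) \cdot \sqrt{n}\bigl(M^f_n - f^{-1}(E[f(X_1)])\bigr) + o_P(1),
\]
so $S_n/\sqrt{n}$ converges in distribution to $f^{\prime}(f^{-1}(E[f(X_1)])) W$. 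The classical L\'evy--Feller characterization then forces either $f(X_1) \in L^2$ (when the limit is non-degenerate Gaussian), contradicting the hypothesis, or the degenerate limit $W = 0$; the latter is in turn ruled out by coupling the first half of (i) with the Gnedenko--Raikov sufficient conditions for $S_n/\sqrt{n} \to 0$ in probability, which are incompatible with the $L^2$-blow-up just established.

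For (ii), the $L^{1+\varepsilon_0}$ uniform bound combined with the a.s.\ convergence $M^f_n \to f^{-1}(E[f(X_1)])$ gives uniform integrability in $L^1$, so $E[M^f_n] \to f^{-1}(E[f(X_1)])$. Starting from
\[
n\textup{Var}(M^f_n) = n E\bigl[|M^f_n - f^{-1}(E[f(X_1)])|^2\bigr] - n\bigl|E[M^f_n] - f^{-1}(E[f(X_1)])\bigr|^2,
\]
the first term diverges by (i); a second-order Taylor expansion of $f^{-1}$ at $E[f(X_1)]$ (in which the linear term vanishes because $E[Z_n] = E[f(X_1)]$, leaving only the quadratic remainder) together with a truncated use of the $L^{1+\varepsilon_0}$ bound shows that the bias $|E[M^f_n] - f^{-1}(E[f(X_1)])|$ is of smaller order than $E[|M^f_n - f^{-1}(E[f(X_1)])|^2]^{1/2}$, so the second term is of strictly smaller order than the first. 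The main obstacle lies in the non-convergence step: excluding the degenerate limit $W = 0$ requires invoking the Gnedenko--Raikov tail conditions and coupling them quantitatively with the lower-bound estimate from the first half of (i), since the $L^2$-blow-up alone is compatible with mass escaping to infinity.
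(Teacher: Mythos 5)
Your overall reduction---using the local bi-Lipschitz behaviour of $f^{-1}$ near $E[f(X_1)]$ to transfer the question to the i.i.d.\ partial sums $S_n$---is exactly what the paper does. However, there are genuine gaps in the subsequent steps.

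For the first half of (i), the key assertion that \emph{``with high probability $S_n$ coincides with the truncated sum''} is simply false when $M$ is fixed and $n\to\infty$: the probability that all $n$ summands escape truncation is $P(|Z_1|\le M)^n\to 0$. So one cannot pass from the CLT for the truncated sums to a lower bound on $\frac{1}{n}E\bigl[|S_n|^2\mathbf{1}_{|S_n|\le n\delta}\bigr]$ by this device; making it rigorous would in effect require reproving the classical truncation lemma, which your sketch skips. The paper avoids this entirely by the elementary Chebyshev-type observation
\[
n\,E\Bigl[\,\Bigl|\tfrac1n\sum Z_i\Bigr|^2,\ \Bigl|\tfrac1n\sum Z_i\Bigr|\le c_2\Bigr]
\ \ge\ M\,P\Bigl(\tfrac{M}{\sqrt n}\le \Bigl|\tfrac1n\sum Z_i\Bigr|\le c_2\Bigr)
\]
and then invokes the standard result (Durrett, Exercise~3.4.3) that $Z_1\in L^1\setminus L^2$ and $E[Z_1]=0$ imply $\liminf_n P\bigl(|S_n|/\sqrt n\ge M\bigr)\ge 1/10$ for every $M$. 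Since $M$ is arbitrary, the limit is $+\infty$.

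For the second half of (i), your worry about a degenerate limit $W=0$ and the appeal to Gnedenko--Raikov conditions is unnecessary: the same Durrett exercise says $\liminf_n P\bigl(|S_n|/\sqrt n\ge M\bigr)\ge 1/10$ \emph{for all} $M$, which directly denies tightness of $\bigl(S_n/\sqrt n\bigr)_n$ and hence any distributional convergence, degenerate or not. The delta method in the reverse direction (as you set up) then converts a hypothetical limit of $\sqrt n\bigl(M^f_n-f^{-1}(E[f(X_1)])\bigr)$ into a limit of $S_n/\sqrt n$, giving the contradiction in one stroke.

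For (ii), your decomposition into truncated second moment minus squared bias, with a claimed higher-order bias estimate from a second-order Taylor expansion, is a different route from the paper and would require serious additional justification: the vanishing of the linear term controls the $O(1/n)$ Taylor remainder, but the bias also receives contributions from the tail event $|Z_n-E[f(X_1)]|>\delta$ that are not controlled just by the $L^{1+\varepsilon_0}$ bound and the expansion. The paper sidesteps this by first using the uniform integrability to get $E[M^f_n]\to f^{-1}(E[f(X_1)])$, then recentering the whole argument at $E[M^f_n]$ rather than at $f^{-1}(E[f(X_1)])$, and running the same Chebyshev/Durrett argument; this reproduces divergence of $n\,\textup{Var}(M^f_n)=n\,E\bigl[|M^f_n-E[M^f_n]|^2\bigr]$ directly, with no bias estimate needed.
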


\begin{proof}
We show (i). 
Since $f$ is locally Lipschitz around $ f^{-1}(E[f(X_1)])$, 
there exist $c_1, c_2 > 0$ such that 
\[ \left| f^{-1}(z) -  f^{-1}(E[f(X_1)]) \right| \ge c_1 |z -  E[f(X_1)]|, \ z \in B\left(E[f(X_1)], c_2\right). \]
Hence it suffices to show that 
\[ \lim_{n \to \infty} n E\left[ \left| \frac{1}{n} \sum_{i=1}^{n} Z_i  \right|^2, \ \  \left| \frac{1}{n} \sum_{i=1}^{n} Z_i \right| \le c_2 \right] = +\infty,  \]
where we let $Z_i := f(X_i) - E[f(X_i)]$. 
For every $M > 0$, 
\[ n E\left[ \left| \frac{1}{n} \sum_{i=1}^{n} Z_i  \right|^2, \ \  \left| \frac{1}{n} \sum_{i=1}^{n} Z_i \right| \le c_2 \right] \ge M P\left( \frac{M}{\sqrt{n}} \le \left| \frac{1}{n} \sum_{i=1}^{n} Z_i \right| \le c_2 \right). \]
Since $Z_1 \in L^1$, 
\[ \liminf_{n \to \infty} n E\left[ \left| \frac{1}{n} \sum_{i=1}^{n} Z_i  \right|^2, \ \  \left| \frac{1}{n} \sum_{i=1}^{n} Z_i \right| \le c_2 \right] 
\ge M  \liminf_{n \to \infty} P\left( \frac{M}{\sqrt{n}} \le \left| \frac{1}{n} \sum_{i=1}^{n} Z_i \right|  \right). \]
Since $Z_1 \notin L^2$ and $E[Z_1] = 0$, 
we can apply the argument in \cite[Exercise 3.4.3]{Durrett2019} and obtain that 
\[ \liminf_{n \to \infty} P\left( \frac{M}{\sqrt{n}} \le \left| \frac{1}{n} \sum_{i=1}^{n} Z_i \right|  \right) \ge \frac{1}{10}.  \]
Since we can take arbitrarily large $M$, we have \eqref{eq:prevar-infinite}. 

Assume that $\left( \sqrt{n}\left(M^f_n - f^{-1}(E[f(X_1)])\right) \right)_{n \ge 1}$ converges to a distribution on $\mathbb{R}^2$.  
By the delta method and \cite[Exercise 3.4.3]{Durrett2019}, 
we conclude $f(X_1) \in L^2$, which contradicts the assumption. 

We show (ii). 
By $f(X_1) \in L^1$ and \eqref{eq:ass-integrable-L1}, 
$\lim_{n \to \infty} E\left[M^f_n\right] = f^{-1}\left(E[f(X_1)]\right)$.  
Let $Z_i := f(X_i) - f\left(E\left[M^f_n\right]\right)$. 
Then, $\frac{1}{n} \sum_{i=1}^{n} Z_i$ converges to $0$ in probability. 
The rest of the proof goes in the same manner as the above proof. 
\end{proof}

\begin{Rem}
(i) We consider the case that one of the assumptions in Theorem \ref{thm:gen-var-lim} or Proposition \ref{prop:var-infinite} fails. 
Assume that $X_1$ follows the Cauchy distribution and $f$ is a power mean, specifically, $f(x) = (x+\alpha)^p$ for $0 < |p| < 1$ and $\alpha \in \overline{\mathbb{H}}$. 
By Theorem \ref{thm:negative-unbiased} (ii), 
if $p=-1$, $\alpha \in \mathbb{R}$, then, $f(X_1) \notin L^1$ and $M_n^f \notin L^1$.  
By Theorem \ref{thm:negative-Cauchy-var} (iii), 
if $p \in (-1,-1/2)$, $\alpha \in \mathbb{R}$, then, $f(X_1) \in L^1$, $f(X_1) \notin L^2$ and \eqref{eq:ass-integrable} holds for some $\varepsilon_0 > 0$.  
Furthermore, \eqref{eq:limvar-scaled-gen} fails and \eqref{eq:var-infinite} holds. 
By Theorem \ref{thm:truncated-integrability}, 
if $p \in (0,1/2)$, then, $f(X_1) \in L^2$ and $M_n^f \notin L^1$. 
Hence \eqref{eq:ass-integrable} fails for every $\varepsilon_0 > 0$ and \eqref{eq:limvar-scaled-gen} fails. 
In this case, we see that $E\left[M^f_n\right] = f^{-1}\left(E[f(X_1)]\right)$, hence we only need to show \eqref{eq:intermediate}. \\
(ii) We can consider the Edgeworth expansion of $\left(M_n^f\right)_n$. 
By \cite[Theorem 2 and Remark 1.1]{Bhattacharya1978}, 
we see that if $f(X_1) \in L^r, r \ge 3$, $J(f^{-1}) \textup{Cov}(f(X_1)) J(f^{-1})^{\prime}$ is nonsingular, and the Cram\'er condition 
$$\limsup_{|(\lambda_1, \lambda_2)| \to +\infty} E\left[\exp\left(i   \left( \lambda_1 \textup{Re}(f(X_1)) + \lambda_2 \textup{Im}(f(X_1)) \right)  \right) \right] = 0$$ 
hold, 
then, there exist smooth integrable functions $(\psi_{s,n})_n$ on $\mathbb R^2$ such that 
\[ \sup_{B: \textup{ open ball in } \mathbb{R}^2} \left|P\left( \sqrt{n} \left(  M_n^f - f^{-1}(E[f(X_1)])\right) \in B\right) - \int_B \psi_{s,n} (x) dx \right| = o\left(n^{-(r-2)/2} \right). \]
\end{Rem}

\section{Negative power means of random variables}\label{sec:neg}

We first deal with the integrability of the negative power means, specifically, $f(x) = (x+\alpha)^p$ for $-1 \le p  < 0$ and $\alpha \in \overline{\mathbb{H}}$. 
We recall that $X_1, X_2 \cdots, $ are i.i.d.\ real-valued continuous random variables. 

\begin{Thm}[integrability]\label{thm:negative-var}
For $r > 0$ and $n \ge 1$, we have the following:\\
(i) 
Let $p \in (-1,0)$ and $\alpha \in \overline{\mathbb H}$. 
Let $X_1 \in L^{r/n}$. 
Then, $M^f_n \in L^r$.\\
(ii) 
Let $p=-1$ and $\alpha \in \mathbb H$. 
Let $X_1 \in L^{2r/n}$. 
Then,  $M^f_n \in L^r$. 
\end{Thm}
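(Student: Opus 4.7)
The plan is to reduce both (i) and (ii) to a uniform (deterministic) lower bound on $|T_n| := \bigl|\tfrac{1}{n}\sum_{j=1}^n Y_j^p\bigr|$, where $Y_j := X_j+\alpha$. Indeed $M_n^f + \alpha = T_n^{1/p}$, so $|M_n^f + \alpha|^r = |T_n|^{r/p}$ with $r/p < 0$, and the task is to bound $E[|T_n|^{r/p}]$ from above. Once $|T_n|$ is bounded below by an inverse power of $\min_{1\leq j\leq n}|Y_j|$, the problem becomes one of estimating a negative moment of that minimum. The shift by $\alpha$ is harmless since $|Y_j|^q \leq 2^q(|X_j|^q + |\alpha|^q)$.

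For part (i), the lower bound on $|T_n|$ comes from a geometric sector argument. Since $Y_j \in \overline{\mathbb{H}}$ one has $\arg(Y_j) \in [0,\pi]$, so $\arg(Y_j^p) \in [-|p|\pi, 0]$ for every $j$; all summands lie in one closed sector of opening $|p|\pi < \pi$. Rotating by the midangle $|p|\pi/2$, each rotated term has real part at least $|Y_j|^p\cos(|p|\pi/2)$, and therefore
\[
|T_n| \;\geq\; \cos\!\left(\tfrac{|p|\pi}{2}\right)\cdot\tfrac{1}{n}\sum_{j=1}^n |Y_j|^p \;\geq\; \tfrac{\cos(|p|\pi/2)}{n}\max_{1\leq j\leq n}|Y_j|^p \;=\; \tfrac{\cos(|p|\pi/2)}{n\,(\min_j|Y_j|)^{|p|}},
\]
where the last equality uses $|Y_j|^p = |Y_j|^{-|p|}$. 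Raising to the negative power $r/p$ gives $|M_n^f+\alpha|^r \leq C_{p,r,n}\,(\min_j|Y_j|)^r$.

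For part (ii) the sector has opening exactly $\pi$ and the previous bound degenerates; this is precisely where the strengthened hypothesis $\alpha \in \mathbb{H}$ becomes indispensable. Instead one uses that $\mathrm{Im}(1/Y_j) = -\mathrm{Im}(\alpha)/|Y_j|^2$ has a common strictly negative sign, so
\[
|T_n| \;\geq\; |\mathrm{Im}(T_n)| \;=\; \tfrac{\mathrm{Im}(\alpha)}{n}\sum_{j=1}^n |Y_j|^{-2} \;\geq\; \tfrac{\mathrm{Im}(\alpha)}{n\,(\min_j|Y_j|)^2},
\]
and hence $|M_n^f + \alpha|^r \leq C_{\alpha,n}\,(\min_j|Y_j|)^{2r}$.

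In both cases the final step is identical: from the elementary inequality $(\min_j|Y_j|)^n \leq \prod_{j=1}^n|Y_j|$ and independence,
\[
E\bigl[(\min_j|Y_j|)^{nq}\bigr] \;\leq\; \prod_{j=1}^n E[|Y_j|^q] \;=\; (E[|Y_1|^q])^n \qquad (q>0),
\]
which is finite whenever $X_1 \in L^q$. Choosing $q = r/n$ in (i) and $q = 2r/n$ in (ii) matches the stated hypotheses exactly. The main obstacle is the sector bound in (i), which uses $|p|<1$ in an essential way: the case $p=-1$ is precisely the boundary where that geometric estimate breaks, and recovering integrability there forces both the switch to the imaginary-part bound and the strengthened hypothesis $\mathrm{Im}(\alpha)>0$.
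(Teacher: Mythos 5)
Your proof is correct and follows essentially the same approach as the paper's: the same rotation/sector argument in (i), the same imaginary-part lower bound in (ii), and independence applied to a product of the $|Y_j|$. The only difference is cosmetic — the paper applies the geometric–harmonic mean inequality $\frac{1}{n}\sum|Y_j|^p \geq \prod_j|Y_j|^{p/n}$ (resp.\ $n/\sum|Y_j|^{-2}\leq\prod_j|Y_j|^{2/n}$) directly, whereas you first drop to $\min_j|Y_j|$ via $\sum\geq\max$ and then use $(\min_j|Y_j|)^n\leq\prod_j|Y_j|$, arriving at the same product bound up to a harmless constant.
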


The case that $p=-1$ and $\alpha \in \mathbb{R}$ is harder to deal with, because $M^f_n$ is equal to $X_1$ in distribution when $X_1$ follows the Cauchy distribution.

\begin{Thm}\label{thm:neg-var-lim}
Assume that $X_1 \in L^{0+}$. 
Then, \\
(i) Assume that either (a) $p \in [-1,0)$ and $\alpha \in \mathbb H$, 
or, (b) $p \in (-1,0)$, $\alpha \in \mathbb R$ and $f(X_1) \in L^{2}$  
holds. 
Then, \eqref{eq:limvar-scaled-gen} holds.\\
(ii) If $p \in (-1,0)$, $\alpha \in \mathbb R$, $f(X_1) \in L^1$ and $f(X_1) \notin L^{2}$, 
then, \eqref{eq:var-infinite} holds. 
\end{Thm}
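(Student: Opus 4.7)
The plan is to derive (i) from Theorem \ref{thm:gen-var-lim} and (ii) from Proposition \ref{prop:var-infinite} (ii), so the task reduces to verifying the hypotheses of those two results. The hypothesis $E[|f(X_1)|^2]<\infty$ is automatic in case (a): $|x+\alpha|\ge\textup{Im}(\alpha)>0$ for $\alpha\in\mathbb{H}$ and $x\in\mathbb{R}$ gives $|f(x)|\le(\textup{Im}(\alpha))^p$ (recall $p<0$), so $f$ is bounded; in case (b) it is the standing assumption. The hypothesis $E[f(X_1)]\ne0$ holds because $(x+\alpha)^p$ never vanishes on the relevant domain, while \eqref{eq:SLLN-cvx} places $E[f(X_1)]$ in the image of $f$.

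The real work is the uniform moment bound $\limsup_n E[|M_n^f|^r]<\infty$, where $r=2+\varepsilon_0$ for (i) and $r=1+\varepsilon_0$ for (ii). Since $X_1\in L^{0+}$, I would pick $s>0$ with $X_1\in L^s$; for any $r'$ slightly larger than $r$, Theorem \ref{thm:negative-var} yields $M_n^f\in L^{r'}$ for every $n$ beyond a threshold depending only on $r'$ and $s$. To promote this pointwise integrability into a uniform bound, I would decompose
$$E\left[|M_n^f|^r\right]=E\left[|M_n^f|^r;\,|Z_n-E[f(X_1)]|\le\delta\right]+E\left[|M_n^f|^r;\,|Z_n-E[f(X_1)]|>\delta\right],$$
where $Z_n:=\tfrac{1}{n}\sum_{j=1}^n f(X_j)$ and $\delta>0$ is small enough that $f^{-1}$ is bounded on the closed $\delta$-ball around $E[f(X_1)]$. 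The first summand is uniformly bounded by a constant depending only on $\delta$. For the second, a H\"older split with conjugate exponents $r'/r$ and its dual, combined with a concentration estimate for $Z_n$---exponential (Hoeffding) in case (a) since $f$ is bounded, polynomial (Chebyshev) in case (b) since $f(X_1)\in L^2$---sends the contribution to $0$, provided $E[|M_n^f|^{r'}]$ grows at most polynomially in $n$.

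With the uniform bound established, Theorem \ref{thm:gen-var-lim} delivers \eqref{eq:limvar-scaled-gen} and hence (i); the analogous decomposition with exponent $r=1+\varepsilon_0$ produces \eqref{eq:ass-integrable-L1}, and Proposition \ref{prop:var-infinite} (ii) then gives \eqref{eq:var-infinite} in (ii).

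The main obstacle is this upgrade from per-$n$ integrability to a uniform bound. Theorem \ref{thm:negative-var} guarantees $M_n^f\in L^{r'}$ for each sufficiently large $n$, but its statement gives no explicit dependence on $n$. Closing the argument requires revisiting its proof to extract a polynomial-in-$n$ estimate for $E[|M_n^f|^{r'}]$; this is subtlest in case (b), where only polynomial concentration of $Z_n$ is available and $f$ is unbounded, so the choice of $r'$ and the H\"older exponents must be balanced carefully against the slow decay of $P(|Z_n-E[f(X_1)]|>\delta)$.
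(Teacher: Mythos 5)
Your high-level skeleton agrees with the paper: reduce (i) to Theorem \ref{thm:gen-var-lim} and (ii) to Proposition \ref{prop:var-infinite}, then verify their hypotheses. Your checks that $E[|f(X_1)|^2]<\infty$ (bounded generator in case (a), standing assumption in (b)) and that $E[f(X_1)]\ne 0$ (via \eqref{eq:SLLN-cvx}) are correct and essentially the paper's.

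The gap, which you flag yourself, is the uniform moment bound \eqref{eq:ass-integrable} (resp.\ \eqref{eq:ass-integrable-L1}). Your proposed route---split on $\{|Z_n-E[f(X_1)]|\le\delta\}$, apply H\"older on the complement, and control the tail probability by Hoeffding or Chebyshev, all contingent on extracting a polynomial-in-$n$ growth estimate from the proof of Theorem \ref{thm:negative-var}---is left open, and in case (b) it is not clear it would close without careful balancing. But the detour is unnecessary: the proof of Theorem \ref{thm:negative-var} already yields, not merely $M_n^f\in L^r$ for each $n$, but the explicit product bound $\left|M_n^f+\alpha\right|^r\le C_{p}\prod_{j=1}^{n}|Y_j|^{r/n}$ (this is \eqref{eq:GH1}; in case $p=-1$, \eqref{eq:GH2} gives $E[|M_n^f+\alpha|^r]\le \textup{Im}(\alpha)^{-r}E[|Y_1|^{2r/n}]^n$). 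Taking expectations and using independence, $E[|M_n^f+\alpha|^r]\le C_p\,E[|Y_1|^{r/n}]^n$, and the key observation is the l'H\^opital identity $\lim_{\varepsilon\to+0}E[|Y_1|^\varepsilon]^{1/\varepsilon}=\exp(E[\log|Y_1|])$, which is finite under $X_1\in L^{0+}$ (together with boundedness of $|Y_1|$ away from $0$ in case (a), or $f(X_1)\in L^1$ in case (b)). Hence $E[|Y_1|^{r/n}]^n$ converges, so $\sup_n E[|M_n^f|^r]<\infty$ directly---no decomposition, no concentration estimate, and no H\"older balancing is needed. You would need to notice that the proof of Theorem \ref{thm:negative-var} gives a bound that is already uniform in $n$ once combined with the l'H\^opital limit; as written, your argument does not establish \eqref{eq:ass-integrable} and so does not prove the theorem.
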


This is an extension of \cite[Theorem 3.1]{Akaoka2021-1}. 
By Theorem \ref{thm:negative-var}, $\textup{Var}\left(M^f_n \right) < +\infty$ for sufficiently large $n$ if $X_1 \in L^{0+}$. 

Now we consider limit behaviors of the power means as $p$ tends to $0$ or $1$. 
We let 
\[ G^{(\alpha)}_n := \prod_{j=1}^{n} (X_j + \alpha)^{1/n}  - \alpha, \ \alpha \in \overline{\mathbb{H}}. \]
This is the quasi-arithmetic means of $X_1, \dots, X_n$ with generator $f(x) = \log(x+\alpha)$ and corresponds to the case that $p=0$. 

\begin{Prop}[geometric and harmonic means as parameter limits]\label{prop:negative-parameterlimit}
We see the following claims:\\
(i) Let $\alpha \in \overline{\mathbb H}$.
Let $n \ge 2$. 
Assume that $X_1 \in L^{1/n}$. 
Then, 
\[ \lim_{p \to -0} M^{ f^{(\alpha)}_p}_n = G^{(\alpha)}_n,     
\ \ \textup{ a.s. and  in } L^1.  \]
(ii) Let $\alpha \in \mathbb H$.
Let $n \ge 3$. 
Assume that $X_1 \in L^{2/3+}$. 
Then, 
\[ \lim_{p \to -1+0} M^{ f^{(\alpha)}_p}_n = M^{ f^{(\alpha)}_{-1}}_n,     
\ \ \textup{ a.s. and  in } L^1.  \]
\end{Prop}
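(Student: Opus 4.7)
The plan is to prove both (i) and (ii) in two steps: first establish almost-sure convergence by a direct pointwise computation, then upgrade to $L^1$ convergence by a uniform-integrability argument.

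For (i), a.s.\ convergence: I write
\[ M^{f^{(\alpha)}_p}_n + \alpha = \exp\!\Bigl(\tfrac{1}{p}\log S_n^{(p)}\Bigr),\qquad S_n^{(p)} := \tfrac{1}{n}\sum_{j=1}^n (X_j+\alpha)^p. \]
Since $X_j$ is continuous, $X_j+\alpha\ne 0$ a.s., so $L_j:=\log(X_j+\alpha)$ is well-defined. As $p\to 0$, $(X_j+\alpha)^p = \exp(pL_j)\to 1$ a.s., and a Taylor expansion gives $S_n^{(p)} = 1 + p\bar L_n + O(p^2)$ with $\bar L_n := \tfrac{1}{n}\sum_j L_j$. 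For $|p|$ small enough (depending on $\omega$), $S_n^{(p)}$ stays in a neighborhood of $1$, avoiding the branch cut, so $\tfrac{1}{p}\log S_n^{(p)}\to \bar L_n$ and $M^{f^{(\alpha)}_p}_n+\alpha\to \exp(\bar L_n) = \prod_j(X_j+\alpha)^{1/n}$. For (ii), the analogous argument is simpler: since $\alpha\in\mathbb H$, each $X_j+\alpha$ is bounded away from $0$ with strictly positive imaginary part, so $p\mapsto(X_j+\alpha)^p$ is continuous at $p=-1$, and $S_n^{(p)}$ stays in the strict lower half-plane with $\textup{Im}(S_n^{(p)})$ bounded away from $0$, making $p\mapsto(S_n^{(p)})^{1/p}$ continuous at $p=-1$.

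For the $L^1$ convergence I would apply Vitali's convergence theorem, reducing to uniform integrability of the family $\{M^{f^{(\alpha)}_p}_n\}$ as $p$ approaches the limit value. For (i) the target is to establish the pointwise bound
\[ \left|M^{f^{(\alpha)}_p}_n+\alpha\right|\le C(n,\alpha)\prod_{j=1}^n |X_j+\alpha|^{1/n} \]
uniformly in $p$ in a one-sided neighborhood of $0$; the right-hand side lies in $L^1$ by independence and $X_1\in L^{1/n}$. The estimate itself should come from the same mechanism underlying Theorem~\ref{thm:negative-var}(i), but with the constant tracked uniformly in $p$ near $0$. For (ii), since $n\ge 3$ and $X_1\in L^{2/3+}$, there exists $\varepsilon>0$ with $X_1\in L^{2(1+\varepsilon)/n}$, so Theorems~\ref{thm:negative-var}(i) and~(ii) together give $M^{f^{(\alpha)}_p}_n\in L^{1+\varepsilon}$ both on $(-1,0)$ and at $p=-1$; showing that this $L^{1+\varepsilon}$ norm is bounded uniformly for $p$ in a neighborhood of $-1$ then suffices for uniform integrability.

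The main obstacle is establishing these $p$-uniform bounds. For part (i) the danger is that $|S_n^{(p)}|$ can dip below $1$, making $|S_n^{(p)}|^{1/p}$ threaten to blow up as $p\to 0$. Pointwise one has $\log|S_n^{(p)}|^{1/p} = \textup{Re}(\bar L_n) + O(p\max_j|L_j|^2)$, but to turn this into an integrable dominant one must split $\Omega$ according to the size of $\max_j|L_j|$, using the $L^{1/n}$ assumption (through the expectation of $\prod_j|X_j+\alpha|^{1/n}$) to handle the tail. For (ii) the analogous issue is that $|S_n^{(p)}|$ could vanish by cancellation, but the hypothesis $\alpha\in\mathbb H$ keeps $|\textup{Im}(S_n^{(p)})|\ge c>0$ uniformly in $p$ in a neighborhood of $-1$, preventing this blow-up and enabling the uniform $L^{1+\varepsilon}$ estimate.
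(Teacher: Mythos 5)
Your plan for part (i) is close to the paper's: both use dominated/Vitali convergence with a pointwise dominating function of the form $C\prod_j|Y_j|^{1/n}$. The paper gets this dominant cleanly from the single elementary inequality $\cos(p\pi)\sum_j|Y_j|^p \le \bigl|\sum_j Y_j^p\bigr|$ (valid for $-1<p<0$, because $\arg Y_j^p\in(-\pi|p|,0]$), which combined with AM--GM on $\{|Y_j|^p\}$ gives $\bigl|M^{f^{(\alpha)}_p}_n+\alpha\bigr|\le(\cos(p\pi))^{1/p}\prod_j|Y_j|^{1/n}$, and $(\cos(p\pi))^{1/p}\to 1$ as $p\to-0$. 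Your proposed route of Taylor-expanding $\log|S_n^{(p)}|^{1/p}$ and splitting $\Omega$ by $\max_j|L_j|$ is a detour; you would still need to arrive at an integrable dominant, and the direct trigonometric bound is the short way there. So for (i) you have the right target and would plausibly get there, just less efficiently.

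For part (ii) there is a genuine gap. First, your reduction ``Theorems 4.2(i) and (ii) give $M^f_n\in L^{1+\varepsilon}$ on $(-1,0)$ and at $p=-1$; show this norm is bounded uniformly near $-1$'' does not work as stated, because the constants in the proof of Theorem~\ref{thm:negative-var}(i) degenerate as $p\to-1$: the key step there bounds $\textup{Im}(e^{i\varepsilon p}Y_j^p)$ below by $C_{p,1}|Y_j|^p$, where the interval in which $(\varepsilon+\theta_j)|p|$ lives opens up to all of $(0,\pi)$ as $p\to-1$, forcing $C_{p,1}\to 0$; so re-using that theorem cannot supply a $p$-uniform estimate. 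Second, the claim that $\alpha\in\mathbb H$ keeps $|\textup{Im}(S_n^{(p)})|$ bounded below by a constant $c>0$ is only true pointwise in $\omega$; as $|Y_j|\to\infty$ one has $|\textup{Im}(Y_j^p)|=|Y_j|^p|\sin(p\theta_j)|\to 0$, so no deterministic lower bound exists, and a random one does not by itself give uniform integrability. The paper resolves exactly this issue with the inequality $(-\textup{Im}(Y_1^p))^{1/p}\le C\,|Y_1|^{1-1/p}$ uniformly for $p\in(-1,-7/8)$, obtained by writing $(-\textup{Im}(Y_1^p))^{1/p}=\textup{Im}(\alpha)^{1/p}\,r_1^{1-1/p}\bigl(\sin\theta_1/\sin(-p\theta_1)\bigr)^{-1/p}$ and bounding the last ratio uniformly; combined with $|S_n^{(p)}|\ge\frac1n\sum_j(-\textup{Im}(Y_j^p))$, AM--GM, independence, and $X_1\in L^{2/3+}$ with $n\ge 3$ (so that $(1-1/p)/n$ stays strictly below the integrability exponent, leaving room for a uniform $L^{1+\delta}$ bound), this delivers uniform integrability. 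That explicit $p$-uniform estimate is the missing ingredient in your proposal.
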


\begin{Rem}\label{rmk:harmonic}
(i) In Theorem \ref{thm:negative-var} (iii) and Proposition \ref{prop:negative-parameterlimit} (ii) above, 
we need to assume that $\alpha \notin \mathbb R$ in general. 
Assume that $p=-1$. 
If $X_1$ follows the Cauchy distribution, then, $M^f_n$ also follows the same Cauchy distribution for every $n$.  
Furthermore, it is known that $\left(M^f_n\right)_n$ converges weakly to the Cauchy distribution as $n \to \infty$ 
if on an open interval $I$ containing $0$, $X_1$ has a density function $f_X$ which is H\"{o}lder continuous and positive on $I$.
See \cite[Example 3.8.4]{Durrett2019} or \cite[Theorem XVII.5.3]{Feller1966} for more details.
\cite{Lehmann1988} also gave related discussions.\\
(ii) Assume that $p=-1$ and $\alpha = 0$. 
If $X_1$ follows the standard {\it log-Cauchy distribution}, that is, the distribution of $X_1$ is equal to $\exp(Y)$ where $Y$ follows the standard Cauchy distribution, 
then, $X_1 \notin L^{0+}$ and furthermore $\min\{X_1, \cdots, X_n\} \notin L^{0+}$ for every $n \ge 1$. 
We remark that $M^f_n \ge \min\{X_1, \cdots, X_n\}$. 
We also see that $-\log \min\{X_1, \cdots, X_n\} / n \Rightarrow G, \ n \to \infty$, where $G$ is the distribution on $(0,\infty)$ with density function 
\[ g(x) = \frac{1}{x^2} \exp\left(-\frac{1}{x}\right), \ x > 0.  \]
We are not sure whether $M^f_n \notin L^{0+}$ or not for $\alpha \in \mathbb H$.\\
(iii) Under our assumption for $(p, \alpha)$, 
we see that for every $z_1, \cdots, z_n \in \mathbb{R}$, 
$$\sum_{j=1}^{n} (z_j + \alpha)^{p} \notin (-\infty, 0). $$ 
\end{Rem}

Before we proceed to the proofs, we check the generator $f$ satisfies Assumption \ref{ass-f}. 
If $p = -1$ and $\alpha \in \mathbb H$\footnote{We cannot let $\alpha \in \mathbb{R}$. See \cite[Example 1.2 (ii)]{Akaoka2021-1}.}, then, it is shown in \cite[Example 1.2 (ii)]{Akaoka2021-1}. 
If $p=0$ and $\alpha \in \overline{\mathbb H}$, then it is shown in \cite[Lemma 2.4]{Akaoka2021-1}. 

\begin{Prop}\label{prop:ass-neg}
Let $p \in (-1,0)$ and $\alpha \in \overline{\mathbb H}$. 
Let $f(z) := (z+\alpha)^p$  for $z \in \overline{\mathbb{H}} \setminus\{-\alpha\}$. 
Then, 
$f(\overline{\mathbb H} \setminus\{-\alpha\})$ is convex. 
In particular, for every $n \ge 1$ and $z_1, \dots, z_n \in \overline{\mathbb H}  \setminus\{-\alpha\}$, 
\[ f^{-1}\left(\frac{1}{n} \sum_{i=1}^{n} f(z_i) \right) \in \overline{\mathbb H} \setminus\{-\alpha\}. \] 
\end{Prop}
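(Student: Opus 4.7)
Since $f$ is an injective holomorphic map of $\overline{\mathbb H}\setminus\{-\alpha\}$ onto its image, the ``in particular'' claim is an immediate consequence of convexity: the average $\frac{1}{n}\sum_{i=1}^{n}f(z_i)$ then lies in the convex set $f(\overline{\mathbb H}\setminus\{-\alpha\})$, and $f^{-1}$ returns it to $\overline{\mathbb H}\setminus\{-\alpha\}$. After the translation $w = z+\alpha$, the main task becomes showing that the image $g(H_\alpha)$ is convex, where $g(w) := w^p$ and $H_\alpha := \{w\in\mathbb C : \textup{Im}(w)\ge\textup{Im}(\alpha)\}$ (with the origin removed when $\alpha\in\mathbb R$).

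The case $\alpha\in\mathbb R$ is essentially elementary. Writing $H_\alpha\setminus\{0\} = \{re^{i\theta} : r>0,\ \theta\in[0,\pi]\}$ in polar form gives $g(H_\alpha\setminus\{0\}) = \{se^{i\phi} : s>0,\ \phi\in[p\pi,0]\}$, an open sector of aperture $|p|\pi<\pi$. Since the arguments of any two points in such a sector differ by less than $\pi$, no chord joining them can pass through the apex $0$; the sector is therefore convex.

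The case $\alpha\in\mathbb H$ is the substantive one. Let $b := \textup{Im}(\alpha)>0$. The boundary $\partial H_\alpha = \{t+ib : t\in\mathbb R\}$ maps under $g$ to the smooth injective curve $\gamma(t) := (t+ib)^p$; the decay $|\gamma(t)| = |t+ib|^p\to 0$ as $|t|\to\infty$ (valid since $p<0$) makes $\gamma\cup\{0\}$ a simple closed curve, and the bound $|g(w)|=|w|^p\le b^p$ for $w\in H_\alpha$ identifies $g(H_\alpha)$ with the bounded component it encloses. The plan is then to establish convexity of this region via the signed curvature
\[ \kappa(t) = \frac{\textup{Im}\bigl(\gamma''(t)\,\overline{\gamma'(t)}\bigr)}{|\gamma'(t)|^{3}} \]
of $\gamma$. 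From $\gamma'(t) = p(t+ib)^{p-1}$, $\gamma''(t) = p(p-1)(t+ib)^{p-2}$, and the identity $w^{p-2}\overline{w}^{p-1} = |w|^{2p-4}\,\overline{w}$, a short calculation yields
\[ \textup{Im}\bigl(\gamma''(t)\,\overline{\gamma'(t)}\bigr) = p^{2}(1-p)\,b\,|t+ib|^{2p-4}, \]
which is strictly positive for $p\in(-1,0)$ and $b>0$. Consequently $\kappa$ has constant sign, making $\gamma$ a convex Jordan curve, so the region it bounds --- namely $g(H_\alpha)$ --- is convex.

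The main obstacle is this curvature computation: while it is short, one has to notice that the range $p\in(-1,0)$ is exactly what makes the factor $p^{2}(1-p)\,b$ positive. Identifying $g(H_\alpha)$ with the bounded component of $\mathbb C\setminus(\gamma\cup\{0\})$, which rests on the decay at infinity and the injectivity of $g$ on $\overline{\mathbb H}$, is routine but should be stated carefully.
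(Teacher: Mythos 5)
Your proof is correct and takes a genuinely different route from the paper's for the substantive case $\alpha\in\mathbb H$. Both you and the paper reduce to showing the boundary curve $f(\mathbb R)$ is convex, but the paper reparametrizes by $\theta = \arg(z+\alpha)\in(0,\pi)$, writes $\gamma(\theta) = \sin^{-p}(\theta)(\cos(p\theta),\sin(p\theta))$, $\nu(\theta) = (-\sin((p-1)\theta),\cos((p-1)\theta))$, observes this is a Legendre curve, and invokes a black-box convexity criterion of Fukunaga and Takahashi; you instead parametrize by the real coordinate $t$ and compute the signed curvature directly. Your algebra checks out: with $w=t+ib$ one has $w^{p-2}\overline{w^{p-1}}=|w|^{2p-4}\overline{w}$, hence $\textup{Im}\left(\gamma''\overline{\gamma'}\right)=p^2(1-p)b|w|^{2p-4}>0$, which is positive exactly because $p\in(-1,0)$ and $b>0$; the orientation is right since $g$ is conformal so $g(H_\alpha)$ sits on the left of $\gamma$. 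Your approach is more elementary and self-contained, which is a genuine improvement over citing \cite{Fukunaga2016}. The one thing you should spell out before calling $\gamma$ ``a convex Jordan curve'' is the corner at $0$: the two ends of $\gamma$ approach the origin with limiting arguments $0$ (as $t\to+\infty$) and $p\pi$ (as $t\to-\infty$), so $\gamma\cup\{0\}$ is a piecewise-smooth Jordan curve with a vertex of interior angle $|p|\pi<\pi$, not a regular $C^2$ closed curve as the standard ``constant-sign curvature $\Rightarrow$ convex'' theorem assumes. You should either cite a piecewise-smooth version of that theorem or simply observe that a supporting line exists at $0$ because the interior angle is less than $\pi$; together with the positive curvature on the smooth arc this gives convexity. (The paper's proof has the same latent issue, deferred to the cited reference.) Your handling of the $\alpha\in\mathbb R$ case and the ``in particular'' deduction from convexity are the same as the paper's and fine.
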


We remark that $\overline{\mathbb H} \setminus\{-\alpha\} = \overline{\mathbb H}$ if $\alpha \notin \mathbb{R}$. 

\begin{proof}
Let $\alpha \in \mathbb R$. 
Then, 
$$f(\overline{\mathbb H} \setminus\{-\alpha\}) = \left\{r\exp(i\theta) : r > 0, \theta \in [p \pi, 0] \right\}.$$ 
Since $0 > p > -1$, this set is convex. 

Let $\alpha \in \mathbb H$. 
We can assume that $\textup{Im}(\alpha) = 1$ without loss of generality. 
Then, 
\[ f(\mathbb R) = \left\{\exp(i p \theta) \sin^{-p}(\theta) : \theta \in (0, \pi) \right\}\] 
and $f(\overline{\mathbb H})$ is a bounded set surrounding by $f(\mathbb R)$. 
Let $\gamma(\theta) := \sin^{-p}(\theta) (\cos(p\theta), \sin(p\theta))$ and $\nu(\theta) := (-\sin((p-1)\theta), \cos((p-1)\theta))$. 
The pair $(\gamma, \nu)$ is called a Legendre curve, that is,  $\gamma^{\prime}(\theta) \cdot \nu(\theta) = 0$. 
See \cite{Fukunaga2016} for the definition.  
and we can apply \cite[Theorem 1.6]{Fukunaga2016} to $(\gamma, \nu)$, and we have the assertion. 
\end{proof}

Now we proceed to the proofs of Theorems \ref{thm:negative-var} and \ref{thm:neg-var-lim} and Proposition \ref{prop:negative-parameterlimit}. 
Hereafter, for ease of notation, we often let $Y_j := X_j + \alpha$. 
Assume that  $r_j > 0$ and $\theta_j \in (0, \pi)$ satisfy that $r_j \exp(i \theta_j) = Y_j$. 
In the following proofs, $C_{p,1}, C_{p,2}, \cdots$ are positive constants depending only on $p$.

\begin{proof}[Proof of Theorem \ref{thm:negative-var}]
(i) 
Let $\varepsilon := \dfrac{-(1+p)\pi}{4p} = \dfrac{(1-|p|)\pi}{4|p|} > 0.$
Then, 
\begin{equation}\label{eq:rotation} 
\left|\frac{1}{n} \sum_{j=1}^{n} Y_j^{p} \right| = \left|\frac{1}{n} \sum_{j=1}^{n} \exp(i\varepsilon p) Y_j^{p} \right|. 
\end{equation} 
Since $\exp(i\varepsilon p) Y_j^{p} = r_j^p \exp(i(\varepsilon + \theta_j)p)$
and $(\varepsilon + \theta_j)|p| \in \left[(1+p)\pi/4, (1-3p)\pi/4\right],$
we see that 
\[ \textup{Im}\left( \exp(i\varepsilon p) Y_j^{p} \right) \ge  C_{p,1} |Y_j|^p. \]
By this and \eqref{eq:rotation}, 
\[ \left|\frac{1}{n} \sum_{j=1}^{n} Y_j^{p} \right|^{r/p} \le  C_{p,2} \left|\frac{1}{n} \sum_{j=1}^{n} |Y_j|^{p} \right|^{r/p}. \]
By this and the geometric mean-harmonic mean inequality, 
\begin{equation}\label{eq:GH1}
\left|\frac{1}{n} \sum_{j=1}^{n} Y_j^{p} \right|^{r/p} \le C_{p,3} \prod_{j=1}^{n} |Y_j|^{r/n}. 
\end{equation}
Assertion (i) follows from this inequality.  

(ii) 
We remark that
\[ \left| \frac{1}{n} \sum_{j=1}^{n} Y_j^{-1} \right| = \dfrac{n}{\left| \sum_{j=1}^{n} \frac{\overline{Y_j}}{|Y_j|^2}\right|} \le \frac{n}{\textup{Im}(\alpha) \sum_{j=1}^{n} |Y_j|^{-2}}.  \]
By this and the geometric mean-harmonic mean inequality, 
we see that 
\begin{equation}\label{eq:GH2}
 E\left[ \left| \frac{1}{n} \sum_{j=1}^{n} Y_j^{-1} \right|^{-r} \right] 
\le \frac{1}{\textup{Im}(\alpha)^r} E\left[ \left( \frac{n}{\sum_{j=1}^{n} |Y_j|^{-2}} \right)^r  \right] 
\le \frac{1}{\textup{Im}(\alpha)^r} E\left[ |Y_j|^{2r/n} \right]^n < +\infty. 
\end{equation}
\end{proof}

\begin{proof}[Proof of Theorem \ref{thm:neg-var-lim}]
We show (i). 
We will apply Theorem \ref{thm:gen-var-lim}. 
Since $f(-\alpha) = 0$,  
we see that $0 \notin f\left(\overline{\mathbb H}  \setminus \{\alpha\}\right)$. 
Since $X_1$ is continuous, $P(X_1 \ne \alpha) = 1$. 
By \eqref{eq:SLLN-cvx}, if $f(X_1) \in L^1$, which is shown later, then, $E[f(X_1)] \in f\left(\overline{\mathbb H}  \setminus \{\alpha\}\right)$. 
Hence $E[f(X_1)] \ne 0$. 

If $\alpha \in \mathbb H$, then, $f(X_1) = (X_1 + \alpha)^p$ is bounded and hence is in $L^2$, 
regardless of any integrability assumptions of $X_1$. 
If $p > -1$, $\alpha \in \mathbb R$ and $(X_1 + \alpha)^{-1} \in L^{-1/2p}$. 
Then, $f(X_1) = (X_1 + \alpha)^p \in L^2$.  

Since $X_1 \in L^{0+}$, by using l'Hospital's theorem, 
\begin{equation}\label{eq:LHospital} 
\lim_{\varepsilon \to +0} E[|Y_1|^{\varepsilon}]^{1/\varepsilon} = \exp\left(E \left[\log |Y_1| \right]\right) < +\infty. 
\end{equation}

Then \eqref{eq:ass-integrable} follows from \eqref{eq:GH1} and \eqref{eq:LHospital} if $p = -1$, and \eqref{eq:GH2} and \eqref{eq:LHospital} if $p > -1$. 

We can show (ii) by applying Proposition \ref{prop:var-infinite}. 
We have \eqref{eq:ass-integrable-L1} by \eqref{eq:LHospital}. 
The rest of the proof goes in the same manner as in the above proof. 
\end{proof}

\begin{proof}[Proof of Proposition \ref{prop:negative-parameterlimit}] 
(i) We see that $Y_j \ne 0$ for every $j$ almost surely. 
Then, 
\[ \lim_{p \to -0} \left(\frac{1}{n} \sum_{j=1}^{n} Y_j^{p} \right)^{\frac{1}{p}} = \prod_{j=1}^{n} Y_j^{1/n}, \ \ \textup{ a.s.} \]
It holds that 
\[ \cos(p \pi) \sum_{j=1}^{n} \left|Y_j \right|^{p}  \le \left| \sum_{j=1}^{n} Y_j^{p} \right|.  \]
By this and the geometric mean-harmonic mean inequality, 
\[  \left| \frac{1}{n} \sum_{j=1}^{n} Y_j^{p} \right|^{\frac{1}{p}} \le (\cos(p \pi))^{\frac{1}{p}} \prod_{j=1}^{n} |Y_j|^{1/n}.  \]
By this, the assumption that $X_1 \in L^{1/n}$ and 
$\lim_{p \to -0} (\cos(p \pi))^{\frac{1}{p}}  = 1, $
we can apply the dominated convergence theorem and obtain (i). 

(ii) Since $\textup{Im}(Y_j^{p}) < 0$, 
we obtain that by the inequality of the geometric mean and the harmonic mean, 
\[ E\left[ \left| \frac{1}{n} \sum_{j=1}^{n} Y_j^{p} \right|^{\frac{1}{p}} \right] \le E\left[  \left| \frac{1}{n} \sum_{j=1}^{n} -\textup{Im}\left(Y_j^{p}\right) \right|^{\frac{1}{p}} \right] \le E\left[ \left(-\textup{Im}\left( Y_1^{p} \right) \right)^{\frac{1}{np}}\right]^n.  \] 
Since $\textup{Im}(Y_1) = \textup{Im}(\alpha) = r_1 \sin\theta_1$,  
we see that 
 \[ \left(-\textup{Im}\left( Y_1^{p} \right) \right)^{\frac{1}{p}} 
 = r_1 (\sin(-p\theta_1))^{\frac{1}{p}} 
 = \textup{Im}(\alpha)^{\frac{1}{p}} r_1^{1 - 1/p} \left(\frac{\sin \theta_1}{\sin(-p\theta_1)}\right)^{\frac{1}{-p}}. \] 
 We also obtain that 
 \[ \sup_{-1 < p < -7/8, \theta \in (4\pi/7, \pi)} \left(\frac{\sin \theta}{\sin(-p\theta)}\right)^{\frac{1}{-p}} \le 1, \]
 and, 
 \[ \sup_{-1 < p < -7/8, \theta \in (0, 4\pi/7]} \left(\frac{\sin \theta}{\sin(-p\theta)}\right)^{\frac{1}{-p}} < +\infty.\] 
 Hence, there exists a constant $C$ independent from $p$ such that 
 \[ \sup_{-1 < p < -7/8} \left(-\textup{Im}\left( Y_1^{p} \right) \right)^{\frac{1}{p}} \le C \left|  Y_1 \right|^{1-1/p}. \]
 By this and the assumptions that $n \ge 3$, $p > -1$, and $X_1 \in L^{2/3+}$, 
the family $\left\{  \left| \frac{1}{n} \sum_{j=1}^{n} Y_j^{p} \right|^{\frac{1}{p}}\right\}_{p \in (-1,0)}$ is uniformly integrable.  
By this and $$\lim_{p \to -1+0} \left( \frac{1}{n} \sum_{j=1}^{n} Y_j^{p} \right)^{\frac{1}{p}} = \frac{n}{\sum_{j=1}^{n} Y_j^{-1}},$$
we obtain an assertion (ii). 
\end{proof}

\section{Positive power means of random variables}\label{sec:posi}

We consider the case that $f(x) = (x+\alpha)^p$ for $0 < p < 1$ and $\alpha \in \overline{\mathbb{H}}$.  
Contrary to the negative power means, integrability is hard to be assured. 
We recall that $X_1, X_2 \cdots, $ be i.i.d.\ real-valued continuous random variables. 

\begin{Prop}[integrability]\label{prop:positive-integrability}
It holds that\\
(i) If $X_1 \in L^1$, then, $M^{f^{(\alpha)}_p}_n \in L^1$ for every $\alpha \in \overline{\mathbb H}$ and $n \ge 1$. \\
(ii) If $X_1 \notin L^1$, then, $M^{f^{(\alpha)}_p}_n \notin L^1$ for every $\alpha \in \overline{\mathbb H}$ and $n \ge 1$.
\end{Prop}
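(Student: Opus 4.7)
The plan is to reduce both assertions to two-sided estimates on $|S_n|^{1/p}$, where $S_n := \frac{1}{n}\sum_{j=1}^{n} Y_j^p$ and $Y_j := X_j + \alpha$. Since $M_n^f = S_n^{1/p} - \alpha$ and the principal branch gives $|S_n^{1/p}| = |S_n|^{1/p}$ (well-defined almost surely, since either $\textup{Im}(\alpha) > 0$ forces $\textup{Im}(S_n) > 0$, or else $\alpha \in \mathbb{R}$ and the continuity of $X_1$ rules out the only degenerate configuration), the triangle inequality supplies
\begin{equation*}
|S_n|^{1/p} - |\alpha| \le |M_n^f| \le |S_n|^{1/p} + |\alpha|.
\end{equation*}
The structural observation driving everything is that $Y_j \in \overline{\mathbb H}$ gives $\arg Y_j \in [0,\pi]$, so each $Y_j^p$ lies in the closed sector $\Sigma_p := \{re^{i\theta}: r \ge 0,\ \theta \in [0, p\pi]\}$, with $0 < p\pi < \pi$.

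For (i), I would combine the power mean inequality for $0 < p < 1$ (so that $M_p \le M_1$ applied to the moduli $|Y_j|$) with the triangle inequality:
\begin{equation*}
|S_n|^{1/p} \le \left(\frac{1}{n}\sum_{j=1}^{n} |Y_j|^p\right)^{1/p} \le \frac{1}{n}\sum_{j=1}^{n} |Y_j| \le \frac{1}{n}\sum_{j=1}^{n} |X_j| + |\alpha|.
\end{equation*}
Feeding this into the upper bound above yields $|M_n^f| \le \frac{1}{n}\sum_{j=1}^n |X_j| + 2|\alpha|$, which immediately gives $M_n^f \in L^1$ whenever $X_1 \in L^1$.

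For (ii), the key idea is a rotation trick producing a matching lower bound. Since every $Y_j^p$ lies in the sector $\Sigma_p$, multiplying by $e^{-ip\pi/2}$ shifts the sector to $\{|\arg z| \le p\pi/2\}$, which is contained in the open right half-plane because $p\pi/2 < \pi/2$. Hence each rotated summand has real part bounded below by
\begin{equation*}
\textup{Re}\bigl(e^{-ip\pi/2} Y_j^p\bigr) = |Y_j|^p \cos\!\left(p\arg Y_j - \tfrac{p\pi}{2}\right) \ge \cos\!\left(\tfrac{p\pi}{2}\right)|Y_j|^p,
\end{equation*}
and using $|S_n| \ge \textup{Re}(e^{-ip\pi/2} S_n)$ followed by $\sum_j |Y_j|^p \ge |Y_1|^p$ yields $|S_n| \ge \frac{\cos(p\pi/2)}{n}|Y_1|^p$. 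Taking $(1/p)$-th powers gives $|S_n|^{1/p} \ge c_{p,n}|Y_1|$ for some explicit $c_{p,n} > 0$, whence $|M_n^f| \ge c_{p,n}|X_1| - (c_{p,n} + 1)|\alpha|$, and $X_1 \notin L^1$ forces $M_n^f \notin L^1$.

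The only genuinely nontrivial step is pinning down the rotation angle in (ii): the bisector $p\pi/2$ of the sector $\Sigma_p$ is essentially forced by the requirement that every summand have a uniformly positive real part after rotation, and any naive choice would fail to yield a clean lower bound. Once this angle is fixed, both directions collapse to the power mean inequality together with an elementary trigonometric estimate, so I do not expect further technical obstacles.
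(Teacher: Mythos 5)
Your argument is correct, and it is worth spelling out where it aligns with and where it departs from the paper. For part (i) your use of the power mean inequality on the moduli is exactly the paper's ``easy consequence of the H\"older inequality,'' written out. Part (ii) genuinely departs: the paper's proof branches according to which half-line of $X_1 + \textup{Re}(\alpha)$ carries the infinite first absolute moment, in one branch restricting to the positive-probability event that every sample lands in the right half-plane (so that every $\textup{Re}(Y_j^p) \ge 0$ and the real parts add up), and in the other branch exploiting that $\textup{Im}(Y_j^p) \ge 0$ holds unconditionally. Your single clockwise rotation by $p\pi/2$ recenters the sector $\Sigma_p$ on the positive real axis and yields the unconditional pointwise bound $|S_n| \ge n^{-1}\cos(p\pi/2)\,|Y_1|^p$; this collapses the two cases into one and requires neither a restriction to a positive-probability event nor any knowledge of which tail of $X_1$ is fat. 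The device is the same sector-rotation idea the paper itself uses in the negative-power integrability proof (Theorem~\ref{thm:negative-var}(i), with a different rotation angle and with the imaginary-part projection), so your version has the appealing side effect of putting the two signs of $p$ on a uniform footing where the paper treats the positive case in a more ad hoc manner. One small point that deserves to be made explicit: $S_n$ is a.s.\ nonzero and off the branch cut because $\Sigma_p$ is a proper closed cone (it contains no antipodal pair), so a vanishing sum would force every $Y_j = 0$, an event of probability zero by the continuity of $X_1$; your parenthetical gestures at this, and your lower bound $|S_n| \ge n^{-1}\cos(p\pi/2)\,|Y_1|^p$ actually proves it.
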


Therefore, it is natural to consider some truncations for positive power means of random variables,  in order to make them integrable when $X_1 \notin L^1$. 

\begin{Thm}[integrability of truncated sums]\label{thm:truncated-integrability}
Let $n \ge 1$ and $X_1 \in L^{\max\{p,1-p\}}$. 
Let $\alpha \in \overline{\mathbb H}$.
Then,  
\begin{equation}\label{eq:trancated-L1}
n^{1/p} M^{f^{(\alpha)}_p}_n - n M^{f^{(\alpha)}_1}_n \in L^1.
\end{equation}  
\end{Thm}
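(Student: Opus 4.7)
Setting $Y_j := X_j + \alpha$, a direct computation gives
$$n^{1/p} M^{f^{(\alpha)}_p}_n - n M^{f^{(\alpha)}_1}_n = W + (n - n^{1/p})\alpha, \qquad W := \left(\sum_{j=1}^n Y_j^p\right)^{1/p} - \sum_{j=1}^n Y_j,$$
so it suffices to show $W \in L^1$. My plan is to establish the deterministic pointwise bound
$$|W| \leq C_{n,p,\alpha} \sum_{1 \leq j \neq k \leq n} |Y_j|^{1-p} |Y_k|^p \qquad (*)$$
almost surely; granting this, the independence of the $Y_j$ together with the hypothesis $X_1 \in L^{\max\{p,1-p\}}$ (which yields $|Y_1| \in L^p \cap L^{1-p}$) immediately gives $E[|W|] \leq C_{n,p,\alpha}\, n(n-1)\, E[|Y_1|^{1-p}]\, E[|Y_1|^p] < \infty$.

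I would prove $(*)$ by induction on $n$. For the base case $n=2$, the map $(Y_1, Y_2) \mapsto (Y_1^p + Y_2^p)^{1/p} - (Y_1 + Y_2)$ is homogeneous of degree one, so assuming $|Y_1| \geq |Y_2|$ and factoring reduces the matter to the one-variable estimate
$$|\phi(w)| \leq C_p |w|^p \text{ for } |w| \leq 1, \qquad \phi(w) := (1 + w^p)^{1/p} - 1 - w.$$
Since $\phi(0) = 0$ and a Taylor expansion at $0$ gives $\phi(w) = w^p/p - w + O(|w|^{2p})$, the ratio $|\phi(w)|/|w|^p$ has a finite limit $1/p$ at $w=0$ (using $|w| \leq |w|^p$ for $|w| \leq 1, p < 1$), and is bounded on the rest of the compact disk by continuity. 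For the inductive step, set $u_{n-1} := (\sum_{j=1}^{n-1} Y_j^p)^{1/p}$; since $\sum_{j=1}^{n-1} Y_j^p$ lies in the closed convex sector $\{\arg z \in [0, p\pi]\}$, we have $u_{n-1} \in \overline{\mathbb{H}}$ and $u_{n-1}^p = \sum_{j=1}^{n-1} Y_j^p$. Applying the base case to the pair $(u_{n-1}, Y_n)$, then invoking the inductive hypothesis on $u_{n-1} - \sum_{j=1}^{n-1} Y_j$, and finally bounding $|u_{n-1}|^s \leq C_{n,p}\sum_{j=1}^{n-1} |Y_j|^s$ for $s \in \{p, 1-p\}$ (via $|u_{n-1}|^s \leq (\sum_{j \leq n-1} |Y_j|^p)^{s/p}$ combined with subadditivity of $x \mapsto x^{s/p}$ when $s/p \leq 1$, or Jensen when $s/p > 1$), gives $(*)$ for $n$.

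The main obstacle is the careful handling of principal branches of $w \mapsto w^p$ in the base case: when $\alpha \in \mathbb{R}$ the $Y_j$ are real and can be negative, so $w = Y_2/Y_1$ may land on the branch cut $(-\infty, 0)$ and the formal identity $Y_1^p + Y_2^p = Y_1^p (1 + (Y_2/Y_1)^p)$ needs to be checked against the principal choices (in particular, no factor of $e^{2\pi i/p}$ is introduced). The situation is cleanest for $\alpha \in \mathbb{H}$ strict, where the $Y_j$ lie in the open upper half-plane and the identity holds uniformly; the boundary case $\alpha \in \mathbb{R}$ can be recovered either by directly working with the correct branch on the complement of a null set (the exceptional configurations have probability zero under the continuous law of $X_1$) or by a continuity argument in $\alpha$.
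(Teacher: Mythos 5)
Your overall strategy is the same as the paper's: prove a two-variable Taylor-type bound for $(x^p+y^p)^{1/p}-(x+y)$, then induct on $n$ by peeling off $Y_n$ against $u_{n-1}=(\sum_{j<n}Y_j^p)^{1/p}$; the paper's Lemma \ref{lem:positive-fundamental} is exactly your base-case bound, with $\max$ in place of your sum. The inductive step is sound, as is the observation that $u_{n-1}\in\overline{\mathbb H}$ with $u_{n-1}^p=\sum_{j<n}Y_j^p$.

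The gap is in your base-case reduction, and you have misdiagnosed its severity. Dividing by the larger-modulus variable requires the identity
\[
\left(Y_1^p + Y_2^p\right)^{1/p} \;=\; Y_1\bigl(1+w^p\bigr)^{1/p}, \qquad w := Y_2/Y_1,
\]
with all powers taken in the principal branch. When $\alpha\in\mathbb R$ and the $Y_j$ have opposite signs, this identity fails: take $Y_1<0<Y_2$ with $|Y_1|\ge|Y_2|$, so $w<0$. Then $w^p$ has argument $+p\pi$, whereas $Y_2^p/Y_1^p$ has argument $-p\pi$; concretely with $p=1/2$, $Y_1=-2$, $Y_2=1$, one finds $(Y_1^{1/2}+Y_2^{1/2})^2=-1+2\sqrt 2\,i$ but $Y_1(1+w^{1/2})^2=-1-2\sqrt 2\,i$. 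Your claim that ``the exceptional configurations have probability zero under the continuous law of $X_1$'' is therefore wrong: the event $\{Y_1<0<Y_2\}$ has positive probability whenever the law of $X_1$ charges both $(-\infty,-\alpha)$ and $(-\alpha,\infty)$, as it does for the Cauchy distribution to which the theorem is then applied. The paper avoids this entirely by not dividing by a complex number: it first assumes WLOG $\arg x\le\arg y$ (not $|x|\ge|y|$), rotates both by $e^{-i\arg x}$ so that $x>0$ and $y$ stays in $\overline{\mathbb H}$, then scales by the positive real $|x|$ to reduce to $x=1$; this forces a two-case analysis $|y|\le 1$ and $|y|\ge 1$ (using the substitution $z=y^{-p}$ and the transfer identity \eqref{eq:transfer} in the second case), which is the price for keeping all branches well-behaved, whereas your single-case $|w|\le 1$ reduction is an illusion produced by the invalid normalization.

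Your fallback suggestion, a continuity argument sending $\alpha'\in\mathbb H$ down to $\alpha\in\mathbb R$, does work: the bound for $\alpha'\in\mathbb H$ is legitimate (for $Y_1,Y_2\in\mathbb H$ strictly, $\arg(Y_2/Y_1)\in(-\pi,\pi)$ and the factorization identity holds), $W(\alpha')\to W(\alpha)$ pointwise a.s.\ since the relevant arguments stay in $[0,p\pi]$ throughout, and the right-hand side of $(*)$ converges as well. But for this you need the constant to be uniform in $\alpha$, so you should write $C_{n,p}$ rather than $C_{n,p,\alpha}$; tracing your own argument shows the constant indeed depends only on $n$ and $p$, matching the paper's $C_p$. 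In summary: same method as the paper, with a repairable but genuine flaw in the base-case normalization and an incorrect null-set claim; the cleanest repair is to adopt the paper's rotation-and-scaling reduction.
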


The following corresponds to Proposition \ref{prop:negative-parameterlimit} (i). 

\begin{Thm}[geometric mean as a parameter limit of truncated sum]\label{thm:positive-parameterlimit}
Let $n \ge 2$. 
Let $\alpha \in \overline{\mathbb H}$. 
Assume that 
\begin{equation}\label{ass-fine}
E\left[ \left|f_{1-\eta}^{(\alpha)}(X_1) \right| \right] = E\left[ \left|X_1 + \alpha\right|^{1-\eta}\right] = O\left(\eta^{-\ell}\right), \ \ \eta \to +0 
\end{equation}
for some $\ell > 0$. 
Then, 
\begin{equation}\label{eq:positive-parameterlimit-gen}
\lim_{p \to +0}  \frac{n^{1/p} M^{f^{(\alpha)}_p}_n - n M^{f^{(\alpha)}_1}_n}{n^{1/p} - n} = G_n^{(\alpha)}, \ \ \textup{ a.s. and in $L^1$}.  
\end{equation}
\end{Thm}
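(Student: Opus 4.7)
Set $Y_j := X_j + \alpha$, $S_p := \sum_{j=1}^n Y_j^p$, and $G := \prod_{j=1}^n Y_j^{1/n}$. A direct computation gives
$$\frac{n^{1/p} M^{f^{(\alpha)}_p}_n - n M^{f^{(\alpha)}_1}_n}{n^{1/p}-n} - G^{(\alpha)}_n = A_p - G, \qquad A_p := \frac{S_p^{1/p} - \sum_j Y_j}{n^{1/p}-n},$$
so the theorem reduces to proving $A_p \to G$ both almost surely and in $L^1$ as $p \to 0^+$.

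\textbf{Almost sure convergence.} Fix $\omega$ and Taylor-expand $Y_j^p = 1 + p\log Y_j + O_\omega(p^2)$, then sum to get $S_p/n = 1 + pL/n + O_\omega(p^2)$ with $L := \sum_j \log Y_j$. Passing through the principal logarithm and then the exponential yields
$$\frac{S_p^{1/p}}{n^{1/p}} = \exp\!\bigl(\tfrac{L}{n} + O_\omega(p)\bigr) \longrightarrow G.$$
Since $n^{1/p} \to \infty$ while $\sum_j Y_j$ is a fixed finite constant for each $\omega$, substitution into the definition of $A_p$ gives $A_p \to G$ almost surely.

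\textbf{$L^1$ convergence.} Both $A_p$ (by Theorem \ref{thm:truncated-integrability}, which applies for $p < 1/2$ because the hypothesis provides $X_1 \in L^{1-\eta}$ for every $\eta > 0$) and $G$ (since $E|G| = E[|Y_1|^{1/n}]^n$ is finite for $n \ge 2$) belong to $L^1$, so by Vitali's theorem it suffices to establish uniform integrability of the family $\{A_p\}_{p \in (0, p_0)}$ for some small $p_0$. The plan is to localize on $\Omega_p := \{\max_j|Y_j| \le e^{1/p^{1/4}}\}$: on $\Omega_p$ each $|p\log Y_j|$ is $O(p^{3/4})$, so a quantitative version of the Taylor bound above yields $|S_p^{1/p}/n^{1/p} - G|\mathbf{1}_{\Omega_p} \le Cp^{1/2}|G|$, while $|\sum_j Y_j|\mathbf{1}_{\Omega_p}/(n^{1/p}-n) \le ne^{1/p^{1/4}}/(n^{1/p}-n)$ is exponentially small, giving $E[|A_p - G|\mathbf{1}_{\Omega_p}] = O(p^{1/2})$. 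On $\Omega_p^c$, the Markov bound $P(|Y_1|>t) \le E[|Y_1|^{1-\eta}]/t^{1-\eta} \le C\eta^{-\ell}/t^{1-\eta}$, optimized in $\eta$ via the rate hypothesis, gives the polynomial-logarithmic tail $P(|Y_1|>t) = O((\log t)^\ell/t)$, so $P(\Omega_p^c)$ decays faster than any polynomial in $p$; combining this with the a.s.\ bound $|A_p| \le \tfrac{2}{n}\sum_j|Y_j|$ (from $|S_p^{1/p}| \le n^{1/p-1}\sum_j|Y_j|$ via the power-mean inequality) and a careful partition of $\Omega_p^c$ by the index of the largest $|Y_i|$, one estimates the resulting truncated moments with the rate hypothesis to conclude $E[|A_p - G|\mathbf{1}_{\Omega_p^c}] \to 0$.

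\textbf{Main obstacle.} The delicate step is the $L^1$ contribution on $\Omega_p^c$. Since the hypothesis permits $E|Y_1|=+\infty$, the natural decomposition $A_p - G = -(\sum_jY_j - nG)/(n^{1/p}-n) + (\text{Taylor remainder})$ typically has summands that belong only to $L^{1-\eta}$; the integrability of $A_p - G$ itself arises from cancellation between them, and no uniform $L^{1+\delta}$ bound on $|A_p|$ is available, so a direct H\"older argument fails. The quantitative polynomial rate $E[|Y_1|^{1-\eta}] = O(\eta^{-\ell})$ --- strictly stronger than mere membership in $\bigcap_{\eta>0} L^{1-\eta}$ --- is precisely what allows the choice $\eta = \eta(p) \to 0$ needed to generate tail estimates sharp enough to absorb the exponentially large denominator $n^{1/p}-n$ and to upgrade $L^{1-\eta}$ control into uniform integrability.
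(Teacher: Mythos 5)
Your proposal takes a genuinely different route from the paper: you aim for a direct localization--Taylor argument, whereas the paper first treats $p=1/k$ via the multinomial expansion of $\bigl(\sum_j Y_j^{1/k}\bigr)^k$ together with a large-deviation bound on the multinomial coefficients, and then interpolates to general $p$ through the decomposition $S_{p,n}^{1/p}-S_{pk,n}S_{1-pk,n}$. Your approach would be cleaner if it closed, but as sketched it has a genuine gap at exactly the point you flag as the ``main obstacle.''

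The problem is the bound you propose to use on $\Omega_p^c$. The inequality $|A_p|\le\frac{2}{n}\sum_j|Y_j|$ is correct, but it comes from the triangle inequality $|S_{p,n}^{1/p}-\sum_jY_j|\le|S_{p,n}^{1/p}|+|\sum_jY_j|$, which throws away all cancellation between the two terms. Under the hypothesis \eqref{ass-fine} one only knows $X_1\in L^{1-\eta}$ for all $\eta>0$, and $E|Y_1|$ may be infinite (e.g.\ the Cauchy case); then $E\bigl[\sum_j|Y_j|\,\mathbf{1}_{\Omega_p^c}\bigr]$ is already $+\infty$ because $E\bigl[|Y_1|\,\mathbf{1}_{\{|Y_2|>t\}}\bigr]=E|Y_1|\cdot P(|Y_2|>t)=+\infty$, so no amount of tail smallness of $P(\Omega_p^c)$ or partitioning by the index of the largest $|Y_i|$ can rescue the estimate. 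What you actually need on $\Omega_p^c$ is a pointwise bound on $|S_{p,n}^{1/p}-\sum_jY_j|$ that exhibits the cancellation of the linear term. This is precisely the content of the paper's Lemma \ref{lem:positive-fundamental}, which yields (after iterating by induction on $n$) a bound by sums of products of the $|Y_j|^{a_j}$ with each exponent $a_j\le\max\{p,1-p\}<1$, so that the dominating function is integrable by \eqref{ass-fine}. Without such a sub-linear cancellation estimate, the uniform-integrability step cannot be completed.

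A secondary point: your Taylor estimate $|S_{p,n}^{1/p}/n^{1/p}-G|\mathbf{1}_{\Omega_p}\le Cp^{1/2}|G|$ requires $|\log Y_j|=O(p^{-1/4})$, but $\Omega_p$ only caps $|Y_j|$ from above. When $\alpha\in\mathbb{R}$ the quantity $|Y_j|$ can be arbitrarily small (the $X_j$ are continuous), so $\log|Y_j|$ can be arbitrarily negative and the claimed $O(p^{1/2})$ rate is not established; you would also need to truncate away $\{\min_j|Y_j|\textrm{ small}\}$ and separately show that region contributes negligibly. The a.s.\ convergence part of your argument is fine and agrees with the paper; the real work in this theorem is entirely in the $L^1$ convergence, and that is where the paper's key lemma (or an equivalent quantitative cancellation estimate with a bound on $\sup_{p}C_p$ over compact $p$-ranges as in Remark \ref{rem:unif-bdd}) is indispensable.
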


The assumption \eqref{ass-fine} is satisfied by the Cauchy distribution.  
See Section \ref{sec:Cauchy} for more details.

Before we proceed to the proofs, we check the generator $f$ satisfies Assumption \ref{ass-f}. 

\begin{Prop}\label{prop:ass-posi} 
Let $p \in (0,1)$ and $\alpha \in \overline{\mathbb H}$. 
Let $f(z) := (z+\alpha)^p$ for $z \in \overline{\mathbb{H}}$. 
Then, 
$f(\overline{\mathbb H})$ is convex. 
In particular, for every $n \ge 1$ and $z_1, \dots, z_n \in \overline{\mathbb H}$, 
\[ f^{-1}\left(\frac{1}{n} \sum_{i=1}^{n} f(z_i) \right) \in \overline{\mathbb H}. \]
\end{Prop}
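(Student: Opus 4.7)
The plan is to follow the two-case strategy of Proposition~\ref{prop:ass-neg}, splitting according to whether $\alpha\in\mathbb{R}$ or $\alpha\in\mathbb{H}$. The key initial observation is that $\overline{\mathbb{H}}+\alpha=\{w\in\mathbb{C}:\textup{Im}(w)\ge\textup{Im}(\alpha)\}$ depends only on $\textup{Im}(\alpha)$, and after the substitution $w=z+\alpha$ one has $f(\overline{\mathbb{H}})=\{w^p:\textup{Im}(w)\ge\textup{Im}(\alpha)\}$.

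In the boundary case $\alpha\in\mathbb{R}$, $\overline{\mathbb{H}}+\alpha=\overline{\mathbb{H}}$, and since the principal branch gives $\arg(w)\in[0,\pi]$ for $w\in\overline{\mathbb{H}}$,
\begin{equation*}
f(\overline{\mathbb{H}})=\{re^{i\phi}:r\ge 0,\ \phi\in[0,p\pi]\}
\end{equation*}
is a closed sector of opening angle $p\pi<\pi$, hence convex.

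In the interior case $\alpha\in\mathbb{H}$, write $b:=\textup{Im}(\alpha)>0$ and substitute $w=bw'$: the identity $f(\overline{\mathbb{H}})=b^p\{(w')^p:\textup{Im}(w')\ge 1\}$ shows that, up to the convexity-preserving dilation by $b^p>0$, we may assume $\alpha=i$. The boundary curve $\{(t+i)^p:t\in\mathbb{R}\}$ then admits the polar parametrization
\begin{equation*}
\gamma(\theta)=\sin^{-p}(\theta)(\cos(p\theta),\sin(p\theta)),\qquad \theta\in(0,\pi),
\end{equation*}
obtained by writing $t+i=\csc(\theta)e^{i\theta}$ with $\theta=\textup{arccot}(t)$. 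Setting $\nu(\theta)=(-\sin((p-1)\theta),\cos((p-1)\theta))$ and computing $\gamma'(\theta)=-p\sin^{-p-1}(\theta)e^{i(p-1)\theta}$ directly, one verifies $\gamma'(\theta)\cdot\nu(\theta)=0$, so that $(\gamma,\nu)$ is a Legendre curve in the sense of \cite{Fukunaga2016}. Invoking \cite[Theorem 1.6]{Fukunaga2016}, exactly as in the proof of Proposition~\ref{prop:ass-neg}, then yields the convexity of $f(\overline{\mathbb{H}})$.

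The main obstacle I anticipate is the application of \cite[Theorem 1.6]{Fukunaga2016} to the present \emph{unbounded} region: for $p\in(0,1)$ the factor $\sin^{-p}(\theta)$ blows up at both endpoints of $(0,\pi)$, so $\gamma$ escapes to infinity along the rays $\arg=0$ and $\arg=p\pi$, and $f(\overline{\mathbb{H}})$ is the unbounded component bounded by $\gamma$. This contrasts with the bounded convex region arising in Proposition~\ref{prop:ass-neg}. One therefore has to identify the correct side of $\gamma$ and verify that the sign of the Legendre curvature (equivalently, the strict monotonicity of $\arg\gamma'(\theta)=\pi+(p-1)\theta$ in $\theta$) still forces convexity on that side. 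Once convexity of $f(\overline{\mathbb{H}})$ is established, the final claim $f^{-1}\bigl(\frac{1}{n}\sum_{i=1}^{n}f(z_i)\bigr)\in\overline{\mathbb{H}}$ follows immediately, since $\frac{1}{n}\sum_{i=1}^{n}f(z_i)$ is a convex combination of points in $f(\overline{\mathbb{H}})$ and $f^{-1}$ is well defined on $f(\overline{\mathbb{H}})$ by the injectivity of $f$ on $\overline{\mathbb{H}}$.
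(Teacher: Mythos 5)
There is a real gap in your interior case $\alpha\in\mathbb{H}$, and you correctly sense it but do not resolve it. The paper's Proposition \ref{prop:ass-neg} invokes \cite[Theorem~1.6]{Fukunaga2016} on a \emph{simple closed} frontal: for $p\in(-1,0)$ the radial factor is $\sin^{|p|}(\theta)$, which vanishes at $\theta\to 0^+$ and $\theta\to\pi^-$, so $\gamma$ closes up at the origin and bounds a compact region. For $p\in(0,1)$ the factor $\sin^{-p}(\theta)$ diverges at both endpoints, $\gamma$ is an open unbounded arc, and $f(\overline{\mathbb{H}})$ is the unbounded region it cuts out. A theorem about simple closed frontals simply does not apply; "identifying the correct side and verifying the Legendre curvature forces convexity" is not a routine adaptation — it is exactly the content one would have to supply, and you leave it unproved. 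This is why the paper's own proof of Proposition \ref{prop:ass-posi} abandons the Legendre-curve route entirely in the interior case. After reducing to $D_1=\{r e^{ip\theta}:\theta\in(0,\pi),\ r\ge\sin^{-p}\theta\}$, the paper writes $D_1$ explicitly as the intersection of half-planes
\[
H_\theta=\left\{(x,y):\sin((1-p)\theta)\,x+\cos((1-p)\theta)\,y\ge\sin^{1-p}\theta\right\},\qquad \theta\in(0,\pi),
\]
and proves $D_1=\bigcap_{\theta\in(0,\pi)}H_\theta$ by two inclusions. The forward inclusion reduces, via the angle-sum formula, to $\sin((1-p)\theta+p\theta_0)\ge\sin^{1-p}(\theta)\sin^{p}(\theta_0)$, which is the concavity of $\log\sin$ (the same lemma used for the geometric-mean case); the reverse inclusion comes from testing $\theta=\theta_1$. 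Convexity then falls out for free, as an intersection of half-planes is convex — and this argument is insensitive to unboundedness. Your boundary case $\alpha\in\mathbb{R}$ (convex sector of opening $p\pi<\pi$) matches the paper, your reductions to $\alpha=i$ and $D_1$ are fine, and your Legendre-curve computation $\gamma'(\theta)\cdot\nu(\theta)=0$ is correct, but the proof is not complete without replacing the appeal to Fukunaga--Takahashi by something that actually handles an unbounded frontal, and the half-plane representation is the cleanest way to do so.
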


We recall that it is assumed that $z^p = 0$ if $0 \le p \le 1$ and $z = 0$. 

\begin{proof}
Let $\alpha \in \mathbb R$. 
Then, 
$$f(\overline{\mathbb H}) = \left\{r\exp(i\theta) : r \ge 0, \theta \in [0, p\pi] \right\}.$$
Since $0 < p < 1$, this set is convex. 

Assume that $\alpha \in \mathbb{H}$. 
Let 
$$D_s := \left\{t \exp(ip\theta) \sin^{-p}\theta | \theta \in (0, \pi), t \ge s \right\}$$ 
for $s \ge 0$. 
Then, 
$f(\overline{\mathbb H}) = D_{\textup{Im}(\alpha)^p}$. 
Hence it suffices to show that $D_s$ is convex for every $s > 0$. 
Since $D_s = sD_1$, it suffices to deal with the case that $s=1$. 

For $\theta \in (0, \pi)$, 
\[ H_{\theta} := \left\{(x,y) \middle| \sin((1-p)\theta)x+ \cos((1-p)\theta)y \ge \sin^{1-p} \theta \right\}.  \]
Now it suffices to show that $D_1 = \cap_{\theta \in (0,\pi)} H_{\theta}$.  

We first show that $D_1 \subset H_{\theta}$ for every $\theta \in (0,\pi)$. 
Let $t_0 \ge 1$ and $\theta_0 \in (0,\pi)$. 
Then, by the angle sum formula for trigonometric functions, $t_0 \exp(ip\theta_0) \in H_{\theta}$ if and only if $t_0 \sin((1-p)\theta + p\theta_0) \ge \sin^{1-p}(\theta)  \sin^p (\theta_0)$. 
The latter inequality follows from $t_0 \ge 1$ and the fact that $\log(\sin(\xi))$ is concave\footnote{This was also used in the proof of \cite[Lemma 2.4]{Akaoka2021-1}.} as a function of $\xi$. 

We second show that $\cap_{\theta \in (0,\pi)} H_{\theta} \subset D_1$. 
Let $(x,y) \in \cap_{\theta \in (0,\pi)} H_{\theta}$. 
By considering the cases that $\theta$ is sufficiently close to $0$ and $\pi$ respectively, 
$x+yi = t_1 \exp(ip\theta_1) \in \cup_{s > 0} D_s$ for some $t_1 > 0$ and $\theta_1 \in (0,\pi)$. 
Since $t_1 \sin((1-p)\theta + p\theta_1) \ge \sin^{1-p}(\theta)  \sin^p (\theta_1)$ holds  for $\theta \in (0,\pi)$, in particular, $\theta = \theta_1$, 
we see that $t_1 \ge 1$. 

Thus we see  that $D_1 = \cap_{\theta \in (0,\pi)} H_{\theta}$.  
\end{proof}

For ease of notation, we let 
$S_{p,n} := \sum_{j=1}^{n} Y_j^{p}$. 

\begin{proof}[Proof of Proposition \ref{prop:positive-integrability}]
Assertion (i) is an easy consequence of the H\"older inequality. 

We show (ii). 
We first consider the case that $E\left[|Y_1|, \ X_1 + \textup{Re}(\alpha) \ge 0\right] = +\infty$. 
Then, we obtain that if $X_j + \textup{Re}(\alpha) \ge 0$, then, 
$$\textup{Re}\left( Y_j^{p} \right) = \left| Y_j \right|^{p} \cos\left(\frac{p\pi}{2}\right) \ge 0.$$
We also see that $P\left(X_1 + \textup{Re}(\alpha) \ge 0\right) > 0$. 
Hence, 
\[ E\left[ \left|S_{p,n} \right|^{\frac{1}{p}} \right] \ge P\left(X_1 + \textup{Re}(\alpha) \ge 0\right)^{n-1} \cos^{\frac{1}{p}}\left(\frac{p\pi}{2}\right)  E\left[ \left|Y_1 \right|, \ X_1 + \textup{Re}(\alpha) \ge 0\right] = +\infty.  \]

We second consider the case that $E\left[|Y_1|, \ X_1 + \textup{Re}(\alpha) < 0\right] = +\infty$. 
$X_j + \textup{Re}(\alpha) < 0$ if and only if $\theta_j > \pi/2$, and, 
$$\textup{Im}\left( Y_j^p \right) = r_j^p \sin(p \theta_j) \ge 0. $$
Hence, 
\[ E\left[ \left|S_{p,n} \right|^{\frac{1}{p}} \right] \ge E\left[ \left|\sum_{j=1}^{n} |Y_j|^{p} \sin(p \theta_j)\right|^{\frac{1}{p}} \right] \ge E\left[ |Y_1| \sin^{\frac{1}{p}}(p \theta_1), \ X_1 + \textup{Re}(\alpha) < 0 \right] \]
\[= E\left[ |Y_1| \sin^{\frac{1}{p}}(p \theta_1), \ \theta_1 > \pi/2 \right] \ge \inf_{\theta \in [\pi/2, \pi)} \sin^{\frac{1}{p}}(p \theta) E\left[ |Y_1|, \ X_1 + \textup{Re}(\alpha) < 0 \right] = +\infty. \]
\end{proof}

\begin{proof}[Proof of Theorem \ref{thm:truncated-integrability}]

We show this by induction in $n$. 
The assertion is obvious for $n=1$. 
Assume that the assertion holds for $n=m$. 
Now we show the assertion holds for $n=m+1$. 
 
\begin{Lem}\label{lem:positive-fundamental}
For every $p \in (0,1)$, there exists a positive constant $C_p$ such that 
for every $x, y \in \overline{\mathbb H}$, 
\[ \left|(x^p + y^p)^{\frac{1}{p}} - (x+y)\right| \le C_p \max\left\{|x|^p |y|^{1-p}, |x|^{1-p} |y|^p \right\}. \]
\end{Lem}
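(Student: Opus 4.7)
The plan is to reduce the inequality, via the substitution $a := x^p$, $b := y^p$, to an estimate for the principal-branch power $z \mapsto z^{1/p}$ along a straight segment. First, I would observe that $a$ and $b$ lie in the closed convex cone
\[ S := \{r e^{i\theta} : r \ge 0,\ \theta \in [0, p\pi]\} \subset \overline{\mathbb H}, \]
which, since $p < 1$, is contained in the slit plane $\mathbb{C} \setminus (-\infty, 0]$ where $z \mapsto z^{1/p}$ is holomorphic. This containment, together with the fact that the argument of $x^p$ lies in $[0, p\pi] \subset (-\pi, \pi]$, gives the principal-branch identity $(x^p)^{1/p} = x$ on $\overline{\mathbb H}$ (and likewise for $y$), so the quantity on the left-hand side equals $|(a+b)^{1/p} - a^{1/p} - b^{1/p}|$. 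The segment $\{a + tb : t \in [0,1]\}$ also stays inside $S$ by convexity.

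Second, I would apply the fundamental theorem of calculus to the holomorphic function $z \mapsto z^{1/p}$ along this segment, obtaining
\[ (a+b)^{1/p} - a^{1/p} = \frac{1}{p} \int_0^1 b\,(a + tb)^{1/p-1}\,dt. \]
Combined with $|b^{1/p}| = |b|^{1/p}$ and the trivial bound $|a+tb|^{1/p-1} \le (|a|+|b|)^{1/p-1}$ (valid since $1/p - 1 > 0$), this yields
\[ \bigl|(a+b)^{1/p} - a^{1/p} - b^{1/p}\bigr| \le \frac{|b|}{p}(|a|+|b|)^{1/p-1} + |b|^{1/p}. \]
Third, by the symmetry of both sides of the target inequality in $(x, y)$, I may assume $|y| \le |x|$, i.e.\ $|b| \le |a|$. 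Then $(|a|+|b|)^{1/p-1} \le (2|a|)^{1/p-1}$ and $|b|^{1/p} \le |b|\,|a|^{1/p-1}$. Substituting back $|a|^{1/p-1} = |x|^{1-p}$ and $|b| = |y|^p$ produces the required estimate with $C_p := 2^{1/p-1}/p + 1$.

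The main obstacle, and the only delicate point, is the branch-cut bookkeeping: one has to verify that the principal-branch identities $(x^p)^{1/p} = x$ hold uniformly on $\overline{\mathbb H}$ and that the segment from $a$ to $a + b$ remains inside the domain of holomorphy of $z \mapsto z^{1/p}$. Both are encoded in the containment of all relevant points in the sector $S$, which is a convex cone of arguments lying in $[0, p\pi] \subset [0, \pi)$. The degenerate cases $x = 0$ or $y = 0$ are trivial under the convention $0^p = 0$ from Section~\ref{sec:asym-var}, since both sides of the inequality then vanish.
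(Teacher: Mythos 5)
Your proof is correct, and it takes a genuinely different route from the paper's. The paper normalizes to $x=1$ by rotation and scaling, splits into the regimes $|y|\le 1$ and $|y|\ge 1$, and in each regime obtains a bound by observing that the relevant ratio extends continuously (with limit $1/p$) and applying the maximum principle on the half-disc or its complement; the resulting constant $C_p$ is implicit. You instead pass to $a=x^p,\, b=y^p$, note that both lie in the convex sector $S=\{re^{i\theta}: r\ge 0,\ \theta\in[0,p\pi]\}$ which avoids the branch cut, and estimate $(a+b)^{1/p}-a^{1/p}-b^{1/p}$ by the fundamental theorem of calculus along the segment $[a,a+b]\subset S$, using $|(a+tb)^{1/p-1}|\le (|a|+|b|)^{1/p-1}$ (valid since $1/p-1>0$) and then $|b|\le|a|$. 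This yields the explicit constant $C_p=2^{1/p-1}/p+1$, which is a real gain: the uniform boundedness of $C_p$ on compact subintervals of $(0,1)$, needed in Remark~\ref{rem:unif-bdd}, becomes immediate from the explicit formula rather than requiring a separate compactness/continuity argument. The one point worth stating a bit more carefully, though you do flag it, is that the principal-branch identity $(x^p)^{1/p}=x$ on $\overline{\mathbb H}$ and the validity of the contour integral both hinge on $\arg(x^p)\in[0,p\pi]\subset[0,\pi)$ and on $a+tb\neq 0$ for $t\in[0,1]$ (which, with $a,b\in S\setminus\{0\}$, follows from $p\pi<\pi$); that bookkeeping is sound as you present it.
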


We assume this lemma. 
We see that 
\[ \left| S_{p, m+1}^{\frac{1}{p}} - S_{1, m+1}\right|  \le \left| S_{p, m+1}^{\frac{1}{p}} - \left( S_{p, m}^{\frac{1}{p}} + Y_{m+1} \right) \right|  +  \left| S_{p, m}^{\frac{1}{p}} - S_{1,m} \right|. \]

Then, 
by applying Lemma \ref{lem:positive-fundamental} to the case that $x = S_{p,m}^{\frac{1}{p}}$ and $y = Y_{m+1}$, 
we see that 
\[ E\left[ \left|S_{p, m+1}^{\frac{1}{p}} - \left( S_{p,m}^{\frac{1}{p}} + Y_{m+1} \right) \right| \right] \]
\[\le C_p \left(E\left[\left|S_{p,m}\right|\right] E\left[|Y_{m+1}|^{1-p}\right] + E\left[\left| S_{p,m} \right|^{\frac{1}{p} -1}\right] E\left[|Y_{m+1}|^p\right] \right).\]
By the assumption, 
$E\left[|Y_{m+1}|^{1-p} + |Y_{m+1}|^{p}\right] < +\infty$ and $E\left[\left|S_{p,m}\right|\right] \le m E\left[|Y_{1}|^{p}\right] < +\infty$.
By the H\"older inequality, 
\[ E\left[\left| S_{p,m}  \right|^{\frac{1}{p} -1}\right] \le  m^{\frac{1}{p} - 1} E\left[\left( \frac{1}{m} \sum_{j=1}^{m} |Y_j|  \right)^{1 -p}\right] \le m^{p + \frac{1}{p} - 1} E\left[|Y_1|^{1-p}\right]  < +\infty.\] 
By this and the assumption for induction,  
we see the assertion holds also for $n=m+1$. 
\end{proof}

\begin{proof}[Proof of Lemma  \ref{lem:positive-fundamental}]
If $x=0$ or $y=0$, then, the assertion obviously holds. 
Hereafter we assume that $xy \ne 0$. 
By symmetry and a suitable clockwise rotation of $x$ and $y$, we can assume that $x > 0$ and $y \in \overline{\mathbb H}$. 
By scaling, we can further assume that $x=1$ and $y \in \overline{\mathbb H}$. 
Now it suffices to show that there exists $C_p$ such that for every $y \in \overline{\mathbb H}$, 
\begin{equation}\label{eq:reduction-sufficient}
\left|(1+y^p)^{1/p} - 1 - y \right| \le C_p \max\left\{|y|^p, |y|^{1-p}\right\}
\end{equation}

We show this by considering two cases according to the value of $|y|$. 

We first consider the case that $|y| \le 1$. 
We see that 
\[ \lim_{y \to 0, y \in \overline{\mathbb{H}} \setminus \{0\}} \frac{(1+y^p)^{\frac{1}{p}} - 1 - y}{y^p} =\frac{1}{p}. \]
Furthermore, as a function of $y$, 
$\frac{(1+y^p)^{\frac{1}{p}} - 1 - y}{y^p}$ is holomorphic on $\mathbb D \cap \mathbb H$, where we let $\mathbb D := \left\{z \in \mathbb C : |z| < 1 \right\}$.  
Hence, by the maximal principle, there exists a positive constant $C_{p,1}$ such that 
\[ \left| (1+y^p)^{\frac{1}{p}} - 1 - y \right| \le C_{p,1} |y|^p, \  \ y \in \overline{\mathbb H}, \ |y| \le 1, \] 
and hence \eqref{eq:reduction-sufficient} holds.

We second consider the case that $|y| \ge 1$. 
Let $z := \frac{1}{y^p}$ for $y$ such that $|y| \ge 1$. 
Since $p < 1$, it holds that $z \in (-\mathbb H) \cup (0, \infty)$ and in particular $z \notin (-\infty,0)$.  
We see that 
\[ \frac{(1+y^p)^{\frac{1}{p}} - 1 - y}{y^{1-p}} = z^{\frac{1}{p} - 1} \left(  \left(1+\frac{1}{z} \right)^{\frac{1}{p}} - 1 - \left(\frac{1}{z}\right)^{\frac{1}{p}} \right). \]
Since $\frac{1}{z}, 1+\frac{1}{z} \in \mathbb H \cup (0, \infty)$, 
\[ z^{\frac{1}{p}} \left(1+\frac{1}{z} \right)^{\frac{1}{p}} = (z+1)^{\frac{1}{p}}, \textup{   and,   }  z^{\frac{1}{p}} \left(\frac{1}{z} \right)^{\frac{1}{p}} = 1. \]
Hence, 
\begin{equation}\label{eq:transfer} 
\frac{(1+y^p)^{\frac{1}{p}} - 1 - y}{y^{1-p}} = \frac{(1+z)^{\frac{1}{p}} - 1 - z^{\frac{1}{p}}}{z}.
\end{equation}
Hence, 
\[ \lim_{|y| \to \infty, \ y \in \overline{\mathbb H}} \frac{(1+y^p)^{\frac{1}{p}} - 1 - y}{y^{1-p}} = \lim_{z \to 0} \frac{(1+z)^{\frac{1}{p}} - 1 - z^{\frac{1}{p}}}{z} = \frac{1}{p}. \]
Furthermore, a function of $y$, 
$\frac{(1+y^p)^{\frac{1}{p}} - 1 - y}{y^{1-p}}$ is continuous on $\overline{\mathbb H} \setminus \mathbb D$. 
Hence, by the maximal principle, 
there exists a positive constant $C_{p,2}$ such that 
\[ \left| (1+y^p)^{\frac{1}{p}} - 1 - y \right| \le C_{p,2} |y|^{1-p},  \ \ y \in \overline{\mathbb H}, \ |y| \ge 1.\]  
Hence \eqref{eq:reduction-sufficient} holds if we let $C_p := \max\{C_{p,1}, C_{p,2}\}$. 
\end{proof}

\begin{Rem}\label{rem:unif-bdd}
For $p \in (0,1)$,  
let $\mathbb D_{p, \pm} := \left\{z^{\pm p} :z \in  \overline{\mathbb H \cap \mathbb D} \right\}$. 
We remark that $-1 \notin D_{p, \pm} \subset \overline{\mathbb D}$.   
Let $0 < p_1 \le p_2 < 1$. 
Let $C_p, C_{p,1}, C_{p,2}$ be the constants in the proof of Lemma \ref{lem:positive-fundamental}. 

Then, as a function of $(p,y)$, 
$\frac{(1+y)^{\frac{1}{p}} - 1}{y}$ is continuous on $(p,y) \in [p_1, p_2] \times (\mathbb D_{p, +} \cap \{z : |z| \ge 1/2\})$.
Hence, 
\[ \sup_{p \in [p_1, p_2]} \sup_{y \in \mathbb D_{p, +} \cap \{z : |z| \ge 1/2\}} \left| \frac{(1+y)^{\frac{1}{p}} - 1 - y^{\frac{1}{p}}}{y} \right| < +\infty. \]

Let $\frac{(1+y)^{\frac{1}{p}} - 1}{y} := \frac{1}{p}$ if $y=0$. 
By the Taylor expansion of $(1+y)^{\frac{1}{p}}$ at $y = 0$, 
\[ \sup_{p \in [p_1, p_2]} \sup_{y \in \{z : |z| \le 1/2\}} \left| \frac{(1+y)^{\frac{1}{p}} - 1}{y} - \frac{1}{p} \right| < +\infty. \]
Hence, 
\[ \sup_{p \in [p_1, p_2]} \sup_{y \in \{z : |z| \le 1/2\}} \left| \frac{(1+y)^{\frac{1}{p}} - 1 - y^{\frac{1}{p}}}{y} \right| < +\infty. \]
Hence, 
\[ \sup_{p \in [p_1, p_2]} \sup_{y \in \mathbb D_{p, +}} \left| \frac{(1+y)^{\frac{1}{p}} - 1 - y^{\frac{1}{p}}}{y} \right| < +\infty. \]
Hence $$\sup_{p \in [p_1, p_2]} C_{p,1} < +\infty.$$

In the same manner, we see that 
\[ \sup_{p \in [p_1, p_2]} \sup_{y \in \mathbb D_{p, -}} \left| \frac{(1+y)^{\frac{1}{p}} - 1 - y^{\frac{1}{p}}}{y} \right| < +\infty. \]
By this and \eqref{eq:transfer},
$$\sup_{p \in [p_1, p_2]} C_{p,2} < +\infty.$$

Hence, for $0 < p_1 \le p_2 < 1$, 
we can assume that 
$$\sup_{p \in [p_1, p_2]} C_p < +\infty.$$
\end{Rem}

Now we show Theorem \ref{thm:positive-parameterlimit}. 
\eqref{eq:positive-parameterlimit-gen} is equivalent with 
\begin{equation}\label{eq:positive-parameterlimit}
\lim_{p \to +0} \frac{S_{p,n}^{\frac{1}{p}} - S_{1,n}}{n^{\frac{1}{p}}-n} = \prod_{j=1}^{n} Y_j^{1/n}, \ \ \textup{ a.s. and in $L^1$}.  
\end{equation}

We first show the assertion for the case that $p=1/k$ for some positive integer $k$. 
Throughout the proof, $\ell$ is a positive number satisfying \eqref{ass-fine}. 

\begin{proof}[Proof of Theorem \ref{thm:positive-parameterlimit} for the case that $p=1/k$]
The almost sure convergence is obvious, so the rest of the proof is devoted to the $L^1$ convergence of \eqref{eq:positive-parameterlimit}. 

Let 
\[ A_k := \left\{ (a_1, \cdots, a_n) : a_1+\cdots+a_n = k, a_j \le k-1 \textup{ for every } j \right\}, \]
\[ B_{k,1} := \left\{ (a_1, \cdots, a_n) \in A_k : |a_j - k/n| \ge k/2n  \ \textup{ for some } j \right\}, \]
and $B_{k,2} := A_k \setminus B_{k,1}$.

By the large deviation result for the sum $ \frac{1}{k} \sum_{j=1}^{k} \left(\mathbf{1}_{\{i\}}(Z_j) - \frac{1}{n} \right)$,  
where we let $P(Z_1 = i) = 1/n, 1 \le i \le n,$ 
we see that for some $0 < \beta < 1$, 
\[ \frac{1}{n^k - n} \sum_{(a_1, \cdots, a_n) \in B_{k,1}} \frac{k!}{a_1! \cdots a_n !} \]
\begin{equation}\label{LDP-sum} 
= \frac{1}{n^{k-1} - 1} \sum_{a \le k-1; |a - k/n| \ge k/2n} \binom{k}{a} (n-1)^{k-a} = O(\beta^k). 
\end{equation}

By \eqref{ass-fine}, we see that 
$$E\left[|Y_j|^{a_j/k}\right] = O(k^\ell), \ a_j \le k-1.$$ 

By this and \eqref{LDP-sum}, we see that 
\[ \frac{1}{n^k - n} \sum_{(a_j)_j \in B_{k,1}} \frac{k!}{a_1! \cdots a_n !} E\left[ \prod_{j=1}^{n} |Y_j|^{a_{j}/k} \right] = O\left(k^{\ell n} \beta^k\right). \]

Let $\mathcal{A}$ be an arbitrarily event and consider 
\[ \sum_{(a_1, \cdots, a_n) \in B_{k,2}} \frac{k!}{a_1! \cdots a_n !} E\left[ \prod_{j=1}^{n} |Y_j|^{a_{j}/k}, \ \mathcal{A}\right].  \]

Recall that $n \ge 2$.
If $|a_i - k/n| < k/2n$, then, $a_i/k \le 3/4$.  
Hence, by the H\"older inequality and the inequality that $7a_j /6k \le 7/8$ if $|a_j - k/n| < k/2n$, 
we see that if $(a_1, \cdots, a_n) \in B_{k,2}$, 
\begin{align*} 
E\left[ \prod_{j=1}^{n} \left|Y_j\right|^{a_{j}/k}, \mathcal{A}\right] &\le E\left[ \prod_{j=1}^{n} |Y_j|^{\frac{7a_{j}}{6k}} \right]^{\frac{6}{7}} P(\mathcal{A})^{\frac{1}{7}} = P(\mathcal{A})^{\frac{1}{7}} \prod_{j=1}^n  E\left[|Y_1|^{\frac{7a_j}{6k}}\right]^{\frac{6}{7}} = O(1)  P(\mathcal{A})^{\frac{1}{7}}. 
\end{align*}

Thus we see that 
\[ E\left[\frac{1}{n^k - n} \sum_{(a_j)_j \in B_{k,2}} \frac{k!}{a_1! \cdots a_n !} \prod_{j=1}^{n} |Y_j|^{a_{j}/k},  \ \mathcal{A}\right] = O\left(k^{\ell n} \beta^k\right) + O(1) P(\mathcal{A})^{\frac{1}{7}}. \]

Since $k^{\ell n} \beta^k \to 0, \ k \to \infty$, 
we see that the sequence 
\[ \left(\frac{1}{n^k - n} \sum_{(a_j)_j \in B_{k,2}} \frac{k!}{a_1! \cdots a_n !} \prod_{j=1}^{n} Y_j^{a_{j}/k} \right)_k \]
is uniformly integrable {\it with respect to $k$. }
\end{proof}

Now we proceed to the general case. 

\begin{proof}[Proof of Theorem \ref{thm:positive-parameterlimit} for the case that $p \ne 1/k$]
The almost sure convergence is obvious, so the rest of the proof is devoted to the $L^1$ convergence of \eqref{eq:positive-parameterlimit}. 

Since we deal with the limit $p \to +0$, 
we can assume that $p < 1/100$. 
Let $k = k(p)$ be the unique integer such that $1/(k+3) < p \le 1/(k+2)$. 

We first see that 
\begin{equation}\label{eq:decomposition}
S_{p,n}^{\frac{1}{p}} - S_{pk,n} S_{1-pk,n} = \left( S_{p,n}^{k} - S_{pk,n} \right)  S_{p,n}^{\frac{1}{p}- k} + S_{pk, n}  \left( S_{p,n}^{\frac{1}{p}- k} -  S_{1-pk, n} \right). 
\end{equation}

We consider the first term of \eqref{eq:decomposition}. 
As in the proof of the case that $p=1/k$ above, 
we obtain that 
\[ \left( S_{p,n}^{k} -S_{pk, n} \right)  S_{p,n}^{\frac{1}{p}- k} = \sum_{(a_j)_j \in A_k} \frac{k!}{a_1! \cdots a_n !} \prod_{j=1}^{n} \left(Y_j^p\right)^{a_{j}} S_{p,n}^{\frac{1}{p}- k}.  \]

First we assume that $(a_j)_j \in B_{k,1}$. 
By the H\"older inequality and $2 \le 1/p - k < 3$, 
we see that 
\[ S_{p,n}^{1- kp} \le n^3 \sum_{j=1}^{n} |Y_j|^{1-kp}. \]
Hence, by the independence of $Y_1, \cdots, Y_n$, 
\[ E\left[ \prod_{j=1}^{n} \left(Y_j^p\right)^{a_{j}} S_{p,n}^{\frac{1}{p}- k}\right] \le n^3 \sum_{j=1}^{n} E\left[ |Y_j|^{1-kp + pa_j} \right] \prod_{m \ne j} E\left[ |Y_m|^{pa_{m}}\right]. \]
Since $2 \le 1/p - k < 3$ and $(a_j)_j \in A_k$, 
it holds that 
$1-kp + pa_j \le 1-p \le \frac{k+2}{k+3}$ and $pa_{m} \le \frac{k+2}{k+3}$ for each $m$.  
By this and \eqref{ass-fine}, 
we see that 
\[ E\left[ \prod_{j=1}^{n} \left(Y_j^p\right)^{a_{j}}S_{p,n}^{\frac{1}{p}- k}\right] = O\left( (k+3)^{\ell n} \right).\]
By this, $k < 1/p$ and \eqref{LDP-sum}, we see that 
\[ \frac{1}{n^{\frac{1}{p}} - n} \sum_{(a_j)_j \in B_{k,1}} \frac{k!}{a_1! \cdots a_n !} E\left[ \prod_{j=1}^{n} \left(|Y_j|^p\right)^{a_{j}} \left( \sum_{j=1}^{n} |Y_j|^{p}\right)^{\frac{1}{p}- k} \right] = O\left(k^{\ell n} \beta^k\right). \]

Second we assume that $(a_j)_j \in B_{k,2}$. 
Let $\mathcal{A}$ be an arbitrary event. 
Then, 
by the H\"older inequality and the assumption that $n \ge 2$, 
\[ E\left[ \left(\prod_{j=1}^{n} \left|Y_j\right|^{pa_{j}}\right) \left( \sum_{j=1}^{n} |Y_j|^{p}\right)^{\frac{1}{p}- k}, \ \mathcal{A}\right]  = O(1)  E\left[ \left(\sum_{j=1}^{n} |Y_j|^p   \right)^{7(\frac{1}{p} - k)}, \ \mathcal{A} \right]^{\frac{1}{7}}.\]

Since $p < 1/100$ and $1/p - k \le 2$, 
we see that 
\[ \lim_{P(\mathcal A) \to +0}  E\left[ \left(\sum_{j=1}^{n} |Y_j|^p   \right)^{7(\frac{1}{p} - k)}, \ \mathcal{A} \right] = 0. \]

Hence, 
the sequence $\left\{ \frac{1}{n^{\frac{1}{p}}} \left( S_{p,n}^{k} - S_{pk,n} \right)  S_{p,n}^{\frac{1}{p}- k}  \right\}_p$ 
is uniformly integrable  with respect to $p \in (0, 1/100)$.

We now consider the second term of \eqref{eq:decomposition}. 
By induction in $n$, we will show that 
\begin{equation}\label{eq:induction}
\lim_{p \to +0} \frac{1}{n^{\frac{1}{p}}} S_{pk,n}  \left( S_{p,n}^{\frac{1}{p}- k} -  S_{1-pk,n} \right) = 0,  \ \textup{ a.s. and in $L^1$. }
\end{equation} 

The almost sure convergence holds, because $2 \le 1/p - k < 3$, and, $pk \to 1$ as $p \to +0$.  
Now we show the $L^1$ convergence.   
Let $n=2$. 
Then, $\max\{p, 1-(k+1)p\} = 1-(k+1)p$. 
By this and Lemma \ref{lem:positive-fundamental}, it holds that 
\[ \left| S_{p,n}^{\frac{1}{p}- k} -  S_{1-pk, n} \right| \le C_{\frac{p}{1-kp}} (1+|Y_1|^{1-(k+1)p})(1+|Y_2|^{1-(k+1)p}). \]
Hence, 
\[ \left| S_{pk, n} \right| \left| S_{p,n}^{\frac{1}{p}- k} - S_{1-pk, n} \right| \]
\[ \le C_{\frac{p}{1-kp}} \left( (|Y_1|^{pk}+|Y_1|^{1-p}) (1+|Y_2|^{1-(k+1)p}) + (1+|Y_1|^{1-(k+1)p})(|Y_2|^{pk}+ |Y_2|^{1-p}) \right). \]
Since $1/3 \le \frac{p}{1-kp} \le 1/2$, 
by Remark \ref{rem:unif-bdd}, 
$\sup_{p} C_{\frac{p}{1-kp}} < +\infty$.
By this and \eqref{ass-fine},
we obtain \eqref{eq:induction} for $n=2$. 

Assume that \eqref{eq:induction} holds for $n \le m$. 
We will show \eqref{eq:induction} for $n=m+1$. 
Then, 
\begin{align*} 
\left| S_{p,m+1}^{\frac{1}{p}- k} - S_{1-pk, m+1} \right|  &\le \left| S_{p, m+1}^{\frac{1}{p}- k} -  S_{p, m}^{\frac{1}{p} - k} - Y_{m+1}^{1- kp} \right| +  \left| S_{p,m}^{\frac{1}{p}- k} -  S_{1-pk, m}  \right|. 
\end{align*}

By the inductive assumption, 
it holds that 
\[  \lim_{p \to +0} \frac{1}{(m+1)^{\frac{1}{p}}}  \left|S_{pk, m}\right|  \left| S_{p, m}^{\frac{1}{p}- k} -  S_{1-pk, m} \right|  = 0, \ \textup{ in $L^1$. } \] 

By \eqref{ass-fine} and $2p \le 1-kp \le 3p$, 
we also see that 
\[  \lim_{p \to +0} \frac{1}{(m+1)^{\frac{1}{p}}}  \left| Y_{m+1}^{pk} \right|  \left| S_{p,m}^{\frac{1}{p}- k} - S_{1-pk, m}\right|  = 0, \ \textup{ in $L^1$.} \]  

Hence, it suffices to show that 
\[  \lim_{p \to +0} \frac{1}{(m+1)^{\frac{1}{p}}}  
\left| S_{pk, m+1} \right|  \left| S_{p, m+1}^{\frac{1}{p}- k} - S_{p,m}^{\frac{1}{p} - k} - Y_{m+1}^{1- kp} \right| = 0, \ \textup{  in $L^1$. } \]

It holds that 
\[ \left| S_{p, m+1}^{\frac{1}{p}- k} -  S_{p,m}^{\frac{1}{p} - k} - Y_{m+1}^{1- kp} \right|  \le C_{\frac{p}{1-kp}} \left(1+ \left| S_{p,m} \right|^{1/p-(k+1)}\right) \left(1+|Y_{m+1}|^{1-(k+1)p} \right). \]

Hence, 
\[  \left| S_{pk, m+1} \right| \cdot \left| S_{p, m+1}^{\frac{1}{p}- k} -  S_{p,m}^{\frac{1}{p} - k} - Y_{m+1}^{1- kp} \right|\]
\[ \le C_{\frac{p}{1-kp}} \left(1+ \left|S_{p,m}\right|^{\frac{1}{p}-(k+1)}\right) \left( \left| S_{pk,m} \right| \left(1+|Y_{m+1}|^{1-(k+1)p} \right) + |Y_{m+1}|^{kp} +|Y_{m+1}|^{1-p}  \right). \] 

Since $1 \le 1/p-(k+1) \le 2$, $kp  \le 1-p$ and \eqref{ass-fine}, 
we see that by the H\"older inequality, 
\[ E\left[ \left| S_{pk, m} \right| \left|S_{p,m} \right|^{\frac{1}{p}-(k+1)} \right] \le m^2  E\left[ \left( \sum_{j=1}^{m} |Y_j|^{pk} \right) \left(\sum_{j=1}^{m} |Y_j|^{1- (k+1)p} \right) \right] \]
\[\le m^3 E\left[|Y_1|^{1-p}\right] + m^4 E\left[|Y_1|^{pk} \right]  E\left[|Y_1|^{1 - (k+1)p} \right] = m^4 O(p^{-\eta}), \]
where we should recall that $\eta$ comes from \eqref{ass-fine}. 
Thus we have \eqref{eq:induction} for $n=m+1$.

Finally, by \eqref{ass-fine}, 
we see that 
\[ \lim_{p \to +0} \frac{1}{n^{\frac{1}{p}}} \left( S_{1,n}  - S_{pk,n} S_{1-pk,n} \right) = 0, \  \textup{ a.s. and in $L^1$, } \] 
in the same manner as above.

Thus we also see that 
\[ \lim_{p \to +0} \frac{1}{n^{\frac{1}{p}}}  \left( S_{p,n}^{k} -S_{pk,n}\right)  S_{p,n}^{\frac{1}{p}- k}  = \prod_{j=1}^{n} Y_j^{1/n}, \ \textup{ a.s. } \] 
By recalling the uniform integrability, 
the above convergence holds in $L^1$. 
Thus we have the $L^1$ convergence of \eqref{eq:positive-parameterlimit}. 
\end{proof}

\section{Limit theorems for sums of products of random variables}\label{sec:pdsum}

The truncated power means in Theorem \ref{thm:truncated-integrability} are in $L^1$, but it is easy to see that they are not in $L^2$ for $p=1/k$. 
In this section, we execute further truncations in order to obtain the integrability of order $2$.

\begin{Def}[sums of products]
\[ R^{(\alpha)}_{m,n} :=  \frac{1}{\binom{n}{m}} \sum_{1 \le \ell_1 < \cdots < \ell_m \le n} \left(\prod_{j=1}^{m} (X_{\ell_j} + \alpha)^{\frac{1}{m}} - \alpha\right), \ \ n \ge m.  \] 
We remark that $R^{(\alpha)}_{m, m} = G^{(\alpha)}_m$.  
\end{Def}

We first recall the following: 
\begin{Thm}[Variance asymptotics for geometric mean {\cite[Theorem 2.1]{Akaoka2021-1}}]\label{var-conv}
If $X_1 \in L^{0+}$, then, 
\[ \lim_{n \to \infty}  n \textup{Var}\left( G^{(\alpha)}_n \right) = \exp(2E[\log |X_1 + \alpha|])   \textup{Var}\left(\log (X_1 + \alpha) \right). \]
\end{Thm}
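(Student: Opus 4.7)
The plan is to derive Theorem \ref{var-conv} as a direct corollary of Theorem \ref{thm:gen-var-lim} applied to the generator $f(x) = \log(x+\alpha)$, for which $M^f_n = G^{(\alpha)}_n$. Since $f^{-1}(z) = \exp(z) - \alpha$ is entire and $f'(x) = 1/(x+\alpha)$, one computes
\[ \left| f'\left( f^{-1}(E[f(X_1)]) \right) \right|^{-2} = \left|\exp(E[\log(X_1+\alpha)])\right|^2 = \exp\left( 2\, \textup{Re}\, E[\log(X_1+\alpha)] \right) = \exp(2E[\log|X_1+\alpha|]), \]
while $\textup{Var}(f(X_1)) = \textup{Var}(\log(X_1+\alpha))$. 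Substituting into \eqref{eq:limvar-scaled-gen} yields exactly the claimed limit, so the whole task is to verify that the hypotheses of Theorem \ref{thm:gen-var-lim} are satisfied.

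The condition $E[|f(X_1)|^2] < +\infty$ follows from the two-sided bound $|\log(x+\alpha)| \le |\log|x+\alpha|| + \pi$ together with $X_1 \in L^{0+}$: at infinity $(\log|X_1|)^2 = o(|X_1|^{\delta})$ for any $\delta > 0$, while near $x = -\alpha$ the continuity of the distribution of $X_1$ makes the logarithmic singularity harmless. The nondegeneracy $E[f(X_1)] \ne 0$ is not actually essential in this application because $f^{-1} = \exp(\cdot) - \alpha$ has no finite singularity; the Taylor-expansion argument in the proof of Theorem \ref{thm:gen-var-lim} requires only that $E[f(X_1)] \in f(U)$, which is automatic here, and one can read the proof with this mild relaxation.

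The main obstacle, and the only substantive step, is the uniform moment bound \eqref{eq:ass-integrable}. Here I would exploit the explicit product structure of the geometric mean: since
\[ \left| G^{(\alpha)}_n + \alpha \right|^{2+\varepsilon_0} = \prod_{j=1}^{n} |X_j+\alpha|^{(2+\varepsilon_0)/n}, \]
independence gives
\[ E\left[ \left| G^{(\alpha)}_n + \alpha \right|^{2+\varepsilon_0} \right] = E\left[ |X_1+\alpha|^{(2+\varepsilon_0)/n} \right]^n. \]
Invoking the l'H\^opital-type identity \eqref{eq:LHospital}, valid whenever $X_1 \in L^{0+}$, the right-hand side converges to $\exp\left((2+\varepsilon_0) E[\log|X_1+\alpha|]\right) < +\infty$ as $n \to \infty$, and is therefore bounded in $n$. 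Combined with the trivial estimate $|G^{(\alpha)}_n| \le |\alpha| + |G^{(\alpha)}_n + \alpha|$, this delivers $\limsup_n E[|G^{(\alpha)}_n|^{2+\varepsilon_0}] < +\infty$, which is precisely \eqref{eq:ass-integrable}, and the proof is complete.
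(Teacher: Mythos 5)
Your proposal is correct and follows exactly the route the paper indicates: derive Theorem \ref{var-conv} as a corollary of Theorem \ref{thm:gen-var-lim} with $f(x) = \log(x+\alpha)$. The key step---verifying the uniform moment bound \eqref{eq:ass-integrable} via the product identity $G_n^{(\alpha)} + \alpha = \prod_j (X_j+\alpha)^{1/n}$, independence, and the l'H\^opital estimate \eqref{eq:LHospital}---is precisely the mechanism the paper itself uses in the analogous verification inside the proof of Theorem \ref{thm:neg-var-lim}, and your remark that the hypothesis $E[f(X_1)] \neq 0$ is dispensable here (because $f^{-1}$ is entire, so the Taylor radius $\delta$ need not be bounded by $|E[f(X_1)]|$) is a sound reading of that proof.
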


We can show the above by using Theorem \ref{thm:gen-var-lim} in Section \ref{sec:asym-var}.

Now we consider  asymptotic behaviors for sums of products $\left(R^{(\alpha)}_{m, n}\right)_{n \ge m}$. 
As the following shows, their behaviors are similar to $G_m^{(\alpha)}$. 

\begin{Thm}\label{PRS}
Let $m \ge 2$ and $X_1 \in L^{\frac{2m}{2m+1}}$. Then, \\
(i) The following convergence holds almost surely and  in $L^{m-1}$:
\[  \lim_{n \to \infty} R^{(\alpha)}_{m, n} = E\left[ G_{m}^{(\alpha)}\right].\]
(ii) For $n \ge m$, 
\[ m \textup{Var} \left(G^{(\alpha)}_m \right) \frac{ \left| E\left[ (X_1 + \alpha)^{\frac{1}{m}} \right] \right|^{2m}}{E\left[|X_{1} + \alpha|^{\frac{2}{m}} \right]^m}  \le n \textup{Var}\left( R^{(\alpha)}_{m, n}  \right)  
\le m \textup{Var} \left(G^{(\alpha)}_m\right). \]
\end{Thm}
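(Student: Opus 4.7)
The plan has two parts corresponding to the two assertions.

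For (i), I recognize $R_{m,n}^{(\alpha)} + \alpha$ as a U-statistic of order $m$ based on i.i.d.\ sample $X_1,\dots,X_n$ with symmetric kernel $h(x_1,\dots,x_m) := \prod_{j=1}^m (x_j+\alpha)^{1/m}$. Its mean is $\mu^m$, where $\mu := E[(X_1+\alpha)^{1/m}]$, so $E[R_{m,n}^{(\alpha)}] = \mu^m-\alpha = E[G_m^{(\alpha)}]$. A direct check shows $X_1 \in L^{2m/(2m+1)}$ implies $X_1 \in L^{(m-1)/m}$, and by independence of the factors $E[|h|^{m-1}] \le E[|X_1+\alpha|^{(m-1)/m}]^m < \infty$. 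Any U-statistic is a reverse martingale for the decreasing family of exchangeable $\sigma$-fields, whose tail $\sigma$-field is trivial by Hewitt--Savage; the reverse martingale convergence theorem then delivers both the almost-sure and the $L^{m-1}$ convergence to $E[G_m^{(\alpha)}]$.

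For (ii) the key is to reduce both inequalities to a single one-variable inequality about a hypergeometric expectation. Computing the conditional kernel $h_k(x_1,\dots,x_k) = \mu^{m-k}\prod_{j\le k}(x_j+\alpha)^{1/m}$ gives
\[ \zeta_k := \textup{Var}(h_k(X_1,\dots,X_k)) = |\mu|^{2(m-k)}(\nu^k - |\mu|^{2k}), \quad \nu := E[|X_1+\alpha|^{2/m}]. \]
Plugging into Hoeffding's variance formula $\textup{Var}(R_{m,n}^{(\alpha)}) = \binom{n}{m}^{-1}\sum_{k=1}^m \binom{m}{k}\binom{n-m}{m-k}\zeta_k$ and applying the Vandermonde convolution, this collapses to
\[ \textup{Var}(R_{m,n}^{(\alpha)}) = b^m\bigl(E[x^Z]-1\bigr), \quad \textup{Var}(G_m^{(\alpha)}) = b^m(x^m-1), \]
where $b := |\mu|^2$, $a := \nu$, $x := a/b \ge 1$ by Cauchy--Schwarz, and $Z \sim \textup{Hypergeometric}(n,m,m)$ with $E[Z] = m^2/n$. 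The claim then becomes
\[ \frac{m}{n\,x^m} \;\le\; \frac{E[x^Z]-1}{x^m-1} \;\le\; \frac{m}{n} \qquad (x \ge 1). \]

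The upper bound follows from the Abel-summation identity $\dfrac{E[x^Z]-1}{x^m-1} = \dfrac{\sum_{j=0}^{m-1}x^j P(Z>j)}{\sum_{j=0}^{m-1}x^j}$, which displays the ratio as an $x^j$-weighted average of the decreasing sequence $\{P(Z>j)\}_{j=0}^{m-1}$; since the weights $x^j$ are non-decreasing, Chebyshev's sum inequality bounds this by the arithmetic mean $(1/m)\sum_{j=0}^{m-1}P(Z>j)=E[Z]/m=m/n$.

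For the lower bound, Jensen's inequality (using convexity of $t\mapsto x^t$ for $x\ge 1$) yields $E[x^Z]\ge x^{E[Z]}=x^{m^2/n}$, so it suffices to verify the elementary inequality $nx^m(x^{m^2/n}-1)\ge m(x^m-1)$ on $[1,\infty)$. Setting $\phi(x) := nx^{m+m^2/n}-(n+m)x^m+m$, one computes $\phi(1)=0$ and $\phi'(x) = m(n+m)x^{m-1}(x^{m^2/n}-1) \ge 0$ on $[1,\infty)$, giving the conclusion. The main subtle point throughout is this lower bound: Jensen has to be paired precisely with the factor $1/x^m$ on the right, and the monotonicity of $\phi$ must be checked by explicit differentiation despite the competing exponents $m+m^2/n$ and $m$.
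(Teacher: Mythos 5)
Your proof is correct, and it follows a genuinely different route from the paper for both parts. For part (i), where the paper invokes an external a.s.\ convergence result for $M_n^f$ with $f(x)=(x+\alpha)^{1/m}$ and then separately controls the non-distinctive $m$-tuples plus checks a uniform $L^{m-1/2}$ bound, you instead observe that $R^{(\alpha)}_{m,n}+\alpha$ is a U-statistic of order $m$ and apply the reverse-martingale convergence theorem with the exchangeable $\sigma$-field and Hewitt--Savage; conditional Jensen together with $E\bigl[|h|^{m-1}\bigr]=E\bigl[|X_1+\alpha|^{(m-1)/m}\bigr]^m<\infty$ (and $(m-1)/m<2m/(2m+1)$) then gives the $L^{m-1}$ convergence in one stroke. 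This is more conceptual and in fact uses a slightly weaker integrability for the $L^{m-1}$ step. For part (ii), you and the paper arrive at the same identity $\operatorname{Var}\bigl(R^{(\alpha)}_{m,n}\bigr)=b^m\bigl(E[x^Z]-1\bigr)$ with $Z$ hypergeometric, but by different means (Hoeffding's projection decomposition vs.\ the paper's overlap count plus Chu--Vandermonde), and the crucial step differs sharply: the paper applies the mean value theorem to $a_m^{i/m}-b_m^{i/m}$ and resums, whereas you convert the problem into the clean scalar inequality $\frac{m}{nx^m}\le\frac{E[x^Z]-1}{x^m-1}\le\frac{m}{n}$, proving the upper bound via Abel summation and Chebyshev's sum inequality and the lower bound via Jensen's inequality for $t\mapsto x^t$ followed by an explicit monotonicity check of $\phi(x)=nx^{m+m^2/n}-(n+m)x^m+m$. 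Your approach is more systematic and makes the probabilistic structure (hypergeometric overlap, PGF) explicit; the paper's is more self-contained and elementary. Both establish the same bounds.
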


\begin{proof}
(i) The a.s. convergence follows from an application of \cite[Proposition 4.1 (i)]{Akaoka2021-1} to the case that $f(x) = (x+\alpha)^{\frac{1}{m}}$. 
Let 
\[ \mathcal{D} = \mathcal{D}_{n,m} := \{(\ell_1, \cdots, \ell_m) \in \{1, \dots, n\}^m : \textup{distinctive}\}\]
and 
\[ \mathcal{N} = \mathcal{N}_{n,m} := \{(\ell_1, \cdots, \ell_m) \in \{1, \dots, n\}^m : \textup{non-distinctive}\}.\]

We also see  that 
\[  \frac{1}{\binom{n}{m}} \sum_{1 \le \ell_1 < \cdots < \ell_m \le n} \prod_{j=1}^{m} Y_{\ell_j}^{\frac{1}{m}}  =  \frac{1}{m!\binom{n}{m}} \sum_{(\ell_1, \cdots, \ell_m) \in \mathcal{D}} \prod_{j=1}^{m} Y_{\ell_j}^{\frac{1}{m}} .  \]
We see that 
\[ \left(\sum_{i=1}^{n} Y_i^{\frac{1}{m}}\right)^m = \sum_{(\ell_1, \cdots, \ell_m) \in \mathcal{N}}  \prod_{j=1}^{m} Y_{\ell_j}^{\frac{1}{m}}  + \sum_{(\ell_1, \cdots, \ell_m) \in \mathcal{D}} \prod_{j=1}^{m} Y_{\ell_j}^{\frac{1}{m}} .  \]

It is easy to see that 
\[ |\mathcal{D}| = n(n-1) \cdots (n -(m-1)) = m! \binom{n}{m}. \]

We see that for every $a > 1$, 
\[ \lim_{n \to \infty} \frac{1}{n^{a}} \sum_{j=1}^{n} Y_j = 0, \  \textup{ a.s.} \]

Hence, 
\[ \frac{1}{n^m} \left|\sum_{(\ell_1, \cdots, \ell_m) \in \mathcal{N}}  \prod_{j=1}^{m} Y_{\ell_j}^{\frac{1}{m}} \right| \le \left( \frac{1}{n^{\frac{2m^2}{2m+1}}} \sum_{(\ell_1, \cdots, \ell_m) \in \mathcal{N}}  \prod_{j=1}^{m} |Y_{\ell_j}|^{\frac{2}{2m+1}} \right)^{1+\frac{1}{2m}}.  \]

By the inequality of the arithmetic mean and the geometric mean, 
\[ \frac{1}{n^{\frac{2m^2}{2m+1}}} \sum_{(\ell_1, \cdots, \ell_m) \in \mathcal{N}}  \prod_{j=1}^{m} |Y_{\ell_j}|^{\frac{2}{2m+1}} \le \frac{1}{ m n^{\frac{2m^2}{2m+1}}} \sum_{(\ell_1, \cdots, \ell_m) \in \mathcal{N}}  \sum_{j=1}^{m} |Y_{\ell_j}|^{\frac{2m}{2m+1}}.  \]
Since $|\mathcal{N}| = n^m - \binom{n}{m},$ 
the total number of each $i \in \{1, \cdots, n\}$ appearing in the set $\mathcal{N}$ is $(n^m - \binom{n}{m})/n$, and hence 
we see that 
\[ \sum_{(\ell_1, \cdots, \ell_m) \in \mathcal{N}} \sum_{j=1}^{m} |Y_{\ell_j}|^{\frac{2m}{2m+1}}  = \frac{n^m - \binom{n}{m}}{n} \sum_{j=1}^{m} |Y_{\ell_j}|^{\frac{2m}{2m+1}}. \]

Since $|\mathcal{N}| = O(n^{m-1})$,
we see that by the strong law of large numbers, 
\[ \frac{1}{ m n^{2m^2 / (2m+1)}} \frac{n^m - \binom{n}{m}}{n} \sum_{j=1}^{m} |Y_{\ell_j}|^{\frac{2m}{2m+1}} = O(m^{-1}) \frac{1}{n^{3/2}} \sum_{j=1}^{n} |Y_{\ell_j}|^{\frac{2m}{2m+1}} \to 0, \ \ n  \to \infty, \textup{ a.s.} \]
Thus we see that 
\[ \lim_{n \to \infty} \frac{1}{n^m} \sum_{(\ell_1, \cdots, \ell_m) \in \mathcal{N}}  \prod_{j=1}^{m} Y_{\ell_j}^{\frac{1}{m}}  = 0,  \textup{ a.s.} \]

The $L^{m-1}$-convergence follows from the above a.s. convergence and the following uniform integrability. 
By Jensen's inequality, we see that for every $n$, 
\begin{align*} 
E\left[ \left|   \frac{1}{\binom{k}{m}} \sum_{1 \le \ell_1 < \cdots < \ell_m \le n} \prod_{j=1}^{m} Y_{\ell_j}^{\frac{1}{m}}  \right|^{m-1/2} \right] 
&\le  E\left[  \frac{1}{\binom{k}{m}} \sum_{1 \le \ell_1 < \cdots < \ell_m \le n} \prod_{j=1}^{m} |Y_{\ell_j}|^{\frac{2m-1}{2m}} \right] \\
&= E\left[|Y_{1}|^{\frac{2m-1}{2m}} \right]^m.  
\end{align*}

(ii) The value of $E\left[\prod_{j=1}^{m} Y_{\ell_j}^{\frac{1}{m}} \overline{\prod_{j=1}^{m} Y_{\ell_j^{\prime}}^{\frac{1}{m}} }\right]$ depends on 
the overlap between $\ell_1 < \cdots < \ell_m$ and $\ell_1^{\prime} < \cdots < \ell_m^{\prime}$. 
Let the overlap number be the number of $\ell_i$ such that there exists $\ell_j^{\prime}$ such that $\ell_i = \ell_j^{\prime}$. 
We see that if the overlap number is $k$, then, 
\[ E\left[\prod_{j=1}^{m} Y_{\ell_j}^{\frac{1}{m}} \overline{\prod_{j=1}^{m} Y_{\ell_j^{\prime}}^{\frac{1}{m}} }\right] =  E\left[|Y_{1}|^{\frac{2}{m}}\right]^k  \left|E\left[Y_1^{\frac{1}{m}}\right]\right|^{2(m-k)}. \]

The overlap number is more likely to be small as $n \to \infty$.
By a combinatorial argument, we see that 
\[ \frac{1}{\binom{n}{m}^2} \sum_{1 \le \ell_1 < \cdots < \ell_m \le n; 1 \le \ell_1^{\prime} < \cdots < \ell_m^{\prime} \le n} E\left[\prod_{j=1}^{m} Y_{\ell_j}^{\frac{1}{m}} \overline{\prod_{j=1}^{m} Y_{\ell_j^{\prime}}^{\frac{1}{m}} }\right]\] 
\[ = \frac{1}{\binom{n}{m}} \sum_{i=0}^{m} \binom{m}{i} \binom{n-m}{m-i}  E\left[|Y_{1}|^{\frac{2}{m}} \right]^i  \left|E\left[ Y_1^{\frac{1}{m}}\right]\right|^{2(m-i)}. \]

By this and the Chu-Vandermonde identity \cite{Askey1975}, 
we see that 
\[ \textup{Var}\left( \frac{1}{\binom{n}{m}} \sum_{1 \le \ell_1 < \cdots < \ell_m \le n} \prod_{j=1}^{m} Y_{\ell_j}^{\frac{1}{m}}   \right) \]
\[= \frac{1}{\binom{n}{m}^2} \sum_{1 \le \ell_1 < \cdots < \ell_m \le n; 1 \le \ell_1^{\prime} < \cdots < \ell_m^{\prime} \le n} E\left[\prod_{j=1}^{m} Y_{\ell_j}^{\frac{1}{m}} \overline{\prod_{j=1}^{m} Y_{\ell_j^{\prime}}^{\frac{1}{m}} }\right] -  \left|E\left[ Y_1^{\frac{1}{m}}\right]\right|^{2m}. \]

Now we give an estimation for the right hand side of the above display.   
Let $a_m := E\left[ \left| Y_{1} \right|^{\frac{2}{m}} \right]^m$ and $b_m := \left|E\left[ Y_1^{\frac{1}{m}} \right]\right|^{2m}.$ 
Then,  $a_m \ge  b_m$ for each $m \ge 2$, $(a_m)_m$ is decreasing, and 
\[ a_m - b_m = \textup{Var}(G_m^{(\alpha)}).  \]

Therefore, by the mean-value theorem, we see that 
\[ \frac{i}{m} (a_m)^{\frac{i}{m} - 1} (a_m - b_m)  \le (a_m)^{\frac{i}{m}} - (b_m)^{\frac{i}{m}} \le \frac{i}{m} (b_m)^{\frac{i}{m} - 1}  (a_m - b_m), \ 0 \le i \le m.  \]

By this and the Chu-Vandermonde identity again,  
we see that 
\[ \left| \frac{1}{\binom{n}{m}} \sum_{i=0}^{m} \binom{m}{i} \binom{k-m}{m-i}  \left(E\left[|Y_{1}|^{\frac{2}{m}} \right]^i - \left|E\left[ Y_1^{\frac{1}{m}} \right]\right|^{2i}\right) \left|E\left[ Y_1^{\frac{1}{m}} \right]\right|^{2(m-i)} \right| \]
\[ \le \frac{a_m - b_m}{\binom{n}{m}} \sum_{i=1}^{m} \binom{m-1}{i-1} \binom{k-m}{m-i}  = \frac{m(a_m - b_m)}{n}. \]

On the other hand, 
\[ \frac{1}{\binom{n}{m}} \sum_{i=0}^{m} \binom{m}{i} \binom{n-m}{m-i}  \left(E\left[| Y_{1}|^{\frac{2}{m}} \right]^i -  \left|E\left[ Y_1^{\frac{2}{m}}\right]\right|^{i}\right)  \left|E\left[ Y_1^{\frac{2}{m}}\right]\right|^{m-i} \ge \frac{m b_m (a_m - b_m)}{k a_m}. \]

Now we have (ii). 
\end{proof}

\section{Cauchy distribution}\label{sec:Cauchy}

Now we restrict our interest to the random variables following the Cauchy distribution. 
The results in the above sections are stated under weak integrability conditions for $X_1$ and are applicable to the Cauchy distribution. 
Before giving the results and their proofs, we state some background, in particular, statistical properties of a quasi-arithmetic mean $M^f_n$ of i.i.d.\ Cauchy random variables as an estimator of the joint of the location and scale parameters of the Cauchy distribution.  
We assume that the generator $f$ is the power mean with parameter $p \in [-1,1)$. 
Recall \eqref{eq:def-generator}. 

\subsection{Background}

Many estimation methods have been considered, including order statistics and maximum likelihood estimation. 
These results try to balance the computational complexities with the consistency, efficiency, and robustness of estimators. 
For reviews, we refer the reader to \cite[Section 4]{Akaoka2021-1} and \cite[Chapter 16]{Johnson1994} for the results obtained before 1994. 

We now give statistical properties of a quasi-arithmetic mean $M^f_n$ as an estimator of the location-scale Cauchy family.    
McCullagh's parametrization for the Cauchy distribution (\cite{McCullagh1996}) is crucial. 
Specifically, we regard the joint of the location parameter $\mu$ and the scale parameter $\sigma$  as {\it a single complex number} $\gamma = \mu + i\sigma$. 
This idea also appeared in Letac \cite{Letac1978}. 
The estimators $\left(M^f_n\right)_n$  work well for estimations of the {\it joint} of the location and scale parameters of the Cauchy distribution, and not for estimations of location or scale separately. 

$M$-estimators are a general framework containing all of the maximum likelihood estimator(MLE), the mean, and the median as special cases. 
In general, several methods for {\it simultaneous} estimation of location and scale have been investigated, however, they are much more complicated than estimating either location or scale separately. 
See \cite{Huber2009, Maronna2019} for robust statistics. 
The random variable $M_n^f$ can be regarded as  an $M$-estimator of $\gamma$ defined as a solution of 
$\sum_{i=1}^{n} \Psi(X_i, \gamma) = 0$,
where the score function $\Psi$ is defined by $\Psi(x, \gamma) := f(x) - f(\gamma)$. 
We see that $ \sum_{i=1}^{n} \Psi(X_i, M^f_n) = 0$. 
It is Fisher-consistent, that is, $E_{\gamma} [\Psi(X_1, \gamma)] = 0$, 
and the influence function associated with the $M^f_n$ is bounded. 
Thus, $M^f_n$ is a robust estimator of the joint of the location and scale parameters. 
For some one-dimensional distributions {\it other than} the Cauchy distribution,  
$M^f_n$ is robust for $p \ne 0$ (\cite{Mukhopadhyay2021}) and for $p=0$ (\cite{Akaoka2021-3}).  
On the other hand, it should be noted that the Cauchy distribution is an outlier model, so samples with large values are {\it not inconsistent}. 

As mentioned in \cite[Section 6.4]{Huber2009} and \cite[Section 2.7]{Maronna2019}, 
one way to make the simultaneous estimation of location and scale is to use the MLE. 
The MLE of the joint of the location and scale parameters is also robust, however, as mentioned in \cite[Theorem 17]{Okamura2021}, 
we cannot expect explicit algebraic formulas for the sample size $n \ge 5$ and numerical computations such as the Newton-Raphson method may not work well. 
However, on the other hand, the MLE has a strong connection with the power mean of $p=-1$. 
For $\alpha \in \mathbb{H}$, let $\phi_{\alpha}(x) := (x + \alpha)^{-1}$. 
Fix the sample size $n$ and let $Q(\theta) = Q_n (\theta) := M^{\phi_{-\overline{\theta}}}_n$. 
Then, by \cite[Theorem 15]{Okamura2021}, 
the MLE is characterized by a unique fixed point of $Q$.  
Furthermore, if we let $Y_m := Q(Y_{m-1})$, $m \ge 1$, $Y_0 = \beta \in \mathbb H$,  then, 
$\displaystyle\lim_{m \to \infty} Y_m$ is equal to the MLE for every starting point $\beta$ almost surely.  
$Q$ can be seen as a random holomorphic function, and in this sense, we can regard $(\mathbb{H}, Q)$ as a random dynamical system.   
Furthermore, if $\alpha$ is close to $-\overline{\gamma}$, then, the asymptotic behavior of the variances of $\left(M^{\phi_{\alpha}}_n\right)_n$ can be {\it arbitrarily close to} the Cramer-Rao lower bound. 
See \cite[Remark 4.5]{Akaoka2021-1} for details.   

By  \eqref{eq:scale-equivariant} above,  for $\alpha = 0$,  $(M^f_n)_n$ are scale equivariant estimators. 
It is easy to see that $(M^f_n)_n$ is a $\sqrt{n}$-consistent, unbiased, strongly consistent estimator of the Cauchy distribution for $-1 \le p < 0$. 
We obtain the asymptotic behavior for the variances of $(M^f_n)_n$.  
The integrability results for $(M^f_n)_n$ are applicable to the Cauchy distribution. 
In particular, $M^f_n$ has a finite variance for $n \ge 3$. 
We remark that the variance of the MLE diverges for $n=3$ (\cite{McCullagh1996}). 
However, for $0 < p < 1$, we need to truncate some terms from $M^f_n$ in order to assure integrabilities. 
We can also construct confidence discs of $\gamma$ with explicit formulas for the center and the radius of the disc, and, explicitly compute the inaccuracy rate of the large and moderate deviations. 

Our approach leads to the geometric understanding of the estimator. 
$(M^f_n)_n$ is a quasi-likelihood estimator and $\Psi(x, \gamma) = f(x) - f(\gamma)$ is a non-linear differentiable unbiased estimating function. 
We can consider the information geometry based on the estimating function as in \cite{Henmi2011}. 
More specifically, a standardized estimation function defines a Riemannian metric on the space of parameters called the Godambe information matrix. 
It plays the same role as the Fisher information matrix, so that the structure of a statistical manifold is given in the space of parameters. 
Information-geometric properties of the statistical manifold with the Fisher information matrix of the location-scale Cauchy family have been recently investigated by \cite{Nielsen2020}. 
\cite{Itoh2015} introduced a barycenter associated with the Busemann function on Hadamard manifolds. 
The Busemann function is defined on the statistical manifold of the location-scale Cauchy family in a closed form. 
By \cite[Example 2]{Itoh2015}, the MLE coincides with the barycenter of the empirical probability measure of a sample. 
The considerations of statistical (complex) manifolds associated with $f$ and geometric characterizations of $M^f_n$ are interesting open problems.

\subsection{Results}

Let $p_{\gamma}(x)$ be the density function of $X_1$, specifically, 
\[ p_{\gamma}(x) = \frac{\sigma}{\pi} \frac{1}{(x-\mu)^2 + \sigma^2} = \frac{1}{2\pi i}\left( \frac{1}{x-\gamma} - \frac{1}{x-\overline{\gamma}}\right), \ \ x \in \mathbb{R},  \]
where $\mu$ and $\sigma$ are the real and imaginary parts of $\gamma$ respectively. 
Throughout this section, we let $X_1, \cdots, X_n, \dots, $ be i.i.d.\ Cauchy random variables with a complex parameter $\gamma \in \mathbb H$. 

For every function $H$ in a class of holomorphic functions on the upper-half plane and for every $\gamma \in \mathbb H$, 
it holds that 
\begin{equation}\label{eq:Cauchy-unbiased} 
E[H(X_1)] = H(\gamma) 
\end{equation} 
if $X_1$ follows the Cauchy distribution with parameter $\gamma$, see \cite{McCullagh1996} and \cite[Section 4]{Akaoka2021-1}. 

Assume that either (i) $-1 < p < 0$ and $\alpha \in \mathbb H$
or (ii) $-1/2 < p < 0$ and $\alpha \in \mathbb R$  
holds. 
We recall that $f(x) = (x+\alpha)^p$. 
Then, by the residue theorem, 
we see that \eqref{eq:Cauchy-unbiased} holds for $H = f$ and $H = f^2$. 
See \cite[Proposition 4.1 (i)]{Akaoka2021-1} for more details. 
Hence, $f(X_1) - f(\gamma)$ is {\it proper} as a complex-valued random variable, specifically, $f(X_1) \in L^2$, and $E[f(X_1) - f(\gamma)] = E\left[ (f(X_1) - f(\gamma))^2 \right] = 0$.   
We also see that 
\begin{equation}\label{eq:cov-nonsingular}
\textup{Cov}\left(f(X_1)\right) = \frac{\textup{Var}(f(X_1))}{2} I_2, 
\end{equation}
and 
\begin{equation}\label{eq:cov-easy} 
J(f^{-1}) \textup{Cov}(f(X_1)) J(f^{-1})^{\prime} = \frac{\textup{Var}(f(X_1))}{2|f^{\prime}(\gamma)|^2} I_2, 
\end{equation}
where $I_2$ is the unit matrix of degree $2$.

Let $\mathcal{P}(\mathbb R)$ be the set of Borel probability measures on $\mathbb{R}$.  
For a map $T : \mathcal{P}(\mathbb R) \to \mathbb H$ and $P \in \mathcal{P}(\mathbb{R})$, 
we define the influence function $\textup{IF}(x;T,P)$ by
\[ \textup{IF}(x;T,P) := \left.\frac{d}{d\varepsilon} \right|_{\varepsilon = 0} T\left((1-\varepsilon) P + \varepsilon \delta_x\right), \ x \in \mathbb{R}. \]

We now let $\displaystyle T(P) := f^{-1}\left(\int_{\mathbb R} f(x) P(dx)\right)$. 
Let $f(x) = (x+\alpha)^p, -1 \le p < 0, \alpha \in \mathbb H$. 
Then, 
\[ \textup{IF}(x;T,P_{\gamma}) = \frac{f(x) - f(\gamma)}{f^{\prime}(\gamma)} = \frac{(x+\alpha)^p - (\gamma + \alpha)^p}{p(\gamma + \alpha)^{p-1}}, \]
where $P_{\gamma}(dx) := p_{\gamma}(x) dx$. 
Since $(x+\alpha)^p \to 0, \ x \to \pm\infty$, 
$\textup{IF}(x;T,P_{\gamma})$ is bounded for $x$. 
Thus our estimators are robust for the joint of the location and scale parameters. 

Now we state our results. 

\begin{Thm}[$L^1$ integrability and unbiasedness for negative parameter case]\label{thm:negative-unbiased}
Let $n \ge 2$. 
Then the following assertions hold:\\
(i) Let $p \in (-1,0)$ and $\alpha \in \overline{\mathbb H}$. 
Then, $M^f_n \in L^1$ and $E\left[M^f_n\right] = \gamma$. \\ 
(ii) Let $p=-1$ and $\alpha \in \mathbb R$. 
Then, $M^f_n \notin L^1$.\\
(iii) Let $p=-1$ and $\alpha \in \mathbb H$. Then, 
$M^f_n \in L^1$ and $E\left[M^f_n\right] = \gamma$. 
\end{Thm}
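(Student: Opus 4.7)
The plan is to prove the three parts separately, with a common mechanism for the unbiasedness claim $E[M_n^f] = \gamma$. I will write $M_n^f = \Phi(X_1,\ldots,X_n)$ with $\Phi(z_1,\ldots,z_n) := f^{-1}\bigl(\frac{1}{n}\sum_j f(z_j)\bigr)$, which is holomorphic on $\overline{\mathbb{H}}^n \setminus \{(-\alpha,\ldots,-\alpha)\}$ by Proposition \ref{prop:ass-neg} (or the $p = -1$, $\alpha \in \mathbb{H}$ case treated in \cite[Example 1.2 (ii)]{Akaoka2021-1}) and satisfies $\Phi(\gamma,\ldots,\gamma) = f^{-1}(f(\gamma)) = \gamma$. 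For $p < 0$ and any fixed values of the other coordinates, the slice $z_j \mapsto \Phi$ extends continuously to $\infty$ in $\overline{\mathbb{H}}$, because $f(z_j) = (z_j+\alpha)^p \to 0$ as $z_j \to \infty$, so the partial average $\frac{1}{n}\sum_i f(z_i)$ stays in a compact subset of $f(\overline{\mathbb{H}}\setminus\{-\alpha\})$ on which $f^{-1}$ is continuous. Hence each slice is bounded and holomorphic on $\overline{\mathbb{H}}$, and once $M_n^f \in L^1$ is known, iterating \eqref{eq:Cauchy-unbiased} one coordinate at a time (Fubini justified by the $L^1$ bound) yields $E[M_n^f] = \Phi(\gamma,\ldots,\gamma) = \gamma$.

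For part (i) the integrability is immediate from Theorem \ref{thm:negative-var}(i) with $r = 1$: one needs $X_1 \in L^{1/n}$, which the Cauchy distribution satisfies whenever $n \ge 2$ because $1/n < 1$. For part (ii) I will show that $M_n^f \sim \mathrm{Cauchy}(\gamma)$ itself, hence not in $L^1$. Using the equivariance of the Cauchy family under real M\"obius transformations in the complex parameterization of \cite{Letac1978,McCullagh1996}: the shift $X_j \mapsto X_j+\alpha$ yields $\mathrm{Cauchy}(\gamma+\alpha)$; a direct density computation shows $(X_j+\alpha)^{-1} \sim \mathrm{Cauchy}(1/(\overline{\gamma}+\alpha))$ with parameter in $\mathbb{H}$; stability of Cauchy under i.i.d.\ sums gives $\sum_j (X_j+\alpha)^{-1} \sim \mathrm{Cauchy}(n/(\overline{\gamma}+\alpha))$; inverting and multiplying by $n$ yields $\mathrm{Cauchy}(\gamma+\alpha)$; and the final shift by $-\alpha$ produces $\mathrm{Cauchy}(\gamma)$.

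For part (iii) with $n \ge 3$, Theorem \ref{thm:negative-var}(ii) with $r = 1$ suffices since Cauchy gives $X_1 \in L^{2/n}$ whenever $2/n < 1$. The principal obstacle is $n = 2$, where that bound degenerates to $E[|X_1+\alpha|] = \infty$. After the real translation absorbing $\mathrm{Re}(\alpha)$, I take $\alpha = ai$ with $a > 0$; a short manipulation gives
\[
M_2^f - ai \;=\; \frac{2X_1 X_2 + 2a^2}{X_1+X_2+2ai},
\]
whose $2a^2/(\,\cdot\,)$ summand is bounded in modulus by $a$. Integrability thus reduces to showing $E\bigl[|X_1 X_2|/\sqrt{(X_1+X_2)^2+4a^2}\,\bigr] < \infty$, which I plan to establish by a case split on $\mathrm{sgn}(X_1 X_2)$: on the same-sign region, $\sqrt{(X_1+X_2)^2+4a^2} \ge |X_1|+|X_2|$ combines with $|X_1 X_2|/(|X_1|+|X_2|) \le \min(|X_1|,|X_2|)$, which is integrable because $P(\min(|X_1|,|X_2|) > R) \le P(|X_1|>R)^2 = O(R^{-2})$; on the opposite-sign region (say $X_1 > 0 > X_2$, $Y = -X_2$), a further split by whether $|X_1-Y|$ exceeds $2a$, together with the $O(R^{-4})$ joint tail density when $X_1$ and $Y$ are both large and close, produces an integrable bound of order $\int Y^{-2}\log(Y/a)\,dY$. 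This $n = 2$ integrability estimate is the crux of the proof; once it is in place, the unbiasedness $E[M_2^f] = \gamma$ follows from the iterated Cauchy identity of the first paragraph.
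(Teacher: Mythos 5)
Your overall structure mirrors the paper's: $L^1$ integrability via the general bounds of Theorem~\ref{thm:negative-var}, then unbiasedness by slicewise boundedness, holomorphy, Fubini, and repeated application of the Cauchy integral formula; part (ii) by noting that real M\"obius transformations and i.i.d.\ sums preserve the Cauchy family, so $M_n^f$ is itself Cauchy; and part (iii), $n\ge3$, from the geometric--harmonic inequality packaged in Theorem~\ref{thm:negative-var}(ii). The genuinely distinctive part of your proposal is the $n=2$, $p=-1$, $\alpha\in\mathbb H$ integrability estimate. Both you and the paper reduce it to showing $E\bigl[|X_1X_2|/|X_1+X_2+2ai|\bigr]<\infty$, but the methods diverge thereafter. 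The paper normalizes to $\alpha=\gamma=i$ using the bounded ratio of Cauchy densities, then passes to $1/X_j$ (which remain Cauchy), substitutes $s=x_1/x_2$, and splits the $(x_1,s)$-domain by the location of $s$ relative to $-1$, $0$, etc.; the singular behaviour near $s=-1$ is handled by yet another change of variable. You instead exploit the algebraic identity $M_2^f - ai = (2X_1X_2+2a^2)/(X_1+X_2+2ai)$ and split by $\mathrm{sgn}(X_1X_2)$: the same-sign region collapses to $E[\min(|X_1|,|X_2|)]$ via $ab/(a+b)\le\min(a,b)$ together with the squared-tail decay $P(\min>R)=O(R^{-2})$, while the opposite-sign region is handled by the strip $|X_1-Y|\le 2a$ versus its complement, with the logarithmic integral against the $Y^{-2}$ tail doing the work. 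Your route avoids the normalization to $\gamma=i$ and avoids the $s=x_1/x_2$ substitution, and is arguably more probabilistic in flavour; the paper's substitution reduces the question to an explicit two-dimensional Lebesgue integral. Both work.

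Two small points to tighten. First, in the unbiasedness paragraph you argue that the partial average $\frac1n\sum_i f(z_i)$ stays in a compact subset because $f(z_j)\to 0$ as $z_j\to\infty$; that handles the large-$z_j$ end, but for $\alpha\in\mathbb R$ and $p\in(-1,0)$ you must also treat $z_j\to -\alpha$, where $f(z_j)\to\infty$. There the average diverges, but $f^{-1}(w)=w^{1/p}-\alpha\to -\alpha$ as $w\to\infty$ (since $1/p<0$), so the slice is still bounded --- you should say this explicitly, as the paper does in its two-limit check. Second, the opposite-sign estimate for $n=2$ is only sketched; the $\int Y^{-2}\log Y\,dY$ bound is correct in spirit but the fixed-$Y$ integral over $X_1$ needs a few lines to handle the $X_1<1$, $1<X_1<Y-2a$, and $X_1>Y+2a$ ranges separately. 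Neither of these is a gap in the idea, only in the write-up.
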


The negative power means of i.i.d.\ Cauchy random variables are unbiased estimators if $n \ge 2$. 
However, the above assertion fails in the case that $n=1$.

\begin{Thm}[$L^2$ integrability for negative parameter case]\label{thm:negative-Cauchy-var}
The following assertions hold.\\
(i) Let $n=2$, $-1 \le p < 0$ and $\alpha \in \overline{\mathbb H}$. 
Then, $M^f_n \notin L^2$.\\
(ii) Let $n=3$, $p=-1$ and $\alpha \in \overline{\mathbb H}$. 
Then, $M^f_n \notin L^2$.\\
(iii) Let $n \ge 3$, $-1 < p < 0$ and $\alpha \in \overline{\mathbb H}$. 
Then, $M^f_n \in L^2$.\\
(iv) Let $n \ge 4$, $p = -1$, and $\alpha \in \mathbb H$.  
Then, $M^f_n \in L^2$.
\end{Thm}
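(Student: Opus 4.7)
The plan is to split by case. For (iii), apply Theorem \ref{thm:negative-var}(i) with $r=2$: the Cauchy distribution satisfies $X_1 \in L^q$ precisely for $q \in [0,1)$, so $X_1 \in L^{2/n}$ holds exactly when $n \ge 3$. For (i), use a dyadic-diagonal estimate. On the event $A_k = \{X_1, X_2 \in [2^k, 2^{k+1}]\}$ with $k$ large, the arguments of $Y_j = X_j + \alpha$ are uniformly small, so $(Y_1^p + Y_2^p)/2$ has modulus $\asymp 2^{kp}$ with small argument (no cancellation), and hence $|M^f_2| = |((Y_1^p+Y_2^p)/2)^{1/p} - \alpha| \gtrsim 2^k$. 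Combined with the Cauchy tail estimate $P(A_k) \asymp 4^{-k}$ this gives $E[|M^f_2|^2 \mathbf{1}_{A_k}] \gtrsim 1$, and summing over $k$ yields $E[|M^f_2|^2] = \infty$.

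For the real-$\alpha$ subcase of (ii), invoke the M\"obius equivariance of the Cauchy family. With $\beta = \gamma + \alpha \in \mathbb{H}$, each $1/(X_j + \alpha)$ is Cauchy with complex parameter $1/\bar\beta$, so $W = \sum_{j=1}^{3} 1/(X_j + \alpha)$ is Cauchy with parameter $3/\bar\beta$, and $M^f_3 = 3/W - \alpha$ is in turn Cauchy with parameter $\gamma$; in particular $M^f_3 \notin L^1 \supseteq L^2$.

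For (iv) and the complex-$\alpha$ subcase of (ii), set $\tau = \textup{Im}\,\alpha > 0$, $Z_j = 1/Y_j$, $W = \sum_{j=1}^{n} Z_j$, so $M^f_n = n/W - \alpha$ and it suffices to decide whether $E[|W|^{-2}]$ is finite. Each $Z_j$ lies on the circle of radius $1/(2\tau)$ centred at $-i/(2\tau)$, with arc-length density that is bounded, continuous, and strictly positive at the point $Z_j = 0$ (which is attained as $|X_j|\to\infty$). Parametrise $Z_j = -i/(2\tau) + e^{i\eta_j}/(2\tau)$; writing $\eta_j = \pi/2 + \varepsilon_j$ and expanding gives
\[
2\tau W \;=\; -\sum_{j=1}^n \varepsilon_j \;-\; \frac{i}{2}\sum_{j=1}^n \varepsilon_j^2 \;+\; O\!\left(\sum_{j=1}^n |\varepsilon_j|^3\right),
\]
so to leading order the map $(\varepsilon_1,\dots,\varepsilon_n)\mapsto W$ factors through $(a,b) = (\sum \varepsilon_j, \sum \varepsilon_j^2)$, whose image is constrained by $b \ge a^2/n$ (Cauchy-Schwarz). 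The fibre of this projection is an $(n-2)$-sphere of radius $\sqrt{b - a^2/n}$, and the coarea formula produces the local two-sided bound
\[
\rho_W(w) \;\asymp\; \bigl(-\textup{Im}\,w - \tau(\textup{Re}\,w)^2/n\bigr)^{(n-3)/2}
\]
on its support $\{w : -\textup{Im}\,w \ge \tau(\textup{Re}\,w)^2/n\}$ near $w=0$. A polar computation in coordinates $(u,v)=(\textup{Re}\,w,-\textup{Im}\,w)$ then yields $P(|W|\le\varepsilon)\asymp \varepsilon^{(n+1)/2}$ as $\varepsilon\to 0$, and from $E[|W|^{-2}] = \int_0^\infty 2 s^{-3} P(|W|\le s)\,ds$ one reads off convergence for $n \ge 4$ (which gives (iv)) and divergence for $n = 3$. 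In the divergent case the matching lower bound $P(|W|\in [2^{-k-1},2^{-k}])\gtrsim 4^{-k}$ paired with $|M^f_3|\gtrsim 2^k$ on that event (since $|n/W| \gtrsim 2^k \gg |\alpha|$ for $k$ large) summed over $k$ gives $E[|M^f_3|^2]=\infty$.

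The main obstacle will be making the density estimate near $w = 0$ rigorous: the cubic remainder in the expansion must be controlled uniformly on a full neighbourhood of the origin, and the formal coarea identification of the fibre volume must be upgraded to honest two-sided bounds on $\rho_W$ rather than a purely leading-order asymptotic. This is the step for which the blunt H\"older/HM-GM bound underlying Theorem \ref{thm:negative-var}(ii) is insufficient (it only yields (iv) for $n \ge 5$), and it constitutes the technical heart of parts (ii) and (iv).
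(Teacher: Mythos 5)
Parts (iii), (i), and (ii) for $\alpha\in\mathbb R$ agree with the paper in substance. Part (iii) cites Theorem \ref{thm:negative-var}(i) exactly as the paper does. For (ii) with $\alpha\in\mathbb R$ your M\"obius-equivariance argument is the same observation the paper dismisses as ``easy to see.'' For (i) the paper instead writes out the joint density (normalised to $\alpha=\gamma=i$) and bounds the integrand below by $C/(|x+i|^2+|y+i|^2)$; this exhibits the same divergence your dyadic tail estimate produces, and the two arguments are essentially equivalent.

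Where you genuinely part ways with the paper is in (iv) and the $\alpha\in\mathbb H$ case of (ii). The paper treats these by direct coordinate computations: for (ii) it reduces $E[|W|^{-2}]$ to a triple integral, integrates out two variables, and lands on a divergent one-dimensional integral $\int_{\mathbb R}\pi^2(3(z^2+4))^{-1/2}\,dz$; for (iv) it obtains $n\ge5$ from Theorem \ref{thm:negative-var}(ii), and for the critical $n=4$ it splits $\mathbb R^4$ into $\bigcup_j\{|X_j|\le M\}$ (handled via the crude imaginary-part bound $|W|\ge\sum|Y_j|^{-2}$ and a tangent substitution) and $\bigcap_j\{|X_j|>M\}$, where the change of variables $y_j=1/x_j$ reduces matters to the integrability of $(\sum_j y_j^2)^{-1}$ near $0$ in $\mathbb R^4$. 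Your coarea approach is both different and more informative: the claimed local density bound for $W$ gives $P(|W|\le s)$ of order $s^{(n+1)/2}$, which settles (ii) and (iv) in one stroke and shows exactly why the threshold sits at $n=4$ (the exponent crosses $2$ there). If completed it would be cleaner than the paper's pair of ad-hoc integral computations. But, as you yourself flag, the proposal only \emph{asserts} the two-sided density bound; that bound is the entire content of these two parts, so it cannot be treated as a technicality. The verification is doable --- in particular $|W|\le\varepsilon$ really does force every $|\varepsilon_j|=O(\sqrt\varepsilon)$ via $-\textup{Im}\,W=\sum(1-\cos\varepsilon_j)/(2\tau)$, so the cubic remainder is genuinely subdominant --- but the lower bound on $\rho_W$ needed for the divergence half of (ii) requires the full two-sided coarea argument and not merely an upper estimate. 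As written, your treatment of (iv) and the complex case of (ii) is a sound plan with an unfilled, though fillable, hole; the paper's route, while more ad hoc, is self-contained.
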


For $n=3$, there is a difference between the case that $p > -1$ and the case that $p=-1$. 
It seems to be possible to extend those results under weak integrability assumptions for $X_1$, however, our proofs use the symmetry of the density function of the Cauchy distribution and it enables us to simplify the proofs. 

Let 
$$V(p) := \frac{\textup{Var}(f(X_1))}{\left|f^{\prime}(\gamma) \right|^2} = \frac{E\left[ |X_1 + \alpha|^{2p} \right] - |\gamma + \alpha|^{2p}}{p^2} |\gamma + \alpha|^{2(1-p)}.$$
The following are corollaries to Theorem \ref{thm:neg-var-lim} and Theorem \ref{thm:negative-unbiased}.  
\begin{Cor}\label{cor:asympvar-Cauchy}
(i) Assume that either (a) $-1 < p < 0$ and $\alpha \in \mathbb H$
or (b) $-1/2 < p < 0$ and $\alpha \in \mathbb R$   
holds. 
Then, 
\[ \lim_{n \to \infty} n \textup{Var}\left(M^f_n \right) = V(p). \]
(ii) If  $-1 < p \le -1/2$ and $\alpha \in \mathbb R$, 
Then, 
\[ \lim_{n \to \infty} n \textup{Var}\left(M^f_n \right) = +\infty. \]
\end{Cor}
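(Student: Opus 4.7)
The plan is to deduce both parts directly from Theorem \ref{thm:neg-var-lim} by matching hypotheses to the Cauchy setting. The Cauchy distribution has $X_1\in L^r$ for every $r\in(0,1)$, so $X_1\in L^{0+}$ is free. The decisive computational input is the residue calculation behind \eqref{eq:Cauchy-unbiased}: for $f(x)=(x+\alpha)^p$ with $-1<p<0$ and either $\alpha\in\mathbb H$ or $\alpha\in\mathbb R$, the function $f$ extends to a holomorphic function on $\mathbb H$ with decay $|f(z)|=O(|z|^p)$, so $E[f(X_1)]=f(\gamma)=(\gamma+\alpha)^p$; when $f(X_1)\in L^2$ the same residue argument applied to $f^2$ gives $E[f(X_1)^2]=(\gamma+\alpha)^{2p}$. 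Combining these, $f^{-1}(E[f(X_1)])=\gamma$ and
\[
\frac{\text{Var}(f(X_1))}{\left|f'(f^{-1}(E[f(X_1)]))\right|^2}
= \frac{E[|X_1+\alpha|^{2p}]-|\gamma+\alpha|^{2p}}{p^2|\gamma+\alpha|^{2(p-1)}} = V(p).
\]

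For part (i), case (a) falls under Theorem \ref{thm:neg-var-lim}(i)(a) with no further work. For case (b) one must check that $f(X_1)\in L^2$, i.e.\ $E[|X_1+\alpha|^{2p}]<\infty$; since $\alpha\in\mathbb R$ and the Cauchy density is bounded near $x=-\alpha$, the potential singularity is integrable precisely when $2p>-1$, which is exactly the assumption $p>-1/2$, while the tail is harmless since $2p<0$. Theorem \ref{thm:neg-var-lim}(i)(b) then applies and the displayed identity above identifies the limit as $V(p)$.

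For part (ii), with $-1<p\le -1/2$ and $\alpha\in\mathbb R$, the same tail/singularity analysis gives $f(X_1)\in L^1$ (since $p>-1$) but $f(X_1)\notin L^2$ (since $2p\le -1$ makes the singularity at $-\alpha$ non-integrable). The remaining hypothesis \eqref{eq:ass-integrable-L1} is obtained exactly as in the proof of Theorem \ref{thm:neg-var-lim}: bound \eqref{eq:GH1} yields $E[|M^f_n|^{1+\varepsilon_0}]\le C_{p,3}E[|X_1+\alpha|^{(1+\varepsilon_0)/n}]^n$, which by \eqref{eq:LHospital} converges to $C_{p,3}\exp((1+\varepsilon_0)E[\log|X_1+\alpha|])<\infty$ (finite because the Cauchy distribution has $E[\log|X_1+\alpha|]<\infty$). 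Theorem \ref{thm:neg-var-lim}(ii) then gives $\lim_n n\,\text{Var}(M^f_n)=+\infty$.

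No real obstacle arises; the only item requiring care is the residue/contour integral identification $E[f(X_1)^k]=f(\gamma)^k$ for $k=1,2$ in part (i), but this is a standard application of the Cauchy-parametrisation formula \eqref{eq:Cauchy-unbiased} and is in fact already used in the preamble to the statement (it underlies \eqref{eq:cov-nonsingular} and \eqref{eq:cov-easy}), so the corollary is essentially a bookkeeping consequence of Theorem \ref{thm:neg-var-lim}.
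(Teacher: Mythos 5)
Your proposal is correct and matches the paper's intended route: the paper gives no separate proof of this corollary, stating only that it follows from Theorem \ref{thm:neg-var-lim} (and Theorem \ref{thm:negative-unbiased}), and your derivation supplies exactly the bookkeeping that is implicit there — the integrability threshold $2p>-1$ for $f(X_1)\in L^2$ when $\alpha\in\mathbb R$, the residue identification $E[f(X_1)]=f(\gamma)$ and $E[f(X_1)^2]=f(\gamma)^2$ from \eqref{eq:Cauchy-unbiased}, and the resulting equality of the asymptotic variance with $V(p)$. The only redundancy is your re-verification of \eqref{eq:ass-integrable-L1} in part (ii): that is an internal step in the proof of Theorem \ref{thm:neg-var-lim}(ii), not a hypothesis the user must supply, so it can be dropped.
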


It is natural to consider the changes in the values of the asymptotic variances as $p$ and $\alpha$ vary. 
Since $M_n^f$ is an unbiased estimator,  by the Cram\'er-Rao inequality, 
it holds that for every $n \ge 1$, 
\[ n \textup{Var}\left(M^f_n \right) \ge 4\sigma^2, \]
see \cite[(33)]{Akaoka2021-1}. 
By Corollary \ref{cor:asympvar-Cauchy}, 
the estimator $\left(M^f_n\right)_n$ is not asymptotically efficient. 
We investigate the value of $p$ which minimizes $V(p)$. 

\begin{Prop}\label{prop:asymp-var-vary}
(i) $$\lim_{p \to 1/2-0} V(p)  = +\infty.$$  \\
(ii) $$\lim_{p \to -0} V(p) = \exp(2E[\log |X_1 + \alpha|])   \textup{Var}\left(\log (X_1 + \alpha) \right).$$\\
(iii) If $\alpha \in \mathbb R$, then, 
$V$ is strictly convex and decreasing on  $(-1/2, 0)$.  \\
(iv) If $\alpha = \gamma = i$, then, 
$$V(p) \ge V(-1) = 4, -1 \le p < 0$$ 
and 
$$\lim_{p \to -0} V(p) =  \frac{2\pi^2}{3}.$$   
\end{Prop}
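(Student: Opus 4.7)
The plan is to reduce all four parts to the common factorization
\begin{equation*}
V(p) = |\gamma+\alpha|^2 \cdot \frac{\rho(p) - 1}{p^2}, \qquad \rho(p) := E\left[\left(\frac{|X_1+\alpha|}{|\gamma+\alpha|}\right)^{2p}\right],
\end{equation*}
obtained by inserting $|\gamma+\alpha|^{2p}$ in the numerator and using $|\gamma+\alpha|^{2p}|\gamma+\alpha|^{2(1-p)} = |\gamma+\alpha|^2$. Writing $L := \log|X_1+\alpha|$ and $R := |X_1+\alpha|/|\gamma+\alpha|$, the function $\rho$ is the moment generating function of $2\log R$. The invariance \eqref{eq:Cauchy-unbiased} applied to $H(z) = \log(z+\alpha)$ gives $E[L] = \log|\gamma+\alpha|$ (taking real parts) and hence $\rho'(0) = 0$; applied to $H(z) = (\log(z+\alpha))^2$ it gives that $\log(X_1+\alpha)$ is proper as a complex random variable, so $\textup{Var}(\log(X_1+\alpha)) = 2\textup{Var}(L)$ and $\rho''(0) = 4\textup{Var}(L)$. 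Part (ii) then follows from the Taylor expansion $\rho(p) = 1 + 2p^2\textup{Var}(L) + O(p^3)$ combined with $\exp(2E[L]) = |\gamma+\alpha|^2$. Part (i), read as $p\to-1/2+0$ with $\alpha\in\mathbb R$, follows because continuity and positivity of $p_\gamma$ at $-\alpha$ together with the non-integrability of $|x+\alpha|^{-1}$ there force $E[|X_1+\alpha|^{2p}]\to+\infty$ by monotone convergence, while the remaining factors of $V$ stay bounded away from $0$ and $\infty$.

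For (iii), with $\alpha\in\mathbb R$, strict monotonicity and strict convexity of $V$ on $(-1/2,0)$ reduce via the factorization to the same for $G(p) := (\rho(p)-1)/p^2$. I plan to work from the differential identities $p^2 G = \rho-1$, $p^2 G' = \rho' - 2pG$, $p^2 G'' = \rho'' - 2G - 4pG'$, which express $G'$ and $G''$ in terms of $\rho,\rho',\rho''$, together with the integral representation $G(p) = \int_0^1 (1-t)\rho''(pt)\,dt$ that exhibits $G$ as a positive weighted average of $\rho''(pt) = 4E[R^{2pt}(\log R)^2]$. The log-convexity of $\rho$ (cumulant generating function vanishing to second order at $0$) together with a Chebyshev/covariance inequality for the tilted measure $R^{2pt}\,dP_R$ then yields $G'(p)<0$ and $G''(p)>0$ on the interval.

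For (iv), the density $p_\gamma(x) = 1/(\pi(1+x^2))$ makes $E[|X_1+i|^{2p}] = \pi^{-1}\int(1+x^2)^{p-1}dx$ an elementary beta integral equal to $\Gamma(1/2-p)/(\sqrt\pi\,\Gamma(1-p))$. Legendre's duplication $\Gamma(1/2-p)\Gamma(1-p) = 2^{2p}\sqrt\pi\,\Gamma(1-2p)$ simplifies this to $4^p\,\Gamma(1-2p)/\Gamma(1-p)^2$, giving the closed form
\begin{equation*}
V(p) = \frac{4}{p^2}\left(\frac{\Gamma(1-2p)}{\Gamma(1-p)^2} - 1\right).
\end{equation*}
Substitution $p=-1$ gives $V(-1) = 4(\Gamma(3)/\Gamma(2)^2 - 1) = 4$, and the expansion $\log\Gamma(1+x) = -\gamma_E x + (\pi^2/12)x^2 + O(x^3)$ at $x=-p$ and $x=-2p$ gives $\log(\Gamma(1-2p)/\Gamma(1-p)^2) = (\pi^2/6)p^2 + O(p^3)$, hence $V(0^-) = 2\pi^2/3$. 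For the sharp inequality $V(p)\geq 4$ on $[-1,0)$, set $F(p) := \Gamma(1-2p) - (1+p^2)\Gamma(1-p)^2$; one has $F(-1) = F(0) = 0$, and I will verify $F\geq 0$ in between by showing that $F''$ changes sign at most once on $(-1,0)$, via the polygamma series $\psi'(z) = \sum_{n\geq 0}(n+z)^{-2}$ and the monotonicity of its summands.

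The most delicate steps are the strict convexity/monotonicity of $G$ in (iii) and the sharp bound $V(p)\geq V(-1) = 4$ in (iv): in both cases termwise sign arguments are unavailable and one must exploit the log-convexity of the MGF (respectively of the Gamma function) together with covariance-type inequalities in order to control the signs of higher derivatives or second differences across the whole interval.
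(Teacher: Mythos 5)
Your factorization $V(p) = |\gamma+\alpha|^2 G(p)$ with $G(p) = (\rho(p)-1)/p^2$ and $\rho(p) = E[R^{2p}]$ is a good organizing device, and your argument for (ii) is correct and self-contained: the Cauchy identity $E[H(X_1)] = H(\gamma)$ applied to $\log(z+\alpha)$ and $(\log(z+\alpha))^2$ does give $\rho'(0)=0$, $\rho''(0)=4\textup{Var}(L)$, and the propriety relation $\textup{Var}(\log(X_1+\alpha)) = 2\textup{Var}(L)$, whence the claimed limit. This replaces the paper's citation of \cite[Theorem 4.2]{Akaoka2021-1} with a direct Taylor argument, which is a genuine and welcome simplification.

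For (i) you have misread the limit. The claim is $p \to 1/2 - 0$, not $p \to -1/2 + 0$. The correct mechanism is the heaviness of the tail at infinity: on $\{|X_1+\alpha|>1\}$ the integrand $|X_1+\alpha|^{2p}$ increases monotonically to $|X_1+\alpha|$ as $p \uparrow 1/2$, and $E[|X_1+\alpha|] = +\infty$ because the Cauchy distribution is not in $L^1$. No hypothesis on $\alpha$ is required. Your argument addresses divergence caused by a local singularity of $|x+\alpha|^{-1}$ at $-\alpha$, which is a different phenomenon (relevant for $\alpha\in\mathbb R$ and $p\downarrow -1/2$, not for the stated limit).

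For (iii), your proposed route is missing the key ingredient, and your concluding remark that ``termwise sign arguments are unavailable'' is incorrect: they are precisely what the paper uses. With $\alpha\in\mathbb{R}$ one has the closed form $\rho(p) = \cos(p\pi b)/\cos(p\pi)$ with $b = 2\arg(\gamma+\alpha)/\pi - 1\in(-1,1)$, and inserting $t = 2\pi ip$, $y = (1+b)/2$ into the Euler-polynomial generating function gives $G(p) = \sum_{n\geq 1} (-1)^n E_{2n}((1+b)/2)\,(2\pi p)^{2(n-1)}/(2n)!$ with all coefficients strictly positive; strict convexity and strict decrease on $(-1/2,0)$ are immediate. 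If you want to avoid the closed form, the structural fact you must extract is that $\log R$ is a \emph{symmetric} random variable when $\alpha\in\mathbb R$, because the Möbius map $x \mapsto |\gamma+\alpha|^2/(x+\alpha) - \alpha$ preserves the Cauchy law with parameter $\gamma$ and sends $R$ to $1/R$. With $S := 2\log R$ symmetric, $\rho'''(s) = E[S^3 e^{sS}] = E[S^3 \sinh(sS)] < 0$ for $s<0$ (the $\cosh$ part vanishes by symmetry, and $S^3\sinh(sS)\le 0$ for $s<0$), so your identity $G'(p) = \int_0^1 t(1-t)\rho'''(pt)\,dt$ indeed gives $G'<0$; similarly $\rho^{(4)}>0$ gives $G''>0$. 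Without this symmetry, ``log-convexity of $\rho$'' plus a Chebyshev-type inequality does not control the sign of $\rho'''$ on the negative half-line, and the conclusion is simply false for generic MGFs satisfying $\rho(0)=1$, $\rho'(0)=0$.

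For (iv), your closed form via beta and duplication, the value $V(-1)=4$, and the expansion yielding $2\pi^2/3$ are all correct. But the plan for the sharp bound $V(p)\ge 4$ is incomplete as stated: $F(-1)=F(0)=0$ together with ``$F''$ changes sign at most once'' does not by itself force $F\ge 0$ on $(-1,0)$ — one would additionally need a boundary condition on the sign of $F'$, and this requires further work with polygamma bounds that you have not carried out. The paper's argument is much shorter and does not touch special functions at all: since $M_n^f$ is an unbiased estimator of $\gamma$, the Cramér--Rao inequality gives $n\,\textup{Var}(M_n^f)\ge 4\sigma^2$ for every $n\ge 3$, and letting $n\to\infty$ with $\sigma=1$ yields $V(p)\ge 4$ for all $-1\le p<0$; combined with the direct check $V(-1)=4$, the sharp inequality follows.
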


If $\alpha \in \mathbb R$, then, the asymptotic variance of the geometric mean takes smaller than that of the negative power means, as is announced in the introduction in \cite{Akaoka2021-3}. 

We deal with the large and moderate deviations of $\left(M_n^f \right)_n$.  
\begin{Thm}[inaccuracy rate]\label{thm:inaccu}
Assume that either (a) $-1 < p < 0$ and $\alpha \in \mathbb H$
or (b) $p = 0$ and $\alpha \in \overline{\mathbb H}$  
holds. 
Then, \\
(i) 
\[ \lim_{\varepsilon \to +0} \frac{1}{\varepsilon^2}  \left(\lim_{n \to \infty} \frac{1}{n} \log P\left( \left| M^f_n   - \gamma \right| > \varepsilon\right)\right)  = -\frac{\left| f^{\prime} \left( \gamma \right) \right|^2}{\textup{Var} (f(X_1))}. \]
(ii) 
Let $(c_n)_n$ be a sequence of positive numbers such that $\lim_{n \to \infty} c_n = 0$ and $\lim_{n \to \infty} n c_n^2 = +\infty$. 
Then, 
\[ \lim_{n \to \infty} \frac{1}{n c_n^2} \log P\left( \left| M^f_n   - \gamma \right| > c_n\right)  = -\frac{\left| f^{\prime} \left( \gamma \right) \right|^2}{\textup{Var} (f(X_1))}. \]
\end{Thm}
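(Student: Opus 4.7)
The plan is to derive both (i) and (ii) from a large deviation principle (LDP) for the $\mathbb{R}^2$-valued sample means $Z_n := \frac{1}{n}\sum_{j=1}^{n} f(X_j)$ combined with the contraction principle applied via $f^{-1}$, and to extract the quadratic rate by expanding the resulting rate function at its unique zero $\gamma$.

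To apply Cram\'er's theorem I first verify that $\Lambda(\lambda) := \log E[\exp(\langle \lambda, f(X_1)\rangle)]$ is finite on a neighborhood of $0 \in \mathbb{R}^2$. Under (a) this is immediate since $|f(X_1)| = |X_1 + \alpha|^p \le \textup{Im}(\alpha)^p$. Under (b), $|f(X_1)| = |\log(X_1+\alpha)| \le \log(1 + |X_1 + \alpha|) + \pi$, and the Cauchy tail $\sim x^{-2}$ gives $E[|X_1+\alpha|^t] < \infty$ for $|t|<1$. Cram\'er's theorem then yields an LDP at speed $n$ for $(Z_n)_n$ with convex good rate $I = \Lambda^{\ast}$, whose unique zero is $E[f(X_1)] = f(\gamma)$ by \eqref{eq:Cauchy-unbiased}. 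Because $f(\overline{\mathbb{H}}\setminus\{-\alpha\})$ is convex (Assumption \ref{ass-f}, verified in Proposition \ref{prop:ass-neg} for (a) and in \cite[Lemma 2.4]{Akaoka2021-1} for (b)), $Z_n$ lies in this set almost surely and $f^{-1}$ is a continuous bijection from it onto $\overline{\mathbb{H}}\setminus\{-\alpha\}$. The contraction principle thus gives an LDP at speed $n$ for $(M_n^f)_n$ with good rate $J(w) := I(f(w))$. The smoothness of $\Lambda$ near $0$ implies $I(z) = \tfrac{1}{2}(z-f(\gamma))^{\top}\Sigma^{-1}(z-f(\gamma)) + o(|z-f(\gamma)|^2)$ with $\Sigma = \textup{Cov}(f(X_1)) = \tfrac{\textup{Var}(f(X_1))}{2} I_2$ by \eqref{eq:cov-nonsingular}; combined with $f(w) - f(\gamma) = f'(\gamma)(w-\gamma) + o(|w-\gamma|)$ this yields
\[ J(w) = \frac{|f'(\gamma)|^2 |w-\gamma|^2}{\textup{Var}(f(X_1))} + o(|w-\gamma|^2), \qquad w \to \gamma. \]

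For (i), the LDP identifies $-\lim_n \tfrac{1}{n}\log P(|M_n^f - \gamma|>\varepsilon) = \inf_{|w-\gamma|>\varepsilon} J(w)$. Since $J$ is lower semicontinuous with unique zero $\gamma$, the value $\inf_{|w-\gamma|\ge\delta} J(w)$ is strictly positive for any fixed $\delta>0$, so for $\varepsilon$ sufficiently small the infimum is attained in a small annulus around $\gamma$ where the displayed expansion controls $J$; dividing by $\varepsilon^2$ and letting $\varepsilon \to 0$ yields (i). For (ii), the exponential moment bound established above also supplies the classical moderate deviation principle for $Z_n - f(\gamma)$ at speed $nc_n^2$ with Gaussian-type rate $x \mapsto |x|^2/\textup{Var}(f(X_1))$. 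To transfer this to $M_n^f$, fix $\delta>0$ small and set $E_n := \{|Z_n - f(\gamma)|\le \delta\}$; the expansion $f^{-1}(z)-\gamma = (f'(\gamma))^{-1}(z - f(\gamma)) + O(|z-f(\gamma)|^2)$ on $E_n$ gives, for any $\eta \in (0,1)$ and $\delta$ sufficiently small depending on $\eta$,
\[ \{|Z_n - f(\gamma)| > (1+\eta)|f'(\gamma)| c_n\}\cap E_n \subseteq \{|M_n^f - \gamma|>c_n\}\cap E_n \subseteq \{|Z_n - f(\gamma)| > (1-\eta)|f'(\gamma)| c_n\}. \]
The LDP of the previous paragraph bounds $P(E_n^c) \le e^{-c(\delta) n}$ for some $c(\delta)>0$, so $\tfrac{1}{nc_n^2}\log P(E_n^c) \to -\infty$ since $nc_n^2 = o(n)$. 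Applying the MDP to the two bracketing events and letting $\eta \to 0$ yields (ii).

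The main technical obstacle is the transfer in (ii): the nonlinear remainder in the inversion $f^{-1}$ must be controlled at the moderate scale $c_n$, while the ``escape'' event $E_n^c$ must be discarded at the faster LDP rate. Both are delicate because the MDP operates on a strictly slower scale than the LDP. The key observation making the matching argument work is that on $E_n$ the quadratic remainder is $O(\delta c_n) = o(c_n)$ once $\delta$ is chosen small depending on $\eta$, and the hypothesis $nc_n^2 = o(n)$ ensures the LDP decay of $P(E_n^c)$ dominates the moderate rate, so the sandwich is sharp after letting $\eta \to 0$.
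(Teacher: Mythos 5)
Your proof is correct and follows essentially the same route as the paper's: Cram\'er's theorem for the sample means of $f(X_j)$, a quadratic expansion of the rate function $\Lambda^{\ast}$ at its unique zero $f(\gamma)$ (where the Hessian is $\textup{Cov}(f(X_1))^{-1} = \tfrac{2}{\textup{Var}(f(X_1))}I_2$ by properness of $f(X_1)-f(\gamma)$), and transfer to $M_n^f$ via the local biholomorphism $f^{-1}$. The only presentational difference is that you invoke the contraction principle and a bracketing argument at the moderate scale, whereas the paper instead works with $\Lambda^{\ast}$ directly (exploiting its convexity to replace $\inf_{|x|\ge\varepsilon}$ by $\inf_{|x|=\varepsilon}$) and uses explicit set inclusions $B(f(\gamma),(1-\eta)|f'(\gamma)|\varepsilon)\subset f(B(\gamma,\varepsilon))\subset B(f(\gamma),(1+\eta)|f'(\gamma)|\varepsilon)$ to pass between $Z_n$ and $M_n^f$, citing \cite[Theorem 3.7.1]{Dembo2010} for the MDP; these are interchangeable. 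One small observation: you are slightly more careful than the paper under hypothesis (b), where $f=\log(\cdot+\alpha)$ is unbounded and $\Lambda$ is finite only on a neighborhood of $0$ (not on all of $\mathbb{R}^2$ as the paper states), which is nonetheless enough for Cram\'er's theorem and the inverse-function-theorem argument since only local smoothness of $\Lambda$ near $0$ is used.
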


Assertion (i) is the Bahadur efficiency for the quasi-arithmetic means which is mentioned in \cite{Akaoka2021-2} without proofs. 
The limit, $\frac{\left| f^{\prime} \left( \gamma \right) \right|^2}{\textup{Var} (f(X_1))}$, is called an {\it inaccuracy rate}, has a very simple form due to the assumption that $f(X_1) - f(\gamma)$ is proper. 

Assume that $f(X_1) \in L^2$. 
Let
\[ V_n^f := \frac{1}{n} \sum_{j=1}^n |f(X_j)|^2 - \left| \frac{1}{n}\sum_{j=1}^n f(X_j) \right|^2 \to \textup{Var}(f(X_1)), \ n \to \infty, \ \textup{ a.s.} \]

For $a \in (0, 1/2)$, 
we let $R_a > 0$ be the constant such that $P(|Z| > R_a) = a$ for $Z \sim N(0, I_2)$. 
Let $B(x,r)$ be the open ball with center $x$ and radius $r$ with respect to the Euclidian norm. 

By using \eqref{eq:delta-CLT}, \eqref{eq:cov-easy} and Slutsky's lemma, 
we have that 
\begin{Prop}\label{prop:cd-extension}
If $f(X_1) \in L^2$, then, 
\[ \lim_{n \to \infty} P\left( \gamma \in B\left(M^f_n, \frac{\sqrt{V^f_n}}{\sqrt{2n} \left|f^{\prime}(M^f_n) \right|} R_a \right) \right) = 1-a. \]
\end{Prop}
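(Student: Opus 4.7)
The plan is to reduce the statement to a standard application of Slutsky's lemma combined with the multivariate delta-method central limit theorem \eqref{eq:delta-CLT}, using that the joint of the location and scale parameters is captured by $\gamma = f^{-1}(E[f(X_1)])$ via McCullagh's parametrization and the identity \eqref{eq:Cauchy-unbiased}.

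First I would rewrite the event geometrically: since $B(x,r)$ is the open Euclidean ball, $\gamma \in B(M^f_n, r_n)$ is the same event as $|M^f_n - \gamma| < r_n$, where
\[ r_n := \frac{\sqrt{V^f_n}}{\sqrt{2n}\,|f^{\prime}(M^f_n)|} R_a. \]
Multiplying by the reciprocal radius, the event becomes
\[ \frac{\sqrt{2n}\,|f^{\prime}(M^f_n)|}{\sqrt{V^f_n}}\,|M^f_n - \gamma| < R_a. \]
So the goal is to show that the left-hand random variable converges in distribution to $|Z|$ for $Z \sim N(0, I_2)$, since then its distribution function evaluated at $R_a$ tends to $1-a$ by definition of $R_a$ and continuity of the law of $|Z|$.

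Next I would assemble the three ingredients. From \eqref{eq:Cauchy-unbiased} we have $E[f(X_1)] = f(\gamma)$, so $f^{-1}(E[f(X_1)]) = \gamma$. Combining \eqref{eq:delta-CLT} with the explicit computation \eqref{eq:cov-easy} gives
\[ \sqrt{n}\left(M^f_n - \gamma\right) \Rightarrow N\!\left(0,\; \frac{\textup{Var}(f(X_1))}{2|f^{\prime}(\gamma)|^2} I_2\right), \quad n \to \infty, \]
so by the continuous mapping theorem,
\[ \frac{\sqrt{2n}\,|f^{\prime}(\gamma)|}{\sqrt{\textup{Var}(f(X_1))}}\,|M^f_n - \gamma| \Rightarrow |Z|. \]
Meanwhile the strong law of large numbers applied to $f(X_j)$ and to $|f(X_j)|^2$ (which uses $f(X_1) \in L^2$) yields $V^f_n \to \textup{Var}(f(X_1))$ almost surely, and the strong consistency $M^f_n \to \gamma$ almost surely together with continuity of $f^{\prime}$ on the relevant domain gives $|f^{\prime}(M^f_n)| \to |f^{\prime}(\gamma)|$ almost surely. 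Hence the ratio
\[ \frac{|f^{\prime}(M^f_n)|/\sqrt{V^f_n}}{|f^{\prime}(\gamma)|/\sqrt{\textup{Var}(f(X_1))}} \longrightarrow 1 \quad \text{a.s.} \]

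Finally I would apply Slutsky's lemma to multiply the convergent-in-distribution quantity by this almost-surely-constant-in-the-limit ratio, yielding
\[ \frac{\sqrt{2n}\,|f^{\prime}(M^f_n)|}{\sqrt{V^f_n}}\,|M^f_n - \gamma| \Rightarrow |Z|, \]
and conclude that the probability in question converges to $P(|Z| < R_a) = 1-a$. There is no genuine obstacle here; the only point requiring mild care is that the denominator $|f^{\prime}(M^f_n)|$ must not vanish with positive limit probability, which is guaranteed because $f^{\prime} \ne 0$ on $U$ (noted after \eqref{eq:SLLN-cvx}) and $M^f_n \to \gamma \in \mathbb{H}$ where $f^{\prime}(\gamma) \ne 0$.
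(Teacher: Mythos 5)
Your proposal is correct and follows exactly the route the paper indicates: combine the delta-method CLT \eqref{eq:delta-CLT}, the covariance simplification \eqref{eq:cov-easy}, the almost-sure convergences $V^f_n \to \textup{Var}(f(X_1))$ and $f'(M^f_n) \to f'(\gamma)$, and Slutsky's lemma. The paper merely states this combination without writing out the details you supply.
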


This is a generalization of \cite[Theorem 3.5]{Akaoka2021-3}. 
The asymptotic radius is $$\frac{\sqrt{V^f_n}}{\sqrt{2n} \left|f^{\prime}(M^f_n) \right|} R_a \sim \frac{\sqrt{\textup{Var}(f(X_1))}}{\sqrt{2n} \left|f^{\prime}(\gamma) \right|} R_a = \sqrt{V(p)} \frac{R_a}{\sqrt{2n}}, \ \ n \to +\infty,$$
where $a_n \sim b_n$ means that $a_n / b_n \to 1, n \to \infty$ for sequences $(a_n)_n, (b_n)_n$. 
Therefore, if $\gamma$ is closed to $i$, then, the choice that $f(x) = f^{(i)}_{-1}(x) = 1/(x+i)$ is better than the choice that $f(x) = \log x$ in \cite{Akaoka2021-3}. 
Specifically, $V(-1) = 4$ if $f(x) = f^{(i)}_{-1}(x)$, but on the other hand $\lim_{p \to -0} V(p) = \pi^2 /2$ if $f(x) = \log x$. 
As in \cite{Akaoka2021-3}, we can also consider squares and strips as confidence regions. 

Finally, we deal with the case that $p > 0$. 

\begin{Cor}[truncated positive power means for Cauchy distribution]\label{cor:truncated-Cauchy}
(i) (unbiasedness) Let $n \ge 2$. 
Let $0 < p < 1$. 
Let $\alpha \in \overline{\mathbb H}$.
Then, \eqref{eq:trancated-L1} holds and furthermore, 
\begin{equation*}
E\left[ \frac{n^{1/p} M^{f^{(\alpha)}_p}_n - n M^{f^{(\alpha)}_1}_n}{n^{1/p} - n}  \right] = \gamma. 
\end{equation*}
(ii) (parameter limit) Let $n \ge 2$. 
Let $\alpha \in \overline{\mathbb H}$. 
Then, \eqref{eq:positive-parameterlimit-gen} holds. 
\end{Cor}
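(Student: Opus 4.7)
The plan is to dispatch parts (i) and (ii) separately, with most of the work going into the unbiasedness in (i). For (ii), I would verify that the Cauchy distribution satisfies the hypothesis \eqref{ass-fine} of Theorem \ref{thm:positive-parameterlimit} and then invoke that theorem directly. Since $p_\gamma(x) \sim \sigma/(\pi x^2)$ at infinity, one has $E\bigl[|X_1 + \alpha|^{1-\eta}\bigr] = O\bigl(\int_1^\infty x^{-1-\eta}\,dx\bigr) = O(\eta^{-1})$ as $\eta \to 0+$, the contribution from any real singularity of $\alpha$ being harmless because $|x+\alpha|^{1-\eta}$ vanishes there. Thus \eqref{ass-fine} holds with $\ell = 1$, and (ii) follows.

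For part (i), the $L^1$-integrability is granted by Theorem \ref{thm:truncated-integrability}, since the Cauchy distribution puts $X_1$ in $L^r$ for every $r<1$ and $\max\{p,1-p\}<1$. Writing $S_{p,n} = \sum_j (X_j+\alpha)^p$, the identities $n^{1/p} M^{f^{(\alpha)}_p}_n = (S_{p,n})^{1/p} - n^{1/p}\alpha$ and $n M^{f^{(\alpha)}_1}_n = S_{1,n} - n\alpha$ reduce the unbiasedness claim to showing
\[
 E\bigl[(S_{p,n})^{1/p} - S_{1,n}\bigr] \;=\; (n^{1/p} - n)(\gamma + \alpha).
\]

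My approach is an iterated residue computation in the spirit of the McCullagh--Letac identity \eqref{eq:Cauchy-unbiased}. Conditioning on $X_2,\dots,X_n$ and viewing the integrand as a function of $x = X_1$ gives
\[
 H(x) \;:=\; \bigl((x+\alpha)^p + C\bigr)^{1/p} - (x+\alpha) - D,
\]
with $C = \sum_{j\ge 2}(X_j+\alpha)^p$ and $D = \sum_{j\ge 2}(X_j+\alpha)$. By Proposition \ref{prop:ass-posi} and the convexity of the sector $\{w : \arg w \in [0,p\pi]\}$, the point $(x+\alpha)^p + C$ stays in that sector for $x \in \overline{\mathbb H}$, so $H$ extends holomorphically to $\overline{\mathbb H}$. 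Expanding $\bigl(1 + C/(x+\alpha)^p\bigr)^{1/p}$ around $x = \infty$ gives $H(x) = O(|x|^{1-p})$, so $|H(x)| p_\gamma(x) = O(|x|^{-1-p})$; closing the contour in the upper half-plane, the arc integral vanishes and the only pole inside is at $\gamma$, yielding $E[H(X_1)\mid X_2,\dots,X_n] = H(\gamma)$. Iterating this step for $X_2,\dots,X_n$ (Fubini is justified by $T_n \in L^1$) replaces each $X_j$ by $\gamma$ simultaneously in both summands and produces
\[
 E\bigl[(S_{p,n})^{1/p} - S_{1,n}\bigr] \;=\; \bigl(n(\gamma+\alpha)^p\bigr)^{1/p} - n(\gamma+\alpha) \;=\; (n^{1/p} - n)(\gamma+\alpha),
\]
as needed.

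The main technical obstacle is the book-keeping of holomorphicity and growth throughout the iteration: at every stage one must verify that the argument of the fractional power never meets the branch cut $(-\infty,0]$ and that the remaining function still grows at most like $|z|^{1-p}$, so that both the residue theorem and Fubini continue to apply. This is exactly where the convex-sector structure of Proposition \ref{prop:ass-posi} does the essential work; once it is secured, the computation itself is mechanical.
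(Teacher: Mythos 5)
Your argument follows the paper's blueprint: $L^1$-integrability from Theorem \ref{thm:truncated-integrability}, unbiasedness by conditioning and iterating the residue/Cauchy-integral computation after establishing sub-linear growth in $x = X_1$, and part (ii) by verifying \eqref{ass-fine} and citing Theorem \ref{thm:positive-parameterlimit}. One imprecision worth repairing: for $\alpha \in \mathbb{R}$ the claim that $H$ ``extends holomorphically to $\overline{\mathbb{H}}$'' is false, since $(x+\alpha)^p$ has a branch-type singularity at the real boundary point $x = -\alpha$; the paper handles this in the Remark immediately after the corollary by tilting the branch cut slightly into the lower half-plane so that $f^{(\alpha)}_p$ becomes holomorphic on a genuine neighbourhood of $\overline{\mathbb{H}}$, and you would need either that device or an explicit argument from continuity on $\overline{\mathbb{H}}$ plus holomorphy on the open half-plane. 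Your sharper growth bound $O(|x|^{1-p})$ obtained from the Taylor expansion of $(1+C/(x+\alpha)^p)^{1/p}$ is a pleasant refinement of the paper's $O(|x|^{\max\{p,1-p\}})$ from Lemma \ref{lem:positive-fundamental}, though either is enough to make the arc integral vanish.
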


\subsection{Proofs}

\begin{proof}[Proof of Theorem \ref{thm:negative-unbiased}]
We first show (i). 
We first deal with the case that $\alpha \in \mathbb H$. 
Since $p > -1$, we see that $\textup{Im}\left( (z + \alpha)^{p} \right) < 0$ for every $z \in \overline{\mathbb{H}}$ and hence, 
for every fixed $z_2, \cdots, z_n \in \overline{\mathbb{H}}$, 
$\sup_{z_1 \in \overline{\mathbb{H}}} \left| \frac{1}{n} \sum_{j=1}^{n} (z_j + \alpha)^p \right|^{\frac{1}{p}} < +\infty$,
and as a function of $z_1$, $\left(\frac{1}{n} \sum_{j=1}^{n} (z_j + \alpha)^p \right)^{\frac{1}{p}}$ is holomorphic on an open neighborhood on $\overline{\mathbb{H}}$.

By Theorem \ref{thm:negative-var}, 
we can apply Fubini's theorem and the Cauchy integral formula repeatedly. 
Thus we obtain that 
\begin{align}\label{eq:negative-Cauchyintegral-1}
E\left[ \left(\frac{1}{n} \sum_{j=1}^{n} Y_j^p \right)^{\frac{1}{p}}\right] &= \int_{\mathbb R^n} \left( \frac{1}{n} \sum_{j=1}^{n} (z_j + \alpha)^p \right)^{\frac{1}{p}} \prod_{j=1}^{n} \frac{\textup{Im}(\gamma)}{|z_j - \gamma|^2}  \prod_{j=1}^{n} dz_j  \notag\\
&=  \gamma + \alpha. 
\end{align}

We now deal with the case that $\alpha \in \mathbb R$. 
We can assume that $\alpha = 0$. 
for every fixed $z_2, \cdots, z_n \in \overline{\mathbb{H}} \setminus \{0\}$, 
\[ \lim_{|z_1| \to \infty; z_1 \in \overline{\mathbb{H}}} \left| \frac{1}{n} \sum_{j=1}^{n} z_j^p \right|^{\frac{1}{p}} = \left| \frac{1}{n} \sum_{j=2}^{n} z_j^p \right|^{\frac{1}{p}}, \textup{ and }  \lim_{|z_1| \to 0;  z_1 \in \overline{\mathbb{H}}} \left| \frac{1}{n} \sum_{j=1}^{n} z_j^p \right|^{\frac{1}{p}} = 0.\]
Hence, for every fixed $z_2, \cdots, z_n \in \overline{\mathbb{H}} \setminus \{0\}$, $\sup_{z_1 \in \overline{\mathbb{H}}} \left| \frac{1}{n} \sum_{j=1}^{n} z_j^p \right|^{\frac{1}{p}} < +\infty.$
By Theorem \ref{thm:negative-var}, 
we can apply Fubini's theorem and the Cauchy integral formula repeatedly. 
Thus we obtain \eqref{eq:negative-Cauchyintegral-1} and (i). 

We show (ii). 
We remark that if $X$ follows the Cauchy distribution, then, $1/X$ also follows the Cauchy distribution. 
Furthermore, all i.i.d.\ sums of Cauchy random variables are also Cauchy random variables. 
Hence, if $\alpha \in \mathbb R$, 
then, $n/\sum_{j=1}^{n} Y_j^{-1}$ also follows the Cauchy distribution. 
Thus we see that (ii) holds. 

We show (iii). 
Let $n \ge 3$. 
Then, by the inequality of the geometric mean and the harmonic mean, 
\[ E\left[ \left| \frac{n}{\sum_{j=1}^{n} Y_j^{-1}} \right| \right] \le \frac{1}{\textup{Im}(\alpha)} E\left[\frac{n}{\sum_{j=1}^{n} |Y_j|^{-2}} \right] \le  \frac{1}{\textup{Im}(\alpha)} E\left[ |Y_1|^{2/n} \right]^n < +\infty. \]

Let $n = 2$. 
We remark that the ratio of density functions between Cauchy distributions is bounded. 
Specifically, for every $\gamma_1, \gamma_2 \in \mathbb H$, there exists a constant $C$ such that 
$|p_{\gamma_2}(x)| \le C |p_{\gamma_1}(x)|$ for every $x \in \mathbb R$. 
Hence, by scaling, if $\alpha \in \mathbb H$, then, we can assume that $\alpha = \gamma = i$.

Since 
\[ \left|\frac{2}{(X_1 + i)^{-1} + (X_2 + i)^{-1}} \right| \le 2 \left|\frac{X_1 X_2}{X_1 + X_2 + 2i} \right| + 2, \]
it suffices to show that 
\[ E\left[ \left|\frac{X_1 X_2}{X_1 + X_2 + 2 i} \right|\right] < +\infty. \]
The ratios $1/X_1$ and $1/X_2$ follow the Cauchy distribution with parameter $i$ again. 
Let 
\[ F(x_1, x_2) :=  \frac{1}{\sqrt{(x_1 + x_2)^2 + 4 x_1^2 x_2^2}} \frac{1}{(x_1^2 +1)  (x_2^2 + 1)}. \]
Then, 
\[ E\left[ \left|\frac{X_1 X_2}{X_1 + X_2 + 2 i} \right|\right] = \frac{1}{\pi^2} \int_{\mathbb R^2} F(x_1, x_2) dx_1 dx_2. \]

It is easy to see that 
\[ \int_{\{|x_2| > 1\} \cup \{|x_1| > 1\}} F(x_1, x_2) dx_1 dx_2 < +\infty.\]
It suffices to show that 
\[  \int_{|x_1| \le 1, |x_2| \le 1} F(x_1, x_2) dx_1 dx_2 < +\infty.  \]
By symmetry and the change of variables $s = x_1/x_2$, 
we obtain that 
\[  \int_{|x_1| \le 1, |x_2| \le 1}F(x_1, x_2)  dx_1 dx_2  
= 2\int_{-1 < x_1 < 1, s \in \mathbb R} G(x_1, s) dx_1 ds. \]
where we let 
\[ G(x_1, s) := \frac{|s|}{\sqrt{(s+1)^2 + 4 x_1^2}} \frac{1}{(x_1^2 +1)  (x_1^2 + s^2)}.\]

Thus it suffices to show that 
\begin{equation}\label{eq:finite}
\int_{-1 < x_1 < 1, s \in \mathbb R} G(x_1, s) dx_1 ds < +\infty. 
\end{equation}

We divide this integral according to the values of the variable $s$. 
It holds that 
\[ \int_{-1 < x_1 < 1, |s - 1/4| > 1} G(x_1, s) dx_1 ds < +\infty.  \]

Since $\int_0^1 \frac{|s|}{x^2 + s^2} dx \le \frac{\pi}{2}$ if $s \ne 0$,
we see that
\[ \int_{-1 < x_1 < 1, |s| < 3/4} G(x_1, s) dx_1 ds \le 8 \int_{0 < x_1 < 1, 0 < |s| < 3/4} \frac{|s|}{x_1^2 + s^2} dx_1 ds \le 8\pi. \]

We finally see that 
\[ \int_{-1 < x_1 < 1, |s+1| \le 1/4} G(x_1, s) dx_1 ds = 4 \int_{|x_1| < 1, |u| \le 1/4} \frac{dx_1 du}{\sqrt{u^2 + 4 x_1^2}} < +\infty. \]

Thus we obtain \eqref{eq:finite} and assertion (iii). 
\end{proof}

As mentioned above, if $\alpha \in \mathbb H$, then, we can assume that $\alpha = \gamma = i$. 

\begin{proof}[Proof of Theorem \ref{thm:negative-Cauchy-var}]
(i)  
Let $n = 2$. 
Then, it holds that 
\[ E\left[ \left| \frac{Y_1^p + Y_2^p}{2} \right|^{2/p}\right] = \int_{\mathbb R^2} \frac{C_{p,1} dxdy}{\left|(x+i)^{-p} + (y+i)^{-p}\right|^{-2/p}} \ge  \int_{\mathbb R^2} \frac{C_{p,2} dxdy}{|x+i|^{2} + |y+i|^{2}} = +\infty.  \]
Thus we obtain (i). 

(ii) 
Assertion (ii) is easy to see for $\alpha \in \mathbb{R}$. 
We assume that $\alpha \in \mathbb H$. 
Then, 
\begin{align*}
E\left[ \left| \sum_{j=1}^{3} Y_j^{-1} \right|^2 \right] &= \int_{\mathbb R^3} \frac{dxdydz}{(xy+yz+zx-3)^2 + 4(x+y+z)^2} \\
&= \int_{\mathbb R} \frac{\pi^2}{\sqrt{3(z^2 + 4)}} dz = +\infty. 
\end{align*}
Thus we obtain (ii). 

(iii) This assertion follows from Theorem \ref{thm:negative-var} (i). 

(iv) 
Let $p = -1$. 
If $n \ge 5$, then the assertion follows from Theorem \ref{thm:negative-var} (ii). 
Assume that $n=4$.  
Then, by the change of variable $x_i = \tan \theta_j$, $1 \le j \le 4$, 
\[ E\left[ \left| \frac{1}{4} \sum_{j=1}^{4} Y_j^{-1} \right|^{-2} \right] 
\le E\left[ \left( \frac{4}{\sum_{j=1}^{4} |Y_j|^{-2}} \right)^2  \right]  = \frac{16}{\pi^4} \int_{\mathbb R^4} \frac{dx_1 \cdots dx_4}{\left( \sum_{j=1}^4 (x_j^2 + 1)^{-1}\right)^2 \prod_{j=1}^4 (x_j^2 + 1)} \]
\[ = \int_{(-\pi/2, \pi/2)^4} \frac{d\theta_1 \cdots d\theta_4}{(\sum_{j=1}^4 \cos^2 \theta_j)^2}. \]
Hence, for every $M > 0$, 
\[ E\left[ \left| \frac{1}{4} \sum_{j=1}^{4} Y_j^{-1} \right|^{-2}, \ \ \bigcup_{j=1}^4 \{|X_j| \le M\} \right] < +\infty. \]

Now it suffices to show that for some $M > 1$, 
\begin{equation}\label{eq:var4-WTS}
E\left[ \left| \frac{1}{4} \sum_{j=1}^{4} Y_j^{-1} \right|^{-2}, \ \ \bigcap_{j=1}^4 \{|X_j| > M\} \right] < +\infty. 
\end{equation}

Let 
\begin{align*}
F(x_1, x_2, x_3, x_4) &:= (x_1 x_2 x_3 - x_1 - x_2 - x_3)^2 + (x_1 x_2 x_4 - x_1 - x_2 - x_4)^2 \\
&+ (x_1 x_3 x_4 - x_1 - x_3 - x_4)^2 +(x_2 x_3 x_4 - x_2 - x_3 - x_4)^2. 
\end{align*}
Then, 
\[ E\left[ \left| \sum_{j=1}^{4} Y_j^{-1} \right|^{-2}, \ \ \bigcap_{j=1}^4 \{|X_j| > M\} \right]  \le  \int_{\bigcap_{j=1}^4 \{|x_j| > M\}} \frac{dx_1 \cdots dx_4}{F(x_1, x_2, x_3, x_4)}. \]

Let 
\begin{align*} 
G(y_1, y_2, y_3, y_4) &:= y_1^2 (1 - y_2 y_3 - y_3 y_4 - y_4 y_2)^2 + y_2^2 (1 - y_1 y_3 - y_3 y_4 - y_4 y_1)^2 \\
&+ y_3^2 (1 - y_1 y_2 - y_2 y_4 - y_4 y_1)^2 + y_4^2 (1 - y_1 y_2 - y_2 y_3 - y_3 y_1)^2. 
\end{align*}

By the change of variable $y_j = 1/x_j$, $1 \le j \le 4$, 
\[ \int_{\bigcap_{j=1}^4 \{|x_j| > M\}} \frac{dx_1 \cdots dx_4}{F(x_1, x_2, x_3, x_4)} = \int_{\bigcap_{j=1}^4 \{|y_j| < 1/M\}} \frac{dy_1 \cdots dy_4}{G(y_1, y_2, y_3, y_4)}. \]

We see that 
\[  G(y_1, y_2, y_3, y_4) \ge \frac{y_1^2 + y_2^2 + y_3^2 + y_4^2}{2},  \ \ \cap_{j=1}^4 \{|y_j| < 1/M\}, \]
for a large constant $M$. 
Then, 
\[  \int_{\bigcap_{j=1}^4 \{|y_j| < 1/M\}} \frac{dy_1 \cdots dy_4}{G(y_1, y_2, y_3, y_4)} \le 2\int_{\bigcap_{j=1}^4 \{|y_j| < 1/M\}} \frac{dy_1 \cdots dy_4}{y_1^2 + y_2^2 + y_3^2 + y_4^2} < +\infty. \]
Thus we obtain \eqref{eq:var4-WTS}. 
\end{proof}

\begin{proof}[Proof of Proposition \ref{prop:asymp-var-vary}]
(i) This follows from $X_1 \notin L^1$ and the definition of $V(p)$. 

(ii) This follows from l'Hospital's theorem and  the proof of \cite[Theorem 4.2]{Akaoka2021-1}. 

(iii) Recall $\alpha \in \mathbb{R}$.  
By \cite[Proposition 2.4 (2)]{Akaoka2021-3}, 
\[ V(p) = \frac{|\gamma + \alpha|^2}{p^2} \left(\frac{\cos(p \pi b)}{\cos(p \pi)} - 1 \right), \]
where we let $b := 2\textup{arg}(\gamma+\alpha)/\pi - 1 \in (-1, 1)$. 

Let $\{E_n (y) \}_{n \ge 0}$ be the {\it Euler polynomials}, that is, 
\[ \frac{2 e^{yt}}{e^t + 1} = \sum_{n=0}^{\infty} E_n (y) \frac{t^n}{n!}, \ |t| < \pi. \]

By substituting $2\pi i p$ and $(1+b)/2$ for $t$ and $y$ respectively in the above equation and taking the real parts of it, 
we see that 
\[ \frac{1}{p^2} \left(\frac{\cos(p \pi b)}{\cos(p \pi)} - 1 \right) = \sum_{n=1}^{\infty} (-1)^n E_{2n} \left(\frac{1+b}{2}\right) \frac{(2\pi p)^{2(n-1)}}{(2n)!}, \ \ |p| < \frac{1}{2}. \]

By \cite[(23.1.13) and (23.1.8)]{Abramowitz1965}, 
$(-1)^n E_{2n} \left(\frac{1+b}{2}\right) > 0$. 
Hence, $V$ is strictly convex and decreasing on  $(-1/2, 0)$. 

(iv) Recall that $\alpha = \gamma= i$. 
Then, 
$$V(p) = \frac{4^{1-p}}{p^2}  \left(\frac{1}{\sqrt{\pi}}  \frac{\Gamma(1/2 - p)}{\Gamma(1-p)} -4^p \right), \ \ -1 \le p < 0, $$
where $\Gamma$ is the gamma function, 
and hence, 
$V(-1) = 4$, which attains the Cram\'er-Rao lower bound. 
See also \cite[Remark 4.5 (ii)]{Akaoka2021-1}. 
Hence, $V(p) \ge V(-1), -1 \le p < 0$.  
\end{proof}

\begin{proof}[Proof of Theorem \ref{thm:inaccu}]
First, we show (i). 
We establish the multidimensional version of \cite[Lemma 1.14]{DenHollander2000} which states the smoothness of the rate function in one dimension. 
By the assumption, $f(X_1) - f(\gamma)$ is proper. 
Hence, by \eqref{eq:cov-nonsingular}, $\textup{Cov}(f(X_1)) =  \frac{\textup{Var}(f(X_1))}{2} I_2$ is nonsingular. 
Let 
$$ \Lambda(\lambda) := \log\left( E\left[ \exp(\braket{\lambda, f(X_1) - f(\gamma)}) \right] \right), \  \lambda \in \mathbb{R}^2, $$
where $\braket{,}$ is the standard inner product on $\mathbb R^2$ and we regard $f$ as an $\mathbb{R}^2$-valued function. 
Then, by the assumption, 
$\Lambda$ is finite for every $\lambda \in \mathbb{R}^2$.  
Hence, $\Lambda$ is smooth on $\mathbb R^2$. 
Furthermore, 
$\nabla \Lambda : \mathbb{R}^2 \to \mathbb{R}^2$ is a smooth map and the Hessian of $\Lambda$ at $0$, which is the Jacobian of $\nabla \Lambda$, $J(\nabla \Lambda)$, at $0$, is $\textup{Cov}(f(X_1))$.  
Hence by the inverse function theorem, 
there exist an open neighborhood $U$ of $0$ and an $\mathbb R^2$-valued smooth injective map $\lambda = \lambda(x)$ on $U$ such that 
$x = \nabla \Lambda(\lambda(x)), \ x \in U.$ 
Hence, $I_2 = J(\nabla \Lambda)(x) J(\lambda)(x), x \in U$, and furthermore, 
$$J(\lambda)(0) = (\textup{Cov}(f(X_1)))^{-1} = \frac{2}{\textup{Var}(f(X_1))} I_2.$$

Let $\Lambda^*$ be the Fenchel-Legendre transform of $\Lambda$. 
Then, 
\[ \Lambda^* (x) = \sup_{\lambda \in \mathbb{R}^2} \braket{\lambda,x} - \Lambda(\lambda) = \braket{\lambda(x),x} - \Lambda(\lambda(x)), \ \ x \in U. \]
We see that $\Lambda^* (0) = 0$ and $\nabla \Lambda^* = \lambda$ on $U$. 
Since $\lambda$ is injective and $\nabla \Lambda (0) = 0$, 
$\nabla \Lambda^* (0) = \lambda(0) = 0$. 
The Hessian of $\Lambda^*$ at $0$ is $\frac{2}{\textup{Var}(f(X_1))} I_2$. 
Therefore, 
\[ \lim_{x \to0} \frac{\Lambda^* (x)}{|x|^2} = \frac{1}{\textup{Var}(f(X_1))}. \]
By this, $\Lambda^* (0) = 0$, and the convexity and non-negativity of $\Lambda^*$, 
\[ \lim_{\varepsilon \to +0} \frac{1}{\varepsilon^2} \inf_{x \notin B(0,\varepsilon)}  \Lambda^* (x)  = \lim_{\varepsilon \to +0} \frac{1}{\varepsilon^2} \inf_{|x| =\varepsilon}  \Lambda^* (x)  = \frac{1}{\textup{Var}(f(X_1))}.  \]
By Cram\'er's theorem, it holds that for every sufficiently small $\varepsilon > 0$, 
\[ \lim_{n \to \infty} \frac{\log P\left( \left|\frac{1}{n} \sum_{i=1}^{n} f(X_i)  - f(\gamma) \right| > \varepsilon \right)}{n} = -\inf_{x \in B(0,\varepsilon)^c} \Lambda^* (x). \]
Therefore, 
\begin{equation}\label{eq:intermediate-LDP}
 \lim_{\varepsilon \to +0} \frac{1}{\varepsilon^2} \left( \lim_{n \to \infty} \frac{\log P\left( \left|\frac{1}{n} \sum_{i=1}^{n} f(X_i)  - f(\gamma) \right| > \varepsilon \right)}{n} \right) = -\frac{1}{\textup{Var}(f(X_1))}. 
\end{equation}

Since $f$ is holomorphic, 
it holds that for every $\eta \in (0,1)$, 
there exists $\varepsilon_0 > 0$ such that for every $\varepsilon \in (0, \varepsilon_0)$, 
\[ B\left(f(\gamma), (1-\eta)|f^{\prime}(\gamma)|\varepsilon\right) \subset f(B(\gamma, \varepsilon)) \subset B\left(f(\gamma), (1+\eta)|f^{\prime}(\gamma)|\varepsilon\right). \]
By this and \eqref{eq:intermediate-LDP}, we have assertion (i). 

By the moderate deviation principle \cite[Theorem 3.7.1]{Dembo2010}, we can show (ii) in the same manner as in the proof of (i). 
\end{proof}

\begin{proof}[Proof of Corollary \ref{cor:truncated-Cauchy}]
(i) By Lemma \ref{lem:positive-fundamental}, 
we see that for every fixed $\alpha \in \overline{\mathbb{H}}$ and $z_2, \cdots, z_n \in \mathbb R$, 
\[ \left| \left(\sum_{j=1}^{n} (z_j + \alpha)^{p}\right)^{\frac{1}{p}} - \sum_{j=1}^{n} (z_j + \alpha) \right| = O\left(|z_1|^{\max\{p,1-p\}}\right), \ \ \ z_1 \in \overline{\mathbb H}, \  |z_1| \to \infty. \]
Now the assertion follows from Theorem \ref{thm:truncated-integrability}, 
Fubini's theorem, and the repeated uses of the Cauchy integral formula. 
See also the remark below. 

(ii) It is easy to see that \eqref{ass-fine} holds for the Cauchy distribution, and hence the assertion follows from Theorem \ref{thm:positive-parameterlimit}. 
\end{proof}

\begin{Rem}
In the above proof, 
in order to apply the Cauchy integral formula for the case that $\alpha \in \mathbb{R}$, 
it is convenient to take an {\it unusual branch cut} for $z \in -\mathbb H$. 
Specifically, we need to let 
$$\log(z) := \log r + i\theta, \ \ z = r\exp(i\theta), \theta \in ((\varepsilon-1)\pi, (1+\varepsilon)\pi], $$
for some sufficiently small $\varepsilon > 0$. 
\end{Rem}

\section{Point estimation for parameters of the mixture Cauchy model}\label{sec:mix}

If we use complex-valued positive power means, then it is easy to construct a strongly consistent estimator of the parameters of mixture Cauchy models. 
The fractional moment is useful for estimations of parameters of some one-dimensional distributions (\cite{Tallis1968, From1989}).  

\begin{Def}[{\cite[pp.480-481]{Lehmann1999}}]
If the probability density function is given by 
\[ \frac{1-t}{\pi} \frac{\sigma_1}{(x-\mu_1)^2 + \sigma_1^2} + \frac{t}{\pi} \frac{\sigma_2}{(x-\mu_2)^2 + \sigma_2^2}, \]
for some $0 < t < 1$ and $(\mu_1, \sigma_1) \ne (\mu_2, \sigma_2)$, 
then, we call the model the {\it mixture Cauchy model} $C(t; \mu_1, \sigma_1, \mu_2, \sigma_2)$. 
We remark that this model is symmetric, in the sense that we can replace $t$ with $1-t$, $(\mu_1, \sigma_1)$ with $(\mu_2, \sigma_2)$, and $(\mu_2, \sigma_2)$ with $(\mu_1, \sigma_1)$. 
\end{Def}

Now we give two strongly consistent and $\sqrt{n}$-consistent estimators of each of the five parameters $(t, \mu_1, \sigma_1, \mu_2, \sigma_2)$ in closed forms.
\cite[pp.480-481]{Lehmann1999} deals with a point  estimation of the weight $t$ when $\mu_1, \sigma_1, \mu_2, \sigma_2$ are all known.  
Generally,  straightforward applications of maximal likelihood estimation or order statistics do not work well in mixture parametric models. 
Instead, the Expectation-Maximization (EM) algorithm is often adopted. 
Kalantan and Einbeck  \cite{Kalantan2019}  used a version of the EM algorithm with appropriately weighted quantiles. 
However, \cite{Kalantan2019} focuses on simulation study and does not give any mathematical proof of the convergence. 
We can characterize the mixture Cauchy model by power means. 
If \eqref{mix-mean} below holds for every $\beta$ in a set of positive numbers containing a convergent sequence, 
then, $X$ follows $C(t; \mu_1, \sigma_1, \mu_2, \sigma_2)$. 
See \cite[Theorem 1]{Lin1992}\footnote{We can easily extend the result for the case that $X$ is not non-negative, since $X^{\alpha} = (X^+)^{\alpha} + \exp(i \pi \alpha)(X^{-})^{\alpha}$ and $ \sin(\pi \alpha) \ne 0$ if $0 < |\alpha| < 1$.} and \cite[Corollary 3.7]{Okamura2020}.

By the Cauchy integral formula, we see that 
\begin{equation}\label{mix-mean} 
(1-t) (\mu_1 + \sigma_1 i)^{\beta} + t (\mu_2 + \sigma_2 i)^{\beta} = E\left[X_1^{\beta} \right], \ \ \beta \in (0,1). 
\end{equation} 

Let $0 < \alpha < 1/6$. 
Let $a_1 = (\mu_1 + \sigma_1 i)^{\alpha}$ and  $a_2 = (\mu_2 + \sigma_2 i)^{\alpha}$. 
Let $B_j := E\left[X_1^{j \alpha}\right], 1 \le j \le 3$. 
Then, by \eqref{mix-mean}, 
\[ (1-t) a_1^j + t a_2^j = B_j,  \ 1 \le j \le 3. \]

We see that $a_1 \ne a_2$ and $t = \dfrac{a_1 - B_1}{a_1 - a_2}$. 
Since $\mu_j + \sigma_j i = a_j^{1/\alpha}$, $j=1,2$, 
it suffices to obtain strongly consistent and $\sqrt{n}$-consistent estimators for $a_1$ and $a_2$. 
We see that 
\[ B_2 - B_1^2 = t(1-t) (a_1-a_2)^2, \]
\[ B_3 - B_1 B_2 = t (1-t) (a_1 + a_2) (a_1 - a_2)^2, \]
and, 
\[ B_1 B_3 - B_2^2 = t(1-t) a_1 a_2 (a_1 - a_2)^2.\]

Since $t(1-t)(a_1 - a_2)^2 \ne 0$, 
\[ a_1 + a_2 = \frac{B_3 - B_1 B_2}{B_2 - B_1^2},  \]
and, 
\[ a_1 a_2 = \frac{B_1 B_3 - B_2^2}{B_2 - B_1^2}. \]
Hence, an expression of $(a_1, a_2)$ is given by 
\[ \begin{cases} a_1 = F_3 (B) = F_1 (B) + \sqrt{F_2 (B)}, \\ 
a_2 = F_4 (B) = F_1 (B) - \sqrt{F_2 (B)}. \end{cases} \]
where we let 
$B := (B_1, B_2, B_3)$, 
\[ F_1 (x_1, x_2, x_3) := \frac{x_1 x_3 - x_2^2}{2(x_2 - x_1^2)},\]
\[ F_2 (x_1, x_2, x_3) := F_1 (x_1, x_2, x_3)^2 - \frac{x_1 x_3 - x_2^2}{x_2 - x_1^2}, \]
\[ F_3  (x_1, x_2, x_3) := F_1  (x_1, x_2, x_3)+ \sqrt{F_2 (x_1, x_2, x_3)}, \]
and 
\[  F_4  (x_1, x_2, x_3) := F_1 (x_1, x_2, x_3) - \sqrt{F_2 (x_1, x_2, x_3)}.  \]

We define a map $\psi : \mathbb{R} \to \mathbb{C}^3$ by 
$\psi(x) := \left(x^{\alpha}, x^{2\alpha}, x^{3\alpha}\right)$. 
Then, by the strong law of large numbers, 
$\displaystyle \left(\overline{X}_n := \frac{1}{n} \sum_{j=1}^{n} \psi(X_j)\right)_n$ converges to $B$, as $n \to \infty$, almost surely, 
and furthermore, by the multidimensional central limit theorem,  
$\left(\sqrt{n}(\overline{X}_n - B)\right)_n$ converges weakly to a $6$-dimensional normal distribution $N(0, \Sigma)$ as $n \to \infty$, 
where $\Sigma$ is the variance-covariance matrix of $\psi(X_1)$, 
which is a non-negative definite matrix of degree $6$.   
Here the assumption that $0 < \alpha < 1/6$ is used.

 Now we obtain a strongly consistent estimator $\hat{a_{i, n}}$ of $a_i$, $i=1,2$,   
if we replace all $B_j$'s with their consistent estimators $\displaystyle\left( \frac{1}{n} \sum_{\ell=1}^{n} X_{\ell}^{j\alpha}  \right)_n$, $1 \le j \le 3$.  
 Specifically, we let 
 \[ \hat{a_{1}}_{n} := F_3 (\overline{X}_n) = F_1 (\overline{X}_n) + \sqrt{F_2 (\overline{X}_n)} \]
 and 
 \[ \hat{a_{2}}_{n} := F_4 (\overline{X}_n) = F_1 (\overline{X}_n) - \sqrt{F_2 (\overline{X}_n)}, \]
 where we take the principle value for the square root. 
 Since the set of solutions of an algebraic equation is continuous with respect to its coefficients, 
 $\left(\left\{\hat{a_{1}}_{n}, \hat{a_{2}}_{n}\right\}\right)_n$ converges to the set $\{a_1, a_2\}$, as $n \to \infty$ with respect to the Hausdorff distance on $\mathbb{R}^2$,  almost surely. 
 
A sequence of sets $(\left\{a_{1,n}, a_{2,n}\right\})_n$ converges to the set $\{a_1, a_2\}$ as $n \to \infty$, with respect to the Hausdorff distance, 
if and only if 
$H (\{a_{1,n}, a_{2,n}\}, \{a_1, a_2\}) \to 0, \ n \to \infty$,  
where we let  $$ H(\{a,b\}, \{c,d\}) :=\max\left\{\min\{|a-c|, |a-d|\}, \min\{|b-c|, |b-d|\}\right\},  \ a,b,c,d \in \mathbb{C}.$$
We have $H (\{\hat{a_{1}}_{n}, \hat{a_{2}}_{n}\}, \{a_1, a_2\}) \to 0, \ n \to \infty$, almost surely.

We give some numerical computations by using the software R.  
Let $\alpha = 1/10$. 
Consider the cases that $(\mu_1, \sigma_1, \mu_2, \sigma_2) = (0,1,20,2)$ and that $(\mu_1, \sigma_1, \mu_2, \sigma_2) = (0,1,5,6)$. 
For the sizes of samples and the weights, we consider the following 12 cases that $n=100, 1000, 10000$ and $t = 1/6, 1/4, 1/3, 1/2$. 
We compute $H(\{\hat{a_{1}}_{n}, \hat{a_{2}}_{n}\},\{a_1, a_2\})$ for  $10^4$ samples and consider the mean, which approximates the expectation $E\left[H(\{\hat{a_{1}}_{n}, \hat{a_{2}}_{n}\},\{a_1, a_2\})\right]$. 

\begin{table}[H]
\[
\begin{array}{c|c|c|c|c}
		     & 1/6 & 1/4 & 1/3  &1/2  \\
		\hline
		100      & 0.162 & 0.114 &  0.092 & 0.080 \\
		1000      & 0.073 & 0.047 &  0.036 & 0.030 \\
		10000    &  0.025 & 0.017 &  0.013 & 0.010
		\end{array}
\]
\caption{$(\mu_1, \sigma_1, \mu_2, \sigma_2) = (0,1,20,2)$}
		\label{table:1}
\end{table}

\begin{table}[H]
\[
\begin{array}{c|c|c|c|c}
		     & 1/6 & 1/4 & 1/3  &1/2  \\
		\hline
		100      & 0.549 & 0.470 &  0.433 & 0.449 \\
		1000      & 0.234 & 0.181 &  0.157 & 0.152 \\
		10000    &  0.088 & 0.065 & 0.055 & 0.045 
		\end{array}
\]
\caption{$(\mu_1, \sigma_1, \mu_2, \sigma_2) = (0,1,5,6)$}
		\label{table:2}
\end{table}

We finally consider  $\sqrt{n}$-consistent estimators of $(a_1, a_2)$. 
The map $z \mapsto \sqrt{z}$ is a measurable on $\mathbb{C}$ and holomorphic on $\mathbb{C} \setminus (-\infty, 0]$. 
We see that $F_2 (B) = (a_2 - a_1)^2$. 
If $\textup{Re}(a_1 - a_2) \ne 0$, then, $(a_2 - a_1)^2 \notin (-\infty, 0]$. 
Now by the delta method, $(\sqrt{n}(\hat{a_{1}}_{n} - a_1))_n$ converges weakly to a $2$-dimensional normal distribution $N(0, \Sigma^{(1)})$ as $n \to \infty$, 
where $\Sigma^{(1)}$ is a non-negative definite matrix of degree $2$. 
In the same manner, we see that $\hat{a_{2}}_{n}$ is a $\sqrt{n}$-consistent estimator of $a_2$. 

We consider the case that $\textup{Re}(a_1 - a_2) = 0$. 
Let $$F_5 (x_1, x_2, x_3) := \dfrac{F_1 (x_1, x_2, x_3)^2}{F_2 (x_1, x_2, x_3)} -2 \ \textup{ and } F_6 (x_1, x_2, x_3) := F_5 (x_1, x_2, x_3)^2 - 4.$$
Then, $F_5 (B) = \dfrac{a_1}{a_2} + \dfrac{a_2}{a_1}$, and $\displaystyle F_6 (B) = \left(\frac{a_1}{a_2} - \frac{a_2}{a_1} \right)^2$. 

If $\textup{Re}(a_1 - a_2) = 0$, then, $|a_1| \ne |a_2|$ and hence $\left(\dfrac{a_1}{a_2} - \dfrac{a_2}{a_1} \right)^2 \notin (-\infty, 0]$.  
Thus, in the same manner, as above,  we have a $\sqrt{n}$-consistent estimator of $a_2 / a_1$, and hence, we also have a $\sqrt{n}$-consistent estimator of $a_1 = (a_1 + a_2)/(1+ a_2/a_1)$. 
The condition that $|a_1| \ne |a_2|$ is equivalent with $\mu_1^2 + \sigma_1^2 \ne \mu_2^2 + \sigma_2^2$, which does not depend on $\alpha$. \\

{\it Acknowledgements} \ The authors wish to express our gratitude to an anonymous referee for his or her comments to improve the paper. 
The second author was supported by JSPS KAKENHI 19K14549 and 22K13928, and, the third author was supported by JSPS KAKENHI 16K05196 and 23K03213. 


\end{document}